\newcommand{\eps}{{\varepsilon}}
\newcommand{\R}{\mathbb{R}}
\newcommand{\N}{\mathbb{N}}
\newcommand{\cuad}{{\sqcap\kern-.68em\sqcup}}
\numberwithin{equation}{section}
\newtheorem{theorem}{Theorem}[section]
\newtheorem{proposition}[theorem]{Proposition}
\newtheorem{lemma}[theorem]{Lemma}
\newtheorem{corollary}[theorem]{Corollary}
\newtheorem{remark}[theorem]{Remark}
\newcommand{\bremark}{\begin{remark} \em}
\newcommand{\eremark}{\end{remark} }
\newcommand{\cE}{{\mathcal E}}
\newcommand{\cH}{{\mathbb H}}
\newcommand{\cL}{{\mathcal L}}
\def\sideremark#1{\ifvmode\leavevmode\fi\vadjust{\vbox to0pt{\vss
 \hbox to 0pt{\hskip\hsize\hskip1em
 \vbox{\hsize2.1cm\tiny\raggedright\pretolerance10000
  \noindent #1\hfill}\hss}\vbox to15pt{\vfil}\vss}}}%
\DeclareMathOperator{\dist}{\rm dist}
\title{Optimal boundary regularity and a Hopf-type lemma for Dirichlet problems involving the logarithmic Laplacian}
\author{Víctor Hernández-Santamaría\thanks{The work of V. Hern\'andez-Santamar\'ia is supported by the program ``Estancias Posdoctorales por México para la Formación y Consolidación de las y los Investigadores por México'' of CONAHCYT (Mexico). He also received support from Projects A1-S-17475 and A1-S-10457 of CONAHCYT and by UNAM-DGAPA-PAPIIT grants IN109522, IN104922, and IA100324 (Mexico).} \and Luis Fernando López Ríos\footnote{L.F. López Ríos is supported by CONAHCYT, México, grant CF-2023-G-122.} \and Alberto Salda\~{n}a\footnote{ A. Saldaña is supported  by 
CONAHCYT grants CBF2023-2024-116 and A1-S-10457 (Mexico) and by UNAM-DGAPA-PAPIIT grant IA100923 (Mexico).
}
}
\date{}
\begin{document}

\maketitle

\abstract{
We study the optimal boundary regularity of solutions to Dirichlet problems involving the logarithmic Laplacian. Our proofs are based on the construction of suitable barriers via the Kelvin transform and direct computations.  As applications of our results, we show a Hopf-type lemma for nonnegative weak solutions and the uniqueness of solutions to some nonlinear problems.
}

\bigskip
\bigskip

\noindent\textsc{Keywords:} Hopf lemma, Kelvin transform, uniqueness by convexity.
\medskip

\noindent\textsc{MSC2020:}
35S15 · 
35B65 ·	
35A02   
\medskip

\section{Introduction}

In this paper, we study the optimal boundary regularity of solutions of
\begin{align}\label{DP}
    L_\Delta u = f\quad \text{ in }\Omega,\qquad u=0\quad \text{ in }\R^N \setminus \Omega,
\end{align}
where $\Omega\subset \R^N$ ($N\geq 1$) is a bounded open set and $f\in L^\infty(\Omega)$.  Here $L_\Delta$ denotes the logarithmic Laplacian, that is, the pseudodifferential operator with Fourier symbol $2\ln|\xi|$.  This operator can also be seen as a first-order expansion of the fractional Laplacian (the pseudodifferential operator with symbol $|\xi|^{2s}$); in particular, for $\varphi\in C^\infty_c(\R^N)$,
\begin{align*}
(-\Delta)^s\varphi = \varphi + sL_\Delta \varphi + o(s)\qquad \text{as $s\to 0^+$ in }L^p(\R) \text{ with }1<p\leq \infty,
\end{align*}
see \cite[Theorem 1.1]{CW19}.  Moreover, it has also the following integral representation
\begin{align}\label{loglap:def}
L_{\Delta}u(x):=c_{N}\int_{B_{1}(x)}\frac{u(x)-u(y)}{|x-y|^{N}}\, dy-c_{N}\int_{\mathbb{R}^{N}\setminus B_{1}(x)}\frac{u(y)}{|x-y|^{N}}\, dy+\rho_{N}u(x),
\end{align}
where $c_N$ and $\rho_N$ are explicit constants given in~\eqref{rhon} and $u$ is a suitable function.  The logarithmic Laplacian appears in some interesting applications.  For instance, it is a tool to characterize the differentiability properties of the solution mapping of fractional Dirichlet problems, see \cite{JSW20,JSW23}.  It is also used to describe the behavior of solutions to linear and nonlinear problems involving the fractional Laplacian $(-\Delta)^s$ in the small order limit (as $s\to 0$), see \cite{FJW22,AS22,hernandez2022small}, where the operator $L_\Delta$ naturally appears.  We also mention the following very recent works: the Cauchy problem with $L_\Delta$ is studied in \cite{chen2023cauchy}; higher-order expansions of the fractional Laplacian are considered in \cite{chen2023taylor}; a characterization of the logarithmic Laplacian via a local extension problem on the $(N+1)$-dimensional upper half-space in the spirit of the Caffarelli-Silvestre extension is obtained in \cite{CHW23}; and a finite element method is analyzed and implemented in \cite{HJSS23} to approximate solutions of~\eqref{DP}.

The kernel in~\eqref{loglap:def} is sometimes called of \emph{zero-order}, because it is a limiting case for hypersingular integrals.  As a consequence, the regularizing properties of this type of operators are very weak and are a subject of current research.  Interior regularity of (bounded weak) solutions to~\eqref{DP} has been studied in \cite{CW19,CS22,feulefack2021nonlocal,KM17}; in particular, it is known that if $f\in L^\infty(\Omega)$ then $u\in C(\overline{\Omega})$.  Furthermore, the modulus of continuity of $u$ can be characterized: $u$ belongs to the space of $\alpha$-log-Hölder continuous functions in $\R^N$ for some $\alpha\in(0,1)$ (see the definition of $\mathcal L^\alpha(\R^N)$ in~\eqref{log:H:def} below); in particular,
\begin{align}\label{reg:at:bdry}
 \|u\|_{\mathcal L^\alpha(\R^N)}=\|u\|_{L^\infty(\R^N)}+\sup_{\substack{x,y\in \R^N\\x\neq y}}\frac{|u(x)-u(y)|}{\ell^\alpha(|x-y|)}<\infty,
\end{align}
where $\ell:[0,\infty)\to[0,\infty)$ is given by 
\begin{align*}
\ell(r):=\frac{1}{|\ln(\min\{r,0.1\})|},    
\end{align*}
see Figure~\ref{f} below. 

This regularity is not enough to evaluate $L_\Delta u$ pointwisely; however, if the right-hand side $f$ is 1-log-Hölder continuous in $\Omega$, then $u$ is $(1+\alpha)$-log-Hölder continuous in $\Omega$ and, with this regularity, the equation~\eqref{DP} holds pointwisely, that is, $u$ is a \emph{classical solution} (see \cite[Theorem 1.1]{CS22}). 

It is an interesting question to determine what is the optimal $\alpha$ that characterizes the regularity of a solution across the boundary of the domain $\partial \Omega$, namely, the largest $\alpha$ so that~\eqref{reg:at:bdry} holds.  In this regard, in \cite[Theorem 1.11]{CW19} it has been established that, if $\Omega$ is a Lipschitz domain with uniform exterior sphere condition, $f\in L^\infty(\Omega)$, and $u$ is a (bounded weak) solution of~\eqref{DP}, then there is $C>0$ such that
\begin{align}\label{bdr:est}
    |u(x)|\leq C\ell^{\tau}(\operatorname{dist}(x,\partial \Omega))\qquad \text{ for all }x\in \Omega \text{ and for all }\tau\in(0,\tfrac{1}{2}).
\end{align}
The proof of this estimate is done by constructing suitable barriers via direct computations and using comparison principles in small domains. Another related result on boundary regularity is given in \cite{kim2014green}, where, using probabilistic tools, sharp two-sided boundary estimates are shown for Green functions in bounded $C^{1,1}$-domains associated to a subordinate Brownian motion $X$ when the Laplace exponent of the corresponding subordinator is a Bernstein function. These results yield boundary estimates for problems that are closely related to~\eqref{DP}, for example, they apply to the logarithmic Shrödinger operator $(I-\Delta)^{\log}$, which is the pseudodifferential operator with symbol $\ln(1+|\xi|^2)$, see \cite{feulefack2023logarithmic,feulefack2023fractional} and the references therein.  For this operator, the results in \cite{kim2014green} would yield that~\eqref{bdr:est} holds with $\tau=\frac{1}{2}$.  This strongly suggests that the same should be true for the operator $L_\Delta$; however, the results in \cite{kim2014green} cannot be applied directly to $L_\Delta$ (note that the symbol $2\ln|\xi|$ is negative for $|\xi|<1$ and tends to $-\infty$ as $\xi\to 0$).

Our main results show that~\eqref{bdr:est} also holds for $\tau=\frac{1}{2}$ and that this value is optimal.  Let us introduce some notation.  As usual, $B_r(x)$ denotes the open ball in $\R^N$ of radius $r>0$ centered at $x$ and $B_r:=B_r(0)$.  We say that a bounded open set $\Omega\subset \R^N$ satisfies a \emph{(uniform) exterior sphere condition} if there is $\delta>0$ such that, for all $x_0 \in \partial \Omega$,
\begin{align}\label{eta:def}
\text{there is $y_0\in \R^N$ with $\overline{B_{\delta}(y_0)} \cap \overline{\Omega} = \{ x_0 \}$.}
\end{align}
  We use $\mathbb H(\Omega)$ to denote the Hilbert space associated to $L_\Delta$ with Dirichlet exterior conditions, see~\eqref{Hdef}. 
  
\begin{theorem}\label{convex:thm}
Let $\Omega\subset \R^N$ be a bounded open set satisfying an exterior uniform sphere condition, let $f \in L^\infty(\Omega)$, and let $u\in {\mathbb H}(\Omega)\cap L^\infty(\R^N)$ be a weak solution of~\eqref{DP}. Then $u\in C(\R^N)$ and there is $C>0$ such that
\begin{equation}\label{eq:boundary_bound}
    |u(x)|\leq C\ell^{\frac{1}{2}}(\operatorname{dist}(x,\partial \Omega))\qquad \text{ for all }x\in \Omega.
\end{equation}    
\end{theorem}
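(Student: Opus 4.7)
The proof proceeds by a barrier-and-comparison scheme.

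\textbf{Localization.} By the uniform exterior sphere condition, for every $x_{0}\in\partial\Omega$ there is $y_{0}\in\R^{N}$ with $\overline{B_{\eta}(y_{0})}\cap\overline{\Omega}=\{x_{0}\}$. For $x\in\Omega$ close to $\partial\Omega$, taking $x_{0}$ to be a nearest boundary point makes $|x-y_{0}|-\eta$ comparable to $\operatorname{dist}(x,\partial\Omega)$. Since $\eta$ is uniform in $x_{0}$ and $L_{\Delta}$ is translation invariant, it suffices to construct a single barrier $w$ (centered at $y_{0}=0$) and apply it locally near each $x_{0}$; the task reduces to proving $|u(x)|\leq C\,\ell^{1/2}(|x|-\eta)$ for $x\in\Omega$ close to the touching point, with $C$ independent of $x_{0}$.

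\textbf{Barrier via the Kelvin transform.} The core of the proof is to construct $w\colon\R^{N}\to[0,\infty)$ such that (i) $w\equiv 0$ in $\overline{B_{\eta}}$, (ii) $L_{\Delta}w$ is bounded below by a positive constant in a neighborhood of $\partial B_{\eta}$ in $\R^{N}\setminus\overline{B_{\eta}}$, and (iii) $w(x)\leq C\,\ell^{1/2}(|x|-\eta)$ for $|x|$ near $\eta$. The Kelvin involution $K(x):=\eta^{2}x/|x|^{2}$ swaps the interior and exterior of $B_{\eta}$, fixes $\partial B_{\eta}$, and satisfies $\eta-|K(x)|=\eta(|x|-\eta)/|x|$, so that distances to $\partial B_{\eta}$ are comparable on both sides. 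A natural candidate is $w(x):=\phi(K(x))$ extended by zero on $\overline{B_{\eta}}$, with $\phi(y):=\ell^{1/2}((\eta-|y|)_{+})$; property (iii) is then automatic, and property (ii) is to be obtained by direct computation based on~\eqref{loglap:def}, reducing via the change of variables $y\mapsto K(y)$ to the estimation of $L_{\Delta}\phi$ inside $B_{\eta}$, which one handles by splitting the integration according to distance from $\partial B_{\eta}$ and exploiting the radial structure of $\phi$.

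\textbf{Comparison and continuity.} Once $w$ has the three properties above, a standard barrier argument (applying the weak comparison principle for $L_{\Delta}$ on a small neighborhood of the touching point) gives $|u(x)|\leq C\,\ell^{1/2}(|x|-\eta)$ there, and patching these local estimates over $\partial\Omega$ yields~\eqref{eq:boundary_bound}. Continuity of $u$ on $\R^{N}$ then follows by combining interior continuity (already recalled in the introduction), the zero exterior condition, and the fact that the barrier estimate forces $u(x)\to 0$ as $\operatorname{dist}(x,\partial\Omega)\to 0$.

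\textbf{Principal difficulty.} The sharpness of the exponent $1/2$ is the delicate point. Since~\eqref{bdr:est} was previously known only for $\tau<1/2$, the value $1/2$ should be a genuine threshold: in the asymptotic analysis of $L_{\Delta}\phi$ near $\partial B_{\eta}$, two logarithmic contributions from~\eqref{loglap:def} (one from the singular kernel $|x-y|^{-N}$, one from the $\ell^{1/2}$ boundary decay of $\phi$) must cancel at leading order, and $1/2$ is precisely the exponent for which this balance holds. Any loss in the expansion pushes the attainable exponent strictly below $1/2$, so the computation must be carried out without margin; this is the main technical obstacle I would expect to meet.
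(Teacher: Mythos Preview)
Your overall architecture (barrier, Kelvin inversion, comparison on a small set) matches the paper, but two concrete steps fail as written. First, your barrier $w(x)=\phi(K(x))$ with $\phi(y)=\ell^{1/2}((\eta-|y|)_{+})$ tends to the positive constant $\ell^{1/2}(\eta)$ as $|x|\to\infty$, so $\int_{\R^{N}}|w(x)|(1+|x|)^{-N}\,dx=\infty$ and $w$ lies outside the class ${\mathcal V}(\Omega)$ required by the weak maximum principle for $L_{\Delta}$ (Theorem~\ref{mp:thm}); a cutoff could repair this but is not discussed. Second, the claim that the computation of $L_{\Delta}w$ ``reduces via the change of variables $y\mapsto K(y)$ to $L_{\Delta}\phi$'' is not correct: $L_{\Delta}$ does not commute with composition by the inversion, and even for the weighted Kelvin transform $|x|^{-N}\phi(K(x))$ the relation involves additional domain-dependent nonlocal terms (Proposition~\ref{K:lem}). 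You are therefore left with a direct computation of $L_{\Delta}$ applied to a function with the critical $\ell^{1/2}$ boundary profile, which you rightly identify as the principal difficulty but do not carry out.

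The paper avoids both problems by reversing the roles of barrier and solution under Kelvin. It first constructs, by explicit and lengthy calculation, a compactly supported barrier on the half-ball $B_{2}^{+}$ whose critical boundary is the \emph{flat} piece $\{x_{1}=0\}$ (Theorem~\ref{barrier:thm}); the flatness is what makes the threshold computation tractable, since it reduces to an essentially one-dimensional estimate (Theorems~\ref{thm:bdry:s} and~\ref{thm:bdry}) where the cancellation you describe can be tracked exactly. Then, given a boundary point $x_{0}$ with exterior sphere $B_{\eta}(y_{0})$, it Kelvin-transforms the \emph{solution} $u$ with respect to $S_{\eta}(y_{0})$ (Proposition~\ref{K:lem}, Lemma~\ref{weak:lem}), so that $u^{*}$ is supported in a bounded subset of the half-ball and can be compared with a fixed multiple of the flat barrier there. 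Your radial ansatz does not offer this dimensional reduction, which is precisely why the exponent $\tfrac{1}{2}$ is hard to reach along your route.
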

This result is the analogue of \cite[Lemma 2.7]{RS14} for the logarithmic Laplacian.

By studying the case of the torsion function in a (small) ball, we also show that this boundary regularity in Theorem~\ref{convex:thm} is optimal. For $A\subset \R^N$, we use $|A|$ to denote its Lebesgue measure. Let $\Psi$ denote the digamma function and let $\gamma$ be the Euler-Mascheroni constant (see the Notation section below).

\begin{theorem} \label{torsion:ball}
Let $N\geq 1,$ $r>0$ be such that $|B_r|<2^N e^{\frac{N}{2}(\Psi(\frac{N}{2})-\gamma)}|B_1|,$ and let $\tau$ be the torsion function of the ball $B_r$, namely, the unique classical solution of 
\begin{align}\label{tau:eq}
    L_\Delta \tau = 1\quad \text{ in }B_r,\qquad \tau=0\quad \text{ in }\R^N \backslash B_r.
\end{align}
Then,
\begin{align}\label{liminf}
\infty>\liminf_{t\to 0^+} \frac{\tau(x_0-t\eta(x_0))}{\ell^\frac{1}{2}(t)}>0\qquad \text{ for }x_0\in\partial B_r,
\end{align}
where $\eta$ is the outer unit normal vector field along $\partial B_r$.  Moreover, there is $c>1$ such that
\begin{align}\label{tau:bds}
c^{-1}\ell^{\frac{1}{2}}(r-|x|)\leq \tau(x)\leq c\,\ell^{\frac{1}{2}}(r-|x|)\quad \text{ for }x\in B_r.
\end{align}
\end{theorem}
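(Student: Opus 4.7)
The upper bound in \eqref{tau:bds} is an immediate consequence of Theorem \ref{convex:thm} applied with $u = \tau$, $f \equiv 1$, and $\Omega = B_r$ (a ball trivially satisfies the uniform exterior sphere condition, and $\dist(x,\partial B_r) = r - |x|$). This upper bound also supplies the finiteness half of \eqref{liminf}.

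The core of the lower bound is the explicit identity
\[
L_\Delta \mathbf{1}_{B_r}(x) = \kappa_N - \ln(r^2 - |x|^2),\qquad x \in B_r,\qquad \kappa_N := 2\ln 2 + \Psi(N/2) - \gamma.
\]
I would derive it by differentiating at $s = 0$ the classical identity $(-\Delta)^s[(r^2-|x|^2)^s_+] = \frac{2^{2s}\Gamma(1+s)\Gamma(N/2+s)}{\Gamma(N/2)}$ in $B_r$, combined with the expansion $(-\Delta)^s = I + sL_\Delta + o(s)$ recalled in the introduction; the interchange requires some care since $(r^2-|x|^2)^s_+$ degenerates to the discontinuous $\mathbf{1}_{B_r}$ as $s \to 0^+$, but can be justified via the integral representation \eqref{loglap:def} and dominated convergence. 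A short arithmetic then shows that the hypothesis $|B_r| < 2^N e^{(N/2)(\Psi(N/2)-\gamma)}|B_1|$ is equivalent to $r^2 < e^{\kappa_N}$, that is, to the pointwise positivity of $L_\Delta \mathbf{1}_{B_r}$ throughout $B_r$, with the minimum attained at the origin. This positivity is what validates the weak comparison principle on $B_r$ in the arguments that follow.

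For the lower bound in \eqref{tau:bds} and, consequently, the strict positivity in \eqref{liminf}, the plan is to exhibit an explicit barrier $\varphi \in \mathbb{H}(B_r)$, vanishing outside $B_r$, with boundary profile $\varphi(x) \sim c\,\ell^{\frac{1}{2}}(r - |x|)$ as $|x| \to r^-$, and satisfying $L_\Delta \varphi \leq 1$ on $B_r$. Motivated by the identity above, the natural candidate is a small constant multiple of $(\kappa_N - \ln(r^2-|x|^2))^{-1/2}\mathbf{1}_{B_r}(x)$, whose boundary rate is precisely $\ell^{\frac{1}{2}}(r-|x|)$; equivalently, one may construct $\varphi$ as the Kelvin transform of a suitable model barrier supported on $\R^N \setminus \overline{B_r}$, in the spirit of the proof of Theorem \ref{convex:thm}. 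Once $L_\Delta \varphi \leq 1$ is established, the comparison principle yields $\tau \geq \varphi$ on $B_r$, which delivers both the lower bound in \eqref{tau:bds} and the positive $\liminf$ in \eqref{liminf}.

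The main obstacle is the pointwise verification of $L_\Delta \varphi \leq 1$. In \eqref{loglap:def} one must simultaneously control the principal-value contribution near $x$—where the inverse square-root of the logarithmic factor interacts delicately with the $|x-y|^{-N}$ kernel—and the long-range Lévy-type term, matching both against the constant right-hand side. This fine balancing is exactly what the Kelvin-transform barriers developed for Theorem \ref{convex:thm} are designed to accomplish, and it is the step where the smallness condition on $r$ enters in an essential way.
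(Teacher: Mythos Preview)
Your upper bound and the finiteness half of \eqref{liminf} are correct and match the paper exactly.

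For the lower bound, your overall strategy (sub-barrier plus the weak maximum principle on small domains) is the same as the paper's, but the implementation you sketch has a genuine gap. The paper does \emph{not} work with a global radial barrier on $B_r$. Instead it takes the Kelvin-transformed barrier $v$ of Proposition~\ref{v:barrier:prop}, which is supported in the lens $D=B_{1/2}(\tfrac12 e_1)\cap\{x_1>\tfrac12\}$ and satisfies $L_\Delta v=f\in L^\infty(D)$ with $v\asymp \ell^{1/2}(\dist(\cdot,\partial D))$ near $e_1$. Rescaling $\widetilde v(x)=v(x/r)$ gives a barrier supported in $\widetilde D=rD\subset B_r$, touching $\partial B_r$ at $re_1$, with $L_\Delta \widetilde v\in L^\infty(\widetilde D)$ by the scaling lemma. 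Since $\tau>0$ in $B_r\supset \widetilde D$ one has $\tau-k\widetilde v\ge 0$ on $\R^N\setminus \widetilde D$, and the maximum principle on $\widetilde D$ yields $\tau\ge k\widetilde v$. This gives the lower bound \emph{only along the normal at $re_1$}; radial symmetry of $\tau$ then transports it to every $x_0\in\partial B_r$, and a continuity argument on compact subsets finishes \eqref{tau:bds}.

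Your proposed barrier $\varphi(x)=c\,(\kappa_N-\ln(r^2-|x|^2))^{-1/2}\mathbf{1}_{B_r}$ is a natural radial candidate with the correct boundary profile, but the entire difficulty is the estimate $L_\Delta\varphi\le 1$ on $B_r$, which you do not supply. Appealing to ``the Kelvin-transform barriers developed for Theorem~\ref{convex:thm}'' does not fill this gap: those barriers live on half-balls and lens-shaped domains, not on $B_r$, and the $L_\Delta$-bounds proved for them (Theorems~\ref{thm:L},~\ref{thm:log:N}, Proposition~\ref{v:barrier:prop}) do not transfer to your radial $\varphi$; a verification for $\varphi$ would require a computation of the same nature and length as Section~\ref{ini:sec}. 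Likewise, the identity $L_\Delta\mathbf 1_{B_r}=\kappa_N-\ln(r^2-|x|^2)$, while suggestive, plays no role in the paper's argument and does not by itself control $L_\Delta$ of the inverse-square-root of that expression; the smallness condition enters only through Theorem~\ref{mp:thm}, not through pointwise positivity of $L_\Delta\mathbf 1_{B_r}$.
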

We refer to \cite[Figure 1]{HJSS23} for the numerical approximation of the torsion function in different intervals. 

The proofs of Theorems~\ref{convex:thm} and~\ref{torsion:ball} are done by constructing suitable barriers and by using a comparison principle in small domains. However, in the case of the logarithmic Laplacian, this is a highly nontrivial task, because there is no easy choice for a suitable barrier; in particular, a closed formula for the torsion function in any ball is not available and one cannot argue as in \cite{RS14} (this is also the reason why~\eqref{bdr:est} is only established for $\tau\in(0,\frac{1}{2})$ in \cite{CW19}).  To overcome this difficulty, we use the following strategy.  First, we construct a barrier in an open interval $I=(0,2)$ that behaves as $\ell^\frac{1}{2}(x)$ for $x$ close to 0 (see~\eqref{u}).  Then, via sharp direct computations, we show that its logarithmic Laplacian is bounded in $I$ and that it is positive close to 0, see Theorem~\ref{thm:L}. Afterwards, we use this barrier to construct higher-dimensional barriers in half balls (see Figure~\ref{fig1} and~\eqref{Vtilde}).  Using the calculations in the one-dimensional case and a Leibniz-type formula (see Section~\ref{Leib:sec}), we show that this new function also has a bounded logarithmic Laplacian that is positive close to the origin (see Theorem~\ref{thm:log:N}).

 \begin{figure}[ht]
     \centering
 \includegraphics[width=.32\paperwidth]{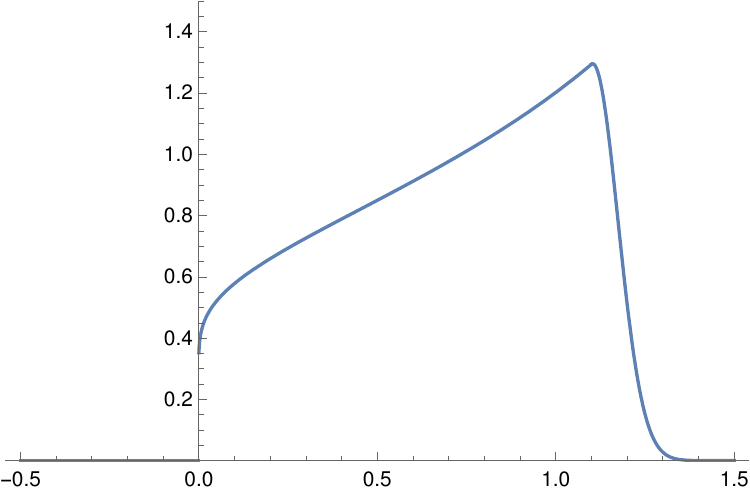} 
 \includegraphics[width=.42\paperwidth]{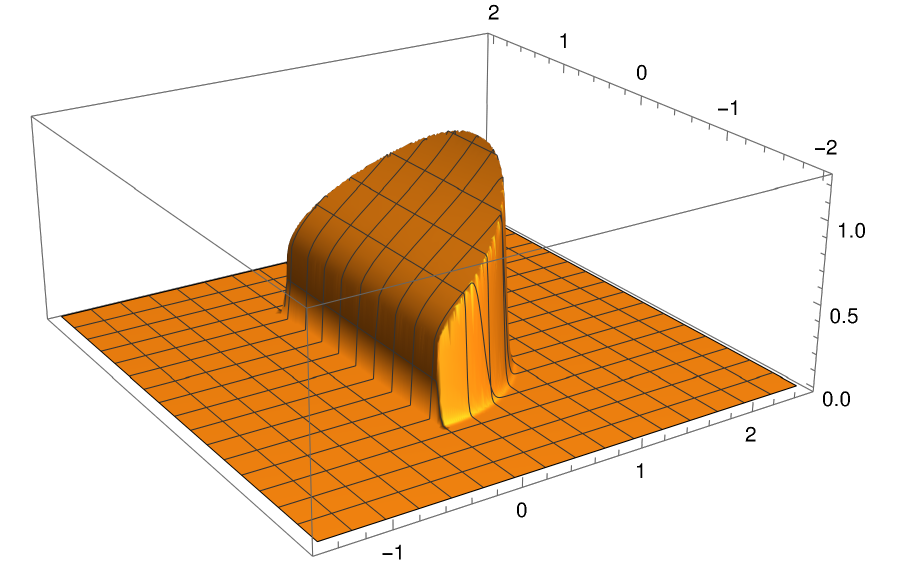} 
     \caption{\small The shape of the initial barriers for $N=1$ and $N=2$.}
     \label{fig1}
     \end{figure}
     
Finally, to obtain a suitable barrier to characterize domains satisfying the (uniform) exterior sphere condition, we use the following Kelvin transform formula, which is of independent interest. Let $x,x_0\in \R^N$ and $R>0$, the inversion of $x$ with respect to the sphere $S_{R}(x_0):=\partial B_R(x_0)$ is given by
\begin{equation*}
    x^* = x_0 + R^2\frac{x-x_0}{|x-x_0|^2}, \quad x \neq x_0,
\end{equation*}
and the Kelvin transform of a function $u:\R^N\to \R$ with respect to the sphere $S_{R}(x_0)$ is
\begin{equation*}
    u^*(x^*) = |x^*-x_0|^{-N} u(x),\qquad x^* \neq x_0.
\end{equation*}
For $\Omega \subset \R^N$ we define $\Omega^* = \{ x^*: x\in \Omega \}$. 

\begin{proposition}\label{K:lem}
Let $x_0\in \R^N$, $R>0$, and $x^*$ denote the inversion of $x$ with respect to the sphere $S_{R}(x_0)$. 
 Let $\Omega$ be an open bounded Lipschitz set and $u: \R^N \to \R$ be a measurable function with $u \equiv 0$ on $\R^N \setminus \Omega$. 
\begin{enumerate}[i)]
\item If $u$ is Dini continuous in $x \neq x_0$, then $u^*$ is Dini continuous in $x^*$ and
\begin{equation} \label{eq:L_delta1}
L_\Delta u^*(x^*)= |x^*|^{-N} L_\Delta u(x) + c_N |x^*|^{-2N} u(x) \int_{\Omega}\frac{|z|^{-N}-|x|^{-N}}{|z-x|^N}\, dz  + (h_{\Omega^*}(x^*)-h_\Omega(x))|x^*|^{-N}u(x),
\end{equation}
where $h_\Omega$ is given in~\eqref{eq:h_omega}.   
\item If $u, L_\Delta u \in L^\infty(\Omega)$ and $B_R(x_0) \subset \Omega^c$, then $u^*, L_\Delta u^* \in L^\infty(\Omega^*)$.
\item If $B_R(x_0) \subset \Omega^c$ and $u \in \mathbb{H}(\Omega)$, then $u^* \in \mathbb{H}(\Omega^*)$.
\end{enumerate}
\end{proposition}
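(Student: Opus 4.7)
The plan is to establish the pointwise identity in (i) via a careful Kelvin change of variables, and then to deduce (ii) and (iii) by bounding the individual terms. A convenient preliminary step is to rewrite \eqref{loglap:def} in the following ``$\Omega$-normalized'' form, valid whenever $u\equiv 0$ outside $\Omega$:
\begin{equation*}
L_\Delta u(x) \;=\; c_N \int_\Omega \frac{u(x)-u(z)}{|x-z|^N}\,dz \;+\; h_\Omega(x)\,u(x),
\end{equation*}
obtained by splitting each of the two integrals of \eqref{loglap:def} between $\Omega$ and its complement and letting $h_\Omega$ absorb the contributions of $B_1(x)\setminus\Omega$, $\Omega\setminus B_1(x)$, and $\rho_N$. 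This representation is the natural one to transport under the inversion, since the awkward unit-ball split is no longer explicit in the integral term.

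By translation invariance of $L_\Delta$, I may assume $x_0=0$. Applying the $\Omega$-normalized identity to $u^*$ at $x^*$ and performing the substitution $y=z^*$ in the integral over $\Omega^*$, I use the standard Kelvin identities $|x^*-z^*|=R^2|x-z|/(|x|\,|z|)$, $u^*(z^*)=|z^*|^{-N}u(z)=R^{-2N}|z|^N u(z)$, and $dy=R^{2N}|z|^{-2N}\,dz$ to rewrite the integrand as a multiple of $|x|^N u(x)-|z|^N u(z)$. The algebraic decomposition
\begin{equation*}
|x|^N u(x)-|z|^N u(z) \;=\; |z|^N\bigl(u(x)-u(z)\bigr) \;+\; u(x)\bigl(|x|^N-|z|^N\bigr),
\end{equation*}
together with $(|x|^N-|z|^N)/|z|^N=|x|^N(|z|^{-N}-|x|^{-N})$, splits the transformed integral into two pieces. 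The first piece becomes $|x^*|^{-N}\int_\Omega (u(x)-u(z))/|x-z|^N\,dz$, which by the $\Omega$-normalized identity equals $|x^*|^{-N}(L_\Delta u(x)-h_\Omega(x)u(x))$; the second piece is the explicit correction integral involving $|z|^{-N}-|x|^{-N}$. Adding $h_{\Omega^*}(x^*)u^*(x^*)=h_{\Omega^*}(x^*)|x^*|^{-N}u(x)$ produces the difference $h_{\Omega^*}(x^*)-h_\Omega(x)$ automatically, giving \eqref{eq:L_delta1}. The Dini continuity of $u^*$ at $x^*$ follows because the Kelvin map is real-analytic away from $x_0$, so a Dini modulus of $u$ at $x$ transfers to one for $u^*$ at $x^*$ up to a constant depending on $\dist(x,x_0)$.

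For (ii), the assumption $B_R(x_0)\subset\Omega^c$ confines $\Omega^*$ to $\overline{B_R(x_0)}\setminus\{x_0\}$, so $|x^*-x_0|^{\pm N}$ are uniformly bounded above and below on $\Omega^*$. The correction integral is finite because the map $z\mapsto|z-x_0|^{-N}$ is smooth on the compact set $\overline\Omega\subset\{|z-x_0|\ge R\}$, which means $(|z-x_0|^{-N}-|x-x_0|^{-N})/|x-z|^N$ has only an $O(|x-z|^{-N+1})$ singularity at $z=x$; and $h_\Omega$, $h_{\Omega^*}$ are bounded for Lipschitz sets. Inserting these bounds in (i) gives $L_\Delta u^*\in L^\infty(\Omega^*)$. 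For (iii), I approximate $u\in\mathbb H(\Omega)$ by smooth compactly supported functions and apply the same Kelvin substitution inside the quadratic form $\mathcal E$ associated to $L_\Delta$; since $\supp u^*\subset\Omega^*$ is separated from $x_0$, the change of variables produces only bounded multiplicative factors, yielding $\|u^*\|_{\mathbb H(\Omega^*)}\le C\|u\|_{\mathbb H(\Omega)}$ and hence $u^*\in\mathbb H(\Omega^*)$ by density.

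The hardest step is the change of variables in (i): the unit-ball split of \eqref{loglap:def} does not commute with the Kelvin inversion (the image of $B_1(x)$ is a ball with a different center and radius, generally not $B_1(x^*)$), so a direct substitution on the original form would leave an unmanageable logarithmic remainder. Passing to the $\Omega$-normalized identity offloads this asymmetry entirely onto the boundary terms $h_\Omega$, $h_{\Omega^*}$, which is exactly why these appear in the final formula only through the combination $h_{\Omega^*}(x^*)-h_\Omega(x)$; the additional correction integral is the purely algebraic trace of the fact that $u^*(x^*)-u^*(y)$ is not a scalar multiple of $u(x)-u(z)$.
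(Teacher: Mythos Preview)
Your treatment of (i) matches the paper's approach (the $\Omega$-normalized representation is $L_\Delta u(x)=c_N\int_\Omega\frac{u(x)-u(z)}{|x-z|^N}\,dz+(h_\Omega(x)+\rho_N)u(x)$ with the paper's $h_\Omega$ from \eqref{eq:h_omega}; the extra $\rho_N$ you folded into $h_\Omega$ is not part of that definition, but it cancels in the difference anyway). The arguments for (ii) and (iii), however, have genuine gaps.

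In (ii), the claim that ``$h_\Omega$, $h_{\Omega^*}$ are bounded for Lipschitz sets'' is false: $h_\Omega(x)$ diverges like $-c\ln(\dist(x,\partial\Omega))$ as $x\to\partial\Omega$, since $B_1(x)\setminus\Omega$ contains essentially a half-ball around $x$ and the integrand is $|x-y|^{-N}$. What is bounded is the \emph{difference} $h_{\Omega^*}(x^*)-h_\Omega(x)$, and establishing this is the actual content of (ii). The paper devotes a separate lemma (Lemma~\ref{h:aux:lem}) to it: one compares $B_\sigma(x^*)\setminus\Omega^*$ with the Kelvin image of $B_{1/2}(x)\setminus\Omega$ (a ball of different center and radius containing $B_\sigma(x^*)$) and controls the discrepancy via a Taylor expansion of $\bigl||z|^N-|x^*|^N\bigr|/|z-x^*|^N$. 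This cancellation of the two logarithmic singularities is precisely the missing idea. In (iii), the same blow-up reappears. The $\mathbb H$-seminorm of $u^*$ splits into a diagonal block over $\Omega^*\times\Omega^*$, which does transform with bounded Jacobian factors as you say, and a cross block over $\Omega^*\times(\R^N\setminus\Omega^*)$, which after the change of variables produces a term of the form $\int_\Omega u(z)^2\kappa_\Omega(z)\,dz$ with $\kappa_\Omega(z)=c_N\int_{B_1(z)\setminus\Omega}|z-y|^{-N}\,dy$. This killing measure is unbounded near $\partial\Omega$, so the term is not controlled by $\|u\|_{L^2}^2$ and your density argument cannot close with a constant independent of the support of the approximant. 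The paper handles this by invoking the logarithmic boundary Hardy inequality from \cite{CW19}, which gives $\int_\Omega u^2\kappa_\Omega\le C\|u\|_{\mathbb H(\Omega)}^2$; without this ingredient the uniform bound $\|u^*\|_{\mathbb H(\Omega^*)}\le C\|u\|_{\mathbb H(\Omega)}$ is not available.
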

Note that formula~\eqref{eq:L_delta1} is different from the one obtained for the fractional Laplacian (see, for example, \cite{BZ06} or \cite[Proposition 2]{ADFJS19}).   For the logarithmic Laplacian, the geometry of the domain has a stronger influence in the formula, and this is represented in the last two summands in~\eqref{eq:L_delta1}.

Using the initial barrier and the Kelvin transform, we obtain a new function which has a bounded logarithmic Laplacian and which has the optimal regularity at the curved part of the boundary (Proposition~\ref{v:barrier:prop}), see Figure~\ref{fig:kelvin}. 

\begin{figure}[ht]
    \centering
    \includegraphics[width=5cm]{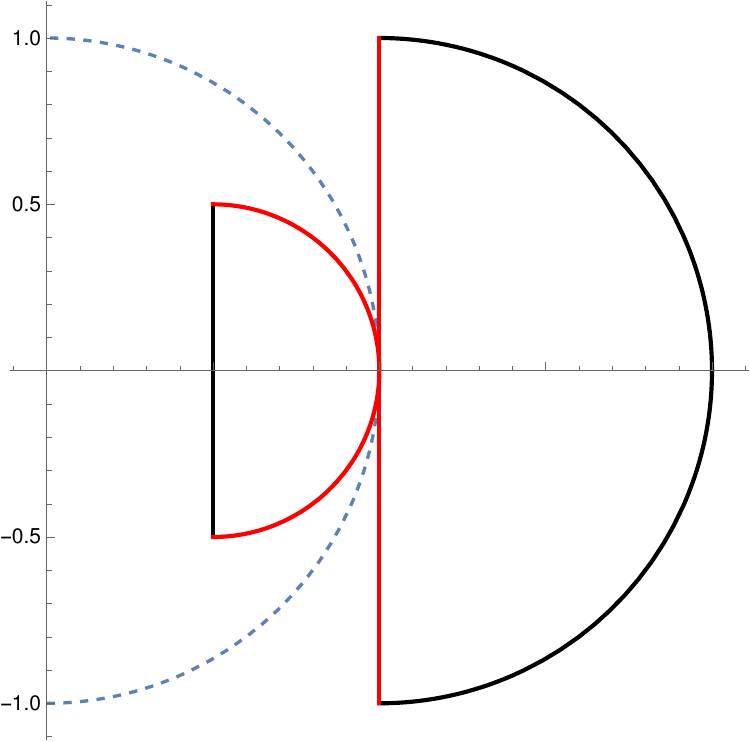}
    \includegraphics[width=8cm]{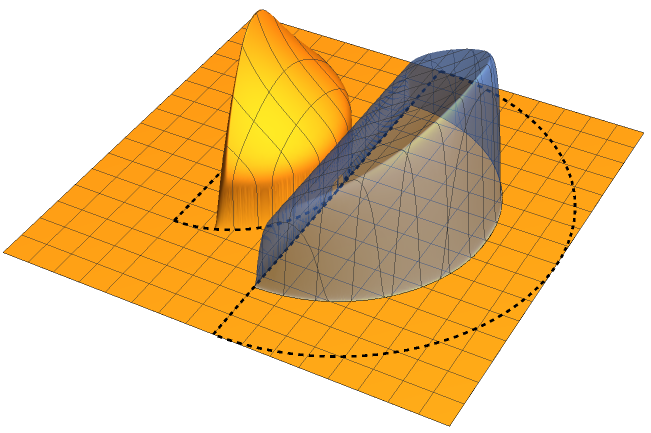}
    \captionsetup{width=0.8\textwidth}
    \caption{\small The Kelvin transform can be used to obtain a new barrier. In the picture, the dotted line represents the boundary of the unitary circle.  The red line segment (representing a subset of a hyperplane) is sent with the Kelvin transform to the red half-circle (representing a halfsphere).}
    \label{fig:kelvin}
\end{figure}

With this barrier one has the main ingredient to characterize the optimal boundary regularity. The other important ingredient is the comparison principle.  Although the operator $L_\Delta$ does not satisfy the maximum principle in general, in \cite[Corollary 1.9]{CW19} (see Theorem~\ref{mp:thm} in Section~\ref{ob:sec}) it is shown that it holds in sufficiently small domains. With this and with suitable scalings (see Section~\ref{scal:sec}) one can adapt the method of barrier functions.

As a byproduct of our approach, we can show the following Hopf-type lemma for the logarithmic Laplacian (following the ideas in \cite{JF}). Let  $\cE_L$ be the bilinear form associated to $L_\Delta$ (see~\eqref{Hdef}) and let 
\begin{align}\label{Vspace:def}
{\mathcal V}(\Omega):=\left\{ u\in L^1_{loc}(\R^N)\::\:
\int_{\R^N}\frac{|u(x)|}{(1+|x|)^{N}}\, dx+\int\int_{\substack{{x,y\in\Omega}\\{|x-y|<1}}}\frac{|u(x)-u(y)|^2}{|x-y|^N}\ dx\, dy+\int_\Omega u^2\, dx<\infty
\right\},
\end{align}
which is introduced in \cite{CW19} for the validity of a maximum principle, see Theorem~\ref{mp:thm} below.  

We say that an open set $\Omega\subset \R^N$ satisfies an \emph{interior sphere condition} at $x_0\in\partial \Omega$ if there is $\delta>0$ and $y_0\in \R^N$ with $B_{\delta}(y_0)\subset \Omega$ and $S_{\delta}(y_0) \cap \partial \Omega = \{ x_0 \}$. 

\begin{theorem}[A Hopf-type lemma for the logarithmic Laplacian]\label{strong:mp}
Let $\Omega\subset \R^N$ be an open set and assume that $\Omega$ satisfies the interior sphere condition at $x_0\in \partial \Omega$.  If $v\in{C}(\mathbb{R}^{N})\cap {\mathcal V}(\Omega)$ is a nontrivial nonnegative function such that $v(x_0)=0$ and 
\begin{align*}
\cE_L(v, \varphi) \geq 0\qquad \text{ for all $\varphi \in C^\infty_c(\Omega)$ with $\varphi \geq 0$ in $\Omega$},
\end{align*}
then
\begin{align}\label{hopf:ineq}
\liminf_{t\to 0^+} \frac{v(x_0-t\eta(x_0))}{\ell^\frac{1}{2}(t)}>0,
\end{align}
where $\eta$ is an outer unit normal vector field along $\partial \Omega$.
\end{theorem}

As a further application of our regularity results and of Theorem~\ref{strong:mp}, we also show the \emph{uniqueness of positive solutions} for the nonlinear problem 
\begin{align}\label{sub:intro}
L_{\Delta}v=-\mu\ln\left(|v|\right)v\quad\mbox{in}~\Omega,\qquad v=0\quad \text{ in }\R^N\backslash \Omega,
\end{align}
where $\mu>0$ and $\Omega$ is a bounded open set of class $C^2$, see Theorem~\ref{exisleast:intro} in Section~\ref{sublin:sec}.  The proof of this result relies on a convexity-by-paths argument, following the approach in \cite{BFMST18}. Problem~\eqref{sub:intro} appears naturally in the small-order limit of solutions to the fractional Lane-Emden equation; to be more precise, if $u_s$ is a positive solution of 
\begin{align*}
(-\Delta)^s u_s = u_s^{p_s-1}\quad \text{ in }\Omega,\qquad u_s=0\quad \text{ in }\R^N\backslash \Omega,
\end{align*}
where $p_s\in(1,2)$ is such that $\mu=\lim_{s\to 0^+}\frac{2-p_s}{s}$, then $u_s\to v$ in $L^q(\Omega)$ for $1\leq q<\infty$ as $s\to 0^+$, where $v$ is a positive solution of \eqref{sub:intro}. See \cite{AS22} for more details.  Note that these problems are \emph{sublinear}. In the superlinear case ($p_s>2$) uniqueness is known to be false in general (see \cite{davila2017bubbling,DIS23,DIS24,FW24,FW242,DP24} for uniqueness and multiplicity results for positive solutions of fractional problems). Equation~\eqref{sub:intro} with $\mu<0$ (which would be the superlinear case in the logarithmic setting) has been studied in \cite{hernandez2022small}, but nothing is known yet about the uniqueness or multiplicity of positive solutions and our techniques cannot be used in this setting. 

To close this introduction, we mention that we believe a similar strategy can also be used to study the sharp boundary behavior of more general integral operators associated with zero-order kernels, as those considered in \cite{CS22,CD18,feulefack2021nonlocal,feulefack2023fractional}, for instance.

\medskip

The paper is organized as follows. In Section~\ref{ini:sec} we construct the initial barriers described in Figure~\ref{fig1}. Section~\ref{sec:kelvin} contains the proof of Proposition~\ref{K:lem} regarding the Kelvin transform and the construction of the barrier portrayed in Figure~\ref{fig:kelvin}. Theorem~\ref{convex:thm} is shown in Section~\ref{ob:sec}, whereas
the proofs of Theorem~\ref{torsion:ball} (optimal regularity for the torsion problem), Theorem~\ref{strong:mp} (Hopf-type lemma for the logarithmic Laplacian), and the uniqueness of positive solutions of~\eqref{sub:intro} (Theorem~\ref{u:thm}) are contained in Section~\ref{app:sec}.  Finally, we include an Appendix with some useful results regarding scalings and a Leibniz-type formula for the logarithmic Laplacian of a product. 

\subsection*{Notation}

We use $B_r(x)$ to denote the open ball of radius $r>0$ centered at $x\in \R^N$. If $x=0$, then we simply write $B_r:=B_r(0).$  We also set $S_r(x)=\partial B_r(x)$, $S_r:=S_r(0),$ and 
\begin{align*}
    \sigma_N:=|S_1|=\frac{2\pi^\frac{N}{2}}{\Gamma(\frac{N}{2})}
\end{align*}
to denote the surface measure of the unit sphere.  For $U\subset \R^N,$ we use $U^c:=\R^N\backslash U.$

The constants involved in the definition of $L_\Delta$ are given by
\begin{align}\label{rhon}
c_{N}:=\pi^{-\tfrac{N}{2}}\Gamma(\tfrac{N}{2}),\qquad \rho_{N}:=2\ln 2+\Psi(\tfrac{N}{2})-\gamma,\quad \text{ and }\quad\gamma:=-\Gamma^{\prime}(1).
\end{align}
Here $\gamma$ is also known as the Euler-Mascheroni constant and $\Psi:=\frac{\Gamma^{\prime}}{\Gamma}$ is the digamma function.

Next, following \cite{CW19}, we introduce the variational framework for $L_\Delta$. Let $\Omega \subset \mathbb{R}^N$ be an open bounded set and let $\mathbb{H}(\Omega)$ be the Hilbert space given by
\begin{align}\label{Hdef}
\mathbb{H}(\Omega):=\left\lbrace u\in L^{2}(\mathbb{R}^{N})~:~\int\int_{\substack{x,y\in\mathbb{R}^{N}\\ |x-y|\leq 1}}\frac{|u(x)-u(y)|^{2}}{|x-y|^{N}}\, dx\, dy<\infty~\mbox{and}~u=0~\mbox{in}~\mathbb{R}^{N}\setminus\Omega\right\rbrace
\end{align}
with the inner product 
\begin{align*}
\mathcal{E}(u,v):=\frac{c_{N}}{2}\int_{\R^N}\int_{B_1(x)}\frac{(u(x)-u(y))(v(x)-v(y))}{|x-y|^{N}}\, dy\, dx,
\end{align*}
and the norm
\begin{align}\label{norm:def}
\|u\|:=\left(\mathcal{E}(u,u)\right)^{\tfrac{1}{2}}.    
\end{align}
The operator $L_{\Delta}$ has the following associated quadratic form 
\begin{align}\label{eq:bilog}
\mathcal{E}_{L}(u,v)
&:=\mathcal{E}(u,v)-c_{N}\int\int_{\substack{x,y\in\mathbb{R}^{N}\\ |x-y|\geq 1}}\frac{u(x)v(y)}{|x-y|^{N}}\, dx\, dy+\rho_{N}\int_{\mathbb{R}^{N}}uv\, dx.
\end{align}
Furthermore, for $u\in\mathbb H(\Omega)$,
\begin{align*}
\mathcal{E}_{L}(u,u)=\frac{c_{N}}{2}\int_\Omega\int_\Omega \frac{(u(x)-u(y))^2}{|x-y|^{N}}\, dx\, dy
+\int_\Omega (h_\Omega(x)+\rho_N)u(x)^2\, dx,
\end{align*}
where 
\begin{equation}
\label{eq:h_omega}
h_\Omega(x):=c_N\left(\int_{B_1(x)\backslash \Omega}|x-y|^{-N}\, dy-\int_{ \Omega\backslash B_1(x)}|x-y|^{-N}\, dy\right),  
\end{equation}
see \cite[Proposition 3.2]{CW19}.  

For $f\in L^2(\Omega)$, we say that $u\in \mathbb H(\Omega)$ is a weak solution of $L_\Delta u = f$ in $\Omega$, $u=0$ in $\R^N\backslash \Omega$, if 
\begin{align*}
    \cE_L(u,\varphi) = \int_\Omega f \varphi\, dx\qquad \text{ for all }\varphi \in C^\infty_c(\Omega).
\end{align*}
By \cite[Theorem 1.1]{CW19}, it holds that 
\begin{align*}
\mathcal{E}_{L}(u,u)=\int_{\mathbb{R}^{N}}\ln(|\xi|^2)|\hat{u}(\xi)|^{2}\, d\xi\qquad\mbox{for all}~u\in{C}_{c}^{\infty}(\Omega),
\end{align*}
where $\hat u$ is the Fourier transform of $u$. Moreover, for $\varphi\in{C}_{c}^{\infty}(\Omega),$ we have that $L_{\Delta}\varphi\in L^{p}(\mathbb{R}^{N})$ and 
\begin{align}\label{eq:725}
\mathcal{E}_{L}(u,\varphi)=\int_{\Omega}uL_{\Delta}\varphi \, dx
 \qquad \text{ for $u\in\mathbb{H}(\Omega)$,}
\end{align}
see \cite[Theorem 1.1]{CW19}. 

Let $v: \Omega \rightarrow \mathbb{R}$ be a measurable function. The modulus of continuity of $v$ at a point $x \in \Omega$ is defined by
$$
\omega_{v, x, \Omega}:(0, \infty) \rightarrow[0, \infty), \quad \omega_{v, x, \Omega}(r)=\sup _{\substack{y \in \Omega \\|y-x| \leq r}}|v(y)-v(x)| .
$$
The function $v$ is called \emph{Dini continuous} at $x$ if $\int_0^1 \frac{\omega_{v, x, \Omega}(r)}{r} d r<\infty$. If
$$
\int_0^1 \frac{\omega_{v, \Omega}(r)}{r} d r<\infty \quad \text { for the uniform continuity modulus } \omega_{v, \Omega}(r):=\sup _{x \in \Omega} \omega_{v, x, \Omega}(r),
$$
then we call $v$ \emph{uniformly Dini continuous} in $\Omega$.

Let $\ell:[0,\infty)\to[0,\infty)$ be given by
\begin{align*}
\ell(r)=-\frac{1}{\ln(\min\{0.1,r\})}.
\end{align*}
\setlength{\unitlength}{1cm}
\begin{figure}[ht] 
\begin{center}
\begin{picture}(5,3.5)
\put(0,0){\includegraphics[width=5cm]{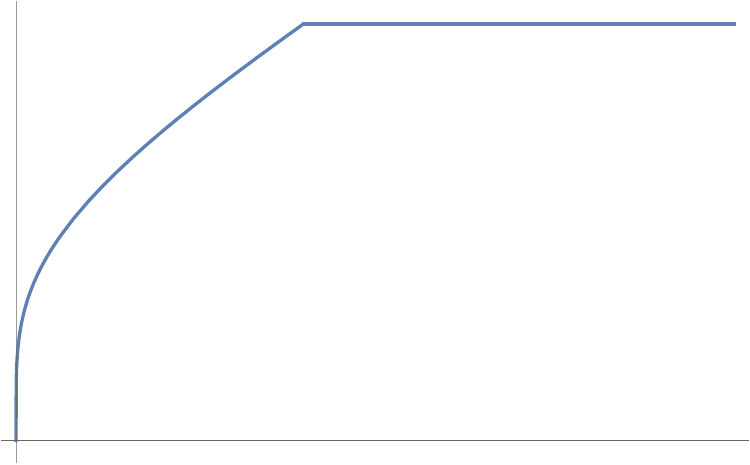}}
\put(-0.6,3){$\ell(r)$}
\put(5.2,0){$r$}
\put(1.8,0.6){$\frac{1}{10}$}
\put(1.95,0.1){$|$}
\end{picture}
\end{center}
 \caption{The function $\ell$.}\label{f}
\end{figure}

For $\alpha>0$, we also define the $\alpha$-log-Hölder Banach space (see \cite[Lemma 7.1]{CS22}) by 
\begin{align}\label{log:H:def}
\mathcal L^\alpha(\Omega):=\{u:\Omega\to\R\::\: \|u\|_{\mathcal L^\alpha(\Omega)}<\infty\},
\end{align}
where 
\begin{align*}
\|u\|_{\mathcal L^\alpha(\Omega)} := \|u\|_{L^\infty(\Omega)} + [u]_{\mathcal L^{\alpha}(\Omega)},\qquad 
[u]_{\mathcal L^\alpha(\Omega)} := \sup_{\substack{x,y\in \Omega\\x\neq y}}\frac{|u(x)-u(y)|}{\ell^\alpha(|x-y|)},
\end{align*}
and $\|\cdot\|_{L^\infty(\Omega)}$ is the usual norm in $L^\infty(\Omega)$. We also set $\|u\|_\infty:=\|u\|_{L^\infty(\R^N)}$.

 We shall use the following semi-homogeneity property for the modulus of continuity $\ell$.  
\begin{lemma}[Lemma 3.2 in \cite{CS22}]\label{prop1}
There is $c>0$ such that 
\begin{align*}
\ell(\lambda r)\geq c\, \ell(\lambda)\ell(r)\,\qquad \text{ for all }r,\lambda >0.    
\end{align*}
\end{lemma}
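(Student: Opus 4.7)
The plan is to reduce the inequality to a more tractable form by working with the reciprocal $f(r) := 1/\ell(r) = \max\{\ln 10,\,-\ln r\}$. The claim $\ell(\lambda r) \geq c\,\ell(\lambda)\ell(r)$ is equivalent to an upper bound $f(\lambda r) \leq C\, f(\lambda)\,f(r)$ with $C = 1/c$, and $f$ is a much more pleasant object: it is continuous, piecewise elementary, and bounded below by the strictly positive constant $\ln 10$ (since $\min\{0.1,r\} \leq 0.1$ forces $|\ln \min\{0.1,r\}| \geq \ln 10$).

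The key observation I would establish first is the sub-additive bound
\begin{equation*}
f(\lambda r) \;\leq\; f(\lambda) + f(r).
\end{equation*}
This follows from the pointwise inequality $f(s) \geq -\ln s$, valid for every $s>0$ (an equality when $s\leq 0.1$, and when $s>0.1$ the right-hand side may even be negative while the left-hand side is $\ln 10>0$). Adding the inequality for $s=\lambda$ to the one for $s=r$ gives $f(\lambda)+f(r) \geq -\ln(\lambda r)$; combining this with the trivial bound $f(\lambda)+f(r) \geq \ln 10$ and using the identity $f(\lambda r)=\max\{\ln 10,-\ln(\lambda r)\}$ yields the desired sub-additivity.

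The second step is to convert sub-additivity into sub-multiplicativity using the lower bound $f\geq \ln 10$. Since $f(r)\geq \ln 10$, we have $f(\lambda)\leq f(\lambda)f(r)/\ln 10$, and symmetrically $f(r)\leq f(\lambda)f(r)/\ln 10$; summing these two estimates gives $f(\lambda)+f(r)\leq \frac{2}{\ln 10}f(\lambda)f(r)$. Chained with the previous step this produces $f(\lambda r) \leq \frac{2}{\ln 10}f(\lambda)f(r)$, which rewritten in terms of $\ell$ is exactly $\ell(\lambda r) \geq \frac{\ln 10}{2}\,\ell(\lambda)\ell(r)$, so we can take $c=(\ln 10)/2$.

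There is no real obstacle here; the whole argument is essentially a three-line computation once one makes the change of viewpoint $\ell\mapsto 1/\ell$. The only thing to be slightly careful about is the region $\lambda\geq 1$ or $r\geq 1$ where $-\ln$ becomes negative, but this is handled automatically by the use of the $\max$ in the definition of $f$ and the crude lower bound $f\geq \ln 10$.
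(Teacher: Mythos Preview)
Your proof is correct. The sub-additivity step $f(\lambda r)\leq f(\lambda)+f(r)$ follows exactly as you say, and the conversion to sub-multiplicativity via the uniform lower bound $f\geq\ln 10$ is clean; the resulting constant $c=(\ln 10)/2$ is even explicit.

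There is nothing to compare against in this paper: the lemma is quoted verbatim from \cite{CS22} and no proof is given here. Your argument is a perfectly acceptable self-contained justification.
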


\section{An initial barrier}\label{ini:sec}

The main goal of this section is to show the following result. Let $N \geq 1$ and let 
\begin{align}\label{Omega:def}
    B_2^+:=B_2\cap \R^N_+, \qquad  \text{ where }\R^N_+:=\{x\in\R^N\::\: x_1>0\}.
\end{align}

\begin{theorem}\label{barrier:thm}
There is a function $w\in \cL^{\frac{1}{2}}(\R^N)\cap C^\infty(\R^N\backslash \R^N_+)\cap {\mathbb H}(B_2^+)$ such that $w=0$ in $\R^N\backslash B_2^+$ and
\begin{align*}
L_\Delta w\in L^\infty(B_2^+).
\end{align*}
Moreover, there is $c>0$ and $\delta>0$ such that
\begin{align*}
c\,\ell^{\frac{1}{2}}(x_1) <w(x)<c^{-1}\ell^{\frac{1}{2}}(x_1)
\quad \text{ and }\quad 
L_\Delta w(x)>c\qquad \text{ for all }x\in B_2^+\cap B_\delta.
\end{align*}
\end{theorem}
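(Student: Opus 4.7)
The plan is to lift the one-dimensional barrier supplied by Theorem~\ref{thm:L} to $N$ dimensions by multiplying by a smooth radial cutoff that confines the support to $B_2^+$, and then to analyze $L_\Delta$ of the product via the Leibniz-type formula from Section~\ref{Leib:sec}. Writing $x=(x_1,x')\in\R\times\R^{N-1}$, let $u\colon\R\to\R$ denote the 1D barrier of Theorem~\ref{thm:L}: it is supported in $[0,2]$, satisfies $u(x_1)\asymp\ell^{1/2}(x_1)$ as $x_1\to 0^+$, $L_\Delta u\in L^\infty((0,2))$, and $L_\Delta u(x_1)\geq c_0>0$ on a right neighborhood of $0$. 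Fix a cutoff $\phi\in C^\infty_c([0,2))$ with $\phi\equiv 1$ on $[0,3/2]$ and set
\begin{equation*}
w(x):=u(x_1)\,\phi(|x|).
\end{equation*}
Since $u$ vanishes outside $(0,2)$ and $\phi(|\cdot|)$ outside $B_2$, $w$ is supported in $\overline{B_2^+}$ and equals zero on $\R^N\setminus\R^N_+$, where it is smooth. The product rule for log-H\"older seminorms (using $u\in\cL^{\frac{1}{2}}(\R)$ and $\phi(|\cdot|)\in C^\infty_c(\R^N)$) yields $w\in\cL^{\frac{1}{2}}(\R^N)$, and $w\in\mathbb H(B_2^+)$ follows from the bilinear estimates below.

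Applying the Leibniz-type identity from Section~\ref{Leib:sec} to the factors $f(x)=u(x_1)$ and $g(x)=\phi(|x|)$ produces a decomposition of the form
\begin{equation*}
L_\Delta w(x)=\phi(|x|)\,L_\Delta u(x_1)+u(x_1)\,L_\Delta\phi(|x|)+R(x),
\end{equation*}
where $R$ contains a bilinear singular integral
\begin{equation*}
-c_N\int_{B_1(x)}\frac{(u(x_1)-u(y_1))\,(\phi(|x|)-\phi(|y|))}{|x-y|^N}\,dy
\end{equation*}
plus (depending on the precise form of the formula) a bounded nonlocal correction. The first summand is in $L^\infty(B_2^+)$ by Theorem~\ref{thm:L} together with $\phi\in L^\infty$; the second is bounded because $\phi(|\cdot|)\in C^\infty_c(\R^N)$ forces $L_\Delta\phi(|\cdot|)\in L^\infty(\R^N)$ while $u\in L^\infty$. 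For the singular piece of $R$, I would split $B_1(x)$ into $B_\rho(x)$ and its complement: on $B_\rho(x)$ I combine $|u(x_1)-u(y_1)|\leq [u]_{\cL^{\frac{1}{2}}}\ell^{\frac{1}{2}}(|x-y|)$ with $|\phi(|x|)-\phi(|y|)|\leq\|\nabla\phi\|_\infty|x-y|$ to obtain the integrable kernel $|x-y|^{-(N-1)}\ell^{\frac{1}{2}}(|x-y|)$; on the complement the trivial $L^\infty$ bounds on $u$ and $\phi(|\cdot|)$ suffice. These estimates are uniform in $x$, so $R\in L^\infty(\R^N)$ and hence $L_\Delta w\in L^\infty(B_2^+)$.

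For the behavior near the origin, I choose $\delta\in(0,1/2)$ so small that $B_1(x)\subset B_{3/2}$ whenever $x\in B_\delta$ (hence $\phi(|x|)=\phi(|y|)=1$ on the integration domain) and so that Theorem~\ref{thm:L} yields $L_\Delta u(x_1)\geq c_0$ for $x_1\in(0,\delta)$. On $B_2^+\cap B_\delta$ the decomposition reduces to $L_\Delta w(x)=L_\Delta u(x_1)+u(x_1)L_\Delta\phi(|x|)+R(x)$, where the bilinear singular part of $R$ vanishes identically (both $\phi$-differences are zero on $B_1(x)$) and the remaining correction, like the second summand, is $O(u(x_1))=o(1)$ as $x_1\to 0^+$. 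Shrinking $\delta$ if necessary we conclude $L_\Delta w(x)\geq c_0/2>0$ on $B_2^+\cap B_\delta$. Since also $w(x)=u(x_1)$ there and $\mathrm{dist}(x,\partial B_2^+)\asymp x_1$ for $x$ near the origin in $B_2^+$, the two-sided bound $w(x)\asymp\ell^{\frac{1}{2}}(\mathrm{dist}(x,\partial B_2^+))$ is inherited from the 1D estimate. The main obstacle I anticipate is the uniform bound on the bilinear remainder $R$ across the layer $\{x_1\approx 0\}$: the 1D factor $u$ fails to be smooth there, so the $\cL^{\frac{1}{2}}$ modulus of $u$ must be balanced against the Lipschitz bound on the radial cutoff sharply enough to keep the resulting kernel integrable without logarithmic losses.
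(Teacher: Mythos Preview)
Your construction and overall strategy coincide with the paper's: set $w(x)=u(x_1)\phi(|x|)$ (the paper's $\widetilde V$), use the Leibniz formula for the $L^\infty$ bound, then argue positivity near the origin. The genuine gap is in the first summand of your Leibniz decomposition. In the $N$-dimensional identity, the factor you write as ``$L_\Delta u(x_1)$'' is the $N$-dimensional logarithmic Laplacian of $f(x):=u(x_1)$, namely
\[
c_N\int_{B_1\subset\R^N}\frac{u(x_1)-u(x_1+y_1)}{|y|^N}\,dy-c_N\int_{\R^N\setminus B_1}\frac{u(x_1+y_1)}{|y|^N}\,dy+\rho_N\,u(x_1),
\]
with $N$-dimensional kernel $|y|^{-N}$ and constants $c_N,\rho_N$. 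Theorem~\ref{thm:L}, which you invoke for both boundedness and positivity, is about the \emph{one}-dimensional operator with kernel $|y_1|^{-1}$ and constants $c_1,\rho_1$; it says nothing about $L_\Delta^{(N)} f$. This is not a formality: all of Section~2.2 (Lemmas~\ref{J1}--\ref{J2} and Theorem~\ref{thm:bdry}) is an independent, direct computation of the $N$-dimensional singular integral above, yielding $J>\omega_N\sqrt{\ln 2}+o(1)$, and that computation is the substantive content of the higher-dimensional proof. One could alternatively attempt a dimensional reduction (integrate out $y'\in\R^{N-1}$ and reconcile the cutoffs $\{|y|<1\}$ versus $\{|y_1|<1\}$ and the constants $\rho_N$ versus $\rho_1$) to show $L_\Delta^{(N)} f(x)=L_\Delta^{(1)} u(x_1)$, consistent with the Fourier symbol, but since $u$ is only $\cL^{1/2}$ at $0$ and $f\notin L^2(\R^N)$ this requires a careful justification you have not supplied.

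A smaller issue: for $x$ near the origin, the sum of your ``second summand'' $u(x_1)L_\Delta\phi(|x|)$ and the non-singular part of $R$ is not $O(u(x_1))$; a short computation shows it equals $c_N\int_{|y-x|>1}u(y_1)\bigl(1-\phi(|y|)\bigr)|x-y|^{-N}\,dy$, which is $O(1)$ rather than $o(1)$. It happens to be nonnegative, so the lower bound on $L_\Delta w$ survives, but your argument as written does not detect this sign. The paper avoids the issue by not going through Leibniz for the positivity step: it bounds the singular and non-singular pieces of $L_\Delta\widetilde V$ directly via Theorem~\ref{thm:bdry} and Lemma~\ref{V:lem:3}, then applies Lemma~\ref{V:lem:1}.
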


To show Theorem~\ref{barrier:thm} we construct explicitly the function $w$ and estimate its logarithmic Laplacian with direct calculations. 

We show first some auxiliary lemmas. Let $\Omega\subset\R^N$ be an open bounded set. 

\begin{lemma}\label{Ldelta}
Let $V\in L^\infty(\R^N)$ be such that $V=0$ in $\R^N\backslash \Omega$ and
\begin{align}\label{C:bd}
\sup_{z\in\Omega}\left|\int_{B_1}
\frac{
V(z)
-V(z+y)}{|y|^N}\, dy\right|<C
\end{align}
for some $C>0$. Then $L_\Delta V\in L^\infty(\Omega)$. 
\end{lemma}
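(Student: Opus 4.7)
The plan is to estimate $L_\Delta V(x)$ for $x\in\Omega$ directly from the integral representation \eqref{loglap:def}, bounding each of the three pieces uniformly in $x$. The near-field piece $c_N \int_{B_1(x)} \frac{V(x)-V(y)}{|x-y|^N}\, dy$ becomes, after the translation $y\mapsto x+y$, exactly $c_N \int_{B_1} \frac{V(x)-V(x+y)}{|y|^N}\, dy$, which is bounded by $c_N C$ uniformly in $x\in\Omega$ by hypothesis \eqref{C:bd}. The pointwise term $\rho_N V(x)$ is trivially bounded by $\rho_N\|V\|_\infty$.

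For the remaining far-field piece, the assumption $V\equiv 0$ in $\R^N\setminus\Omega$ restricts the integration to $\Omega\setminus B_1(x)$. Setting $R:=\max\{1,\operatorname{diam}\Omega\}$, one has $\Omega\setminus B_1(x)\subset B_R(x)\setminus B_1(x)$ for every $x\in\Omega$, so a translation yields
\[
\left|\, c_N \int_{\R^N\setminus B_1(x)} \frac{V(y)}{|x-y|^N}\, dy \,\right| \leq c_N\|V\|_\infty \int_{B_R\setminus B_1} |y|^{-N}\, dy,
\]
and the last integral is a finite constant depending only on $N$ and $\Omega$ (computable in polar coordinates as a multiple of $\ln R$). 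Summing the three bounds gives the desired uniform estimate $\|L_\Delta V\|_{L^\infty(\Omega)}<\infty$. No substantial obstacle is expected here; the argument is a direct decomposition that uses the singularity-control hypothesis \eqref{C:bd} near the diagonal, the $L^\infty$-bound on $V$, and the boundedness of $\Omega$ for the far-field term. The real work of the section lies not in this lemma but in \emph{verifying} the hypothesis \eqref{C:bd} for the explicit barrier $w$ to be constructed, which is what the sharp direct computations that follow will accomplish.
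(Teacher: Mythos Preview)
Your proof is correct and follows essentially the same decomposition as the paper: translate the near-field integral to invoke hypothesis \eqref{C:bd}, bound the pointwise term by $|\rho_N|\|V\|_\infty$, and use $V\equiv 0$ outside $\Omega$ together with boundedness of $\Omega$ for the far-field piece. The only cosmetic difference is that the paper bounds the far-field term more crudely via $|x-y|^{-N}\le 1$ on $\Omega\setminus B_1(x)$, obtaining $c_N\|V\|_\infty|\Omega|$ directly, whereas you pass through the annulus $B_R\setminus B_1$ and polar coordinates; both are fine.
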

\begin{proof}
Let $x\in \Omega$, then
\begin{align*}
|L_\Delta V(x)|
&=\left|c_N\int_{B_1}\frac{V(x)-V(x+y)}{|y|^N}\, dy-c_N\int_{\Omega\backslash B_1(x)}\frac{V(y)}{|y-x|^N}\, dy+\rho_N V(x)\right|\\
&\leq c_NC+\|V\|_\infty(c_N|\Omega|+|\rho_N|)<\infty.
\end{align*}
\end{proof}

\begin{lemma}\label{V:lem:1}
Let $V\in C(\overline{\Omega})$ be such that $V=0$ on $\partial\Omega$ and $\eta\in[\frac{3}{4},1]$. Let $U\subset \Omega$ be an open subset such that $\overline{U}\cap\partial\Omega\neq \emptyset$ and
\begin{align}\label{d}
\inf_{z\in U}
\int_{B_1}
\frac{
V(z)
-V(z+y)}{|y|^N}\, dy\geq (1+\eta)\sigma,\qquad 
\sup_{z\in U}
c_N\int_{\R^N\backslash B_1}
\frac{
|V(z+y)|}{|y|^N}\, dy\leq \frac{\sigma}{2}
\end{align}
for some $\sigma>0$. Then there is an open subset $U'\subset U$ such that $\overline{U'}\cap\partial\Omega=\overline{U}\cap\partial\Omega$ and 
\begin{align*}
L_\Delta V\geq \sigma\qquad \text{ in $U'$.}
\end{align*}
\end{lemma}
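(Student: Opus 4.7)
The plan is to plug the two bounds in~\eqref{d} directly into the integral representation~\eqref{loglap:def} of $L_\Delta V$. After the change of variable $y\mapsto z+y$ in the singular part, for every $z\in U$ we have
\begin{align*}
L_\Delta V(z) = c_N\!\int_{B_1}\!\frac{V(z)-V(z+y)}{|y|^N}\,dy \;-\; c_N\!\int_{\R^N\setminus B_1}\!\frac{V(z+y)}{|y|^N}\,dy \;+\; \rho_N V(z).
\end{align*}
The first term is bounded below by $(1+\eta)\sigma$ by the first inequality in~\eqref{d}, and the second term is bounded below by $-\sigma/2$ by the second inequality in~\eqref{d} (using $V(z+y)\leq |V(z+y)|$). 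Combining these bounds and using $\eta\geq 3/4$, for every $z\in U$ we obtain
\begin{align*}
L_\Delta V(z) \;\geq\; (1+\eta)\sigma - \tfrac{\sigma}{2} + \rho_N V(z) \;\geq\; \tfrac{5}{4}\sigma + \rho_N V(z).
\end{align*}

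Next, I would absorb the boundary correction $\rho_N V(z)$ into a margin of $\sigma/4$ by passing to a smaller open set. Since $V\in C(\overline{\Omega})$ vanishes on the compact set $K:=\overline{U}\cap\partial\Omega$, the uniform continuity of $V$ produces an open neighborhood $W$ of $K$ in $\R^N$ on which $|V|<\sigma/(4(|\rho_N|+1))$. Setting $U':=U\cap W$ gives an open subset of $U$ on which $|\rho_N V|\leq\sigma/4$, and hence
\begin{align*}
L_\Delta V(z) \;\geq\; \tfrac{5}{4}\sigma - \tfrac{\sigma}{4} \;=\; \sigma \qquad\text{for all }z\in U'.
\end{align*}

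Finally, I would verify that $\overline{U'}\cap\partial\Omega=K$. The inclusion $\overline{U'}\cap\partial\Omega\subset K$ is immediate from $U'\subset U$. Conversely, any $x\in K$ lies in $\overline{U}$ and in the open set $W$, so every sufficiently small neighborhood of $x$ is contained in $W$ and still meets $U$; hence it meets $U\cap W=U'$, giving $x\in\overline{U'}$. The main ``obstacle'' is purely bookkeeping: $U'$ must simultaneously be small enough that $|V|$ absorbs $\rho_N V$ and yet large enough that its closure still reaches every point of $K$. The continuity of $V$ together with $V|_{\partial\Omega}=0$ delivers both requirements at once, so no additional ingredient beyond~\eqref{d} is needed.
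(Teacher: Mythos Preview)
Your proof is correct and follows essentially the same route as the paper's: write $L_\Delta V(z)$ via the integral representation, bound the singular and far-field pieces using the two hypotheses in~\eqref{d}, and then absorb the $\rho_N V(z)$ term by shrinking to an open set $U'$ near $\partial\Omega$ where $|\rho_N V|<\sigma/4$, using continuity of $V$ and $V|_{\partial\Omega}=0$. You give more detail than the paper on the construction of $U'=U\cap W$ and the verification that $\overline{U'}\cap\partial\Omega=\overline{U}\cap\partial\Omega$, but the argument is the same.
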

\begin{proof}
Since $V=0$ on $\partial \Omega$ and $\overline{U}\cap\partial\Omega\neq \emptyset$, there is, by continuity, an open subset $U'\subset U$ with $\overline{U'}\cap\partial\Omega=\overline{U}\cap\partial\Omega$ and such that $|\rho_N V|<\frac{\sigma}{4}$ in $U'$. Then, by~\eqref{d}, for every $x\in U'$,
\begin{align*}
L_\Delta V(x)
&=c_N\int_{B_1}\frac{V(x)-V(x+y)}{|y|^N}\, dy-c_N\int_{\R^N\backslash B_1}\frac{V(x+y)}{|y|^N}\, dy+\rho_N V(x)\\
&\geq (1+\eta)\sigma-\frac{\sigma}{2}-\frac{\sigma}{4}\geq \sigma.
\end{align*}
\end{proof}

Now, we split the proof of Theorem~\ref{barrier:thm} in two cases: the (simpler) one-dimensional case and the higher-dimensional case. 

\subsection{The one-dimensional case}

Let 
\begin{align}\label{zeta}
0<\zeta<\frac{\sqrt{\ln 2}}{4}\sqrt{\ln \frac{4}{3}}<\frac{1}{4},
\end{align}
let $\varphi\in C^\infty_c(\R)$ be an even function such that
\begin{align} \label{phi:def}
\varphi=1\quad\text{ in $(-1-\zeta,1+\zeta)$},\qquad 
\varphi=0\quad \text{ in $\R\backslash (-1-2\zeta,1+2\zeta)$,}\qquad
0\leq \varphi \leq 1\quad \text{ in $\R$},
\end{align}
and let $u:\R\to\R$ be given by
 \begin{align}\label{u}
     u(x):=\frac{1}{\sqrt{-\ln \frac{x}{2}}}\varphi(x)\chi_{[0,\infty)}(x),
 \end{align}
 where $\chi_{[0,\infty)}$ is the characteristic function of $[0,\infty)$, see Figure~\ref{fig1} (left).

The goal of this section is to show Theorem~\ref{barrier:thm} in the case $N=1$, which we state next.
 
 \begin{theorem}\label{thm:L} The function $u$ given by~\eqref{u} belongs to ${\mathbb H}((0,2))\cap C^\infty(0,\infty)\cap \cL^{\frac{1}{2}}(\R)$ and there is $\delta\in(0,2)$ such that
\begin{align*}
L_\Delta u\in L^\infty((0,2))\qquad \text{ and }\qquad L_\Delta u \geq \sqrt{\ln 2}\quad \text { in }(0,\delta).
\end{align*}
\end{theorem}

We show first some auxiliary lemmas.

\begin{lemma}\label{r:lem}
The function $u$ given by~\eqref{u} satisfies that 
\begin{align*}
\int_{\R\backslash (-1,1)} \frac{u(\eps+y)}{|y|}\ dy <\frac{1}{2}\sqrt{\ln 2}\qquad \text{ for }\eps\in(0,2\zeta).
\end{align*}
\end{lemma}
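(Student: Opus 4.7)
The plan is a direct computation exploiting the small support of $u$ and the explicit bound on $\zeta$. First I would record the support of $u$: by~\eqref{phi:def} and~\eqref{u}, $u$ vanishes outside $[0,1+2\zeta]$, and $0\leq u(x)\leq \frac{1}{\sqrt{-\ln(x/2)}}$ on that interval (since $0\leq \varphi\leq 1$).

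Next, I change variables $z=\eps+y$, which turns the integral into
\[
\int_{\R\setminus(-1,1)}\frac{u(\eps+y)}{|y|}\,dy
= \int_{|z-\eps|\geq 1}\frac{u(z)}{|z-\eps|}\,dz.
\]
The region $z\leq \eps-1$ contributes nothing, because $\eps<2\zeta<\tfrac{1}{2}$ forces $\eps-1<0$, where $u$ vanishes. Hence only $z\geq \eps+1$ contributes, and combined with the support condition the integration reduces to $z\in[\eps+1,1+2\zeta]$. On this set $z-\eps\geq 1$, so $\frac{1}{z-\eps}\leq 1$, and therefore
\[
\int_{\R\setminus(-1,1)}\frac{u(\eps+y)}{|y|}\,dy \leq \int_{\eps+1}^{1+2\zeta}\frac{dz}{\sqrt{-\ln(z/2)}}\leq \int_{1}^{1+2\zeta}\frac{dz}{\sqrt{-\ln(z/2)}}.
\]

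Finally, for $z\in[1,1+2\zeta]$ one has $z/2\leq (1+2\zeta)/2<3/4$ (since $\zeta<1/4$ by~\eqref{zeta}), so $-\ln(z/2)\geq \ln(4/3)$ and the last integral is at most
\[
\frac{2\zeta}{\sqrt{\ln(4/3)}}.
\]
Plugging in the explicit bound $\zeta<\tfrac{\sqrt{\ln 2}}{4}\sqrt{\ln(4/3)}$ from~\eqref{zeta} yields $\frac{2\zeta}{\sqrt{\ln(4/3)}}<\frac{1}{2}\sqrt{\ln 2}$, which is exactly the claim. There is no real obstacle here; the whole point of the peculiar-looking constraint~\eqref{zeta} is precisely to make this elementary estimate close with constant $\tfrac{1}{2}\sqrt{\ln 2}$, which will match the threshold $\sigma/2$ required in Lemma~\ref{V:lem:1} when this barrier is used later.
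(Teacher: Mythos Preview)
Your proof is correct and essentially the same as the paper's: both reduce the integral to $z\in[\eps+1,1+2\zeta]$ via the support of $u$, bound the two factors $\frac{1}{|z-\eps|}\leq 1$ and $\frac{1}{\sqrt{-\ln(z/2)}}\leq \frac{1}{\sqrt{\ln(4/3)}}$ (the paper does these in the opposite order), and then invoke~\eqref{zeta} to conclude. Your closing remark about why the constant $\tfrac12\sqrt{\ln 2}$ matters is also on point.
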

\begin{proof}
By~\eqref{zeta} it follows that, for $\eps\in (0,2\zeta)\subset (0,\frac{1}{2})$,
\begin{align*}
0<\int_{\R\backslash (-1,1)} \frac{u(\eps+y)}{|y|}\ dy
&=\int_{1+\eps}^{1+2\zeta} \frac{1}{\sqrt{-\ln \frac{y}{2}}}\frac{\varphi(y)}{|y-\eps|}\ dy
\leq  \frac{1}{\sqrt{-\ln \frac{1+2\zeta}{2}}}\int_{1}^{1+2\zeta-\eps}\frac{1}{|y|}\ dy\\
&\leq\frac{2\zeta}{\sqrt{-\ln \frac{1+\frac{1}{2}}{2}}}=\frac{2\zeta}{\sqrt{-\ln \frac{3}{4}}}<\frac{1}{2}\sqrt{\ln 2}.
\end{align*}
\end{proof}

\begin{lemma}\label{J1:s}
It holds that
\begin{align*}
    \int_{-\eps}^{\eps} \frac{|u(\eps)-u(y+\eps)|}{|y|}\ dy=o(1)\qquad \text{ as }\eps\to 0.
\end{align*}
In particular,
\begin{align*}
J_1=J_1(\eps):=\int_{0}^{2\eps} \frac{u(\eps)-u(y)}{|\eps-y|}\ dy=o(1)\qquad \text{ as }\eps\to 0.
\end{align*}
\end{lemma}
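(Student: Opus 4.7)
The plan is to reduce the estimate to an explicit computation by exploiting that, for $\eps$ small enough, every argument $y+\eps$ with $y\in(-\eps,\eps)$ lies in $(0,2\eps)\subset(0,1+\zeta)$, where $\varphi\equiv 1$ by~\eqref{phi:def}. Consequently $u$ coincides on this range with the smooth strictly increasing function $g(x):=(\ln(2/x))^{-1/2}$. Splitting the interval symmetrically and using monotonicity to remove the absolute values yields
\begin{align*}
\int_{-\eps}^{\eps}\frac{|u(\eps)-u(\eps+y)|}{|y|}\,dy
=\int_0^\eps \frac{g(\eps+y)-g(\eps)}{y}\,dy+\int_0^\eps \frac{g(\eps)-g(\eps-y)}{y}\,dy.
\end{align*}

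Next I would rescale via $y=\alpha\eps$ and set $L:=\ln(2/\eps)\to\infty$ as $\eps\to 0^+$. Using the identity $g(\eps(1\pm\alpha))=(L\mp\ln(1\pm\alpha))^{-1/2}$, each summand becomes an explicit integral over $\alpha\in(0,1)$. The key algebraic ingredient is the telescoping formula
\begin{align*}
\frac{1}{\sqrt{L}}-\frac{1}{\sqrt{L+\beta}}=\frac{\beta}{\sqrt{L(L+\beta)}\,(\sqrt{L}+\sqrt{L+\beta})},
\end{align*}
together with $|\ln(1\pm\alpha)|\leq 2\alpha$ on $(0,1/2)$. For the increasing summand these give a uniform pointwise bound $\leq C L^{-3/2}$ on $(0,1)$; for the decreasing summand the same estimate works on $(0,1/2)$, while on $(1/2,1)$ one uses $1/\alpha\leq 2$ and the crude bound $g(\eps)-g(\eps(1-\alpha))\leq g(\eps)=L^{-1/2}$, contributing $O(L^{-1/2})$. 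Hence the full integral is $O(|\ln\eps|^{-1/2})=o(1)$, which proves the first assertion. The ``In particular'' clause follows immediately: after substituting $y=\eps+z$ in $J_1$, the triangle inequality gives $|J_1(\eps)|\leq\int_{-\eps}^{\eps}\frac{|u(\eps)-u(\eps+z)|}{|z|}\,dz$, and this is precisely the integral just estimated.

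The main technical point is that the naive bound $|g(\eps)-g(\eps+y)|\leq\|g'\|_\infty|y|$ fails because $g'$ blows up at $0$; the gain has to come from the explicit form of $g$. Subtracting the leading term $L^{-1/2}$ via the telescoping identity above exposes an extra $L^{-1}$ factor that offsets the singular weight $|y|^{-1}$, and the essentially logarithmic decay $O(|\ln\eps|^{-1/2})$ is what one should expect given the log-Hölder regularity exponent $1/2$ targeted by Theorem~\ref{barrier:thm}.
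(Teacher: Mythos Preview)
Your proof is correct and follows essentially the same strategy as the paper: rescale the integration variable by $\eps$, split off the region where $\eps(1-\alpha)$ approaches $0$ and bound it crudely by $g(\eps)=L^{-1/2}$, and on the remaining region extract the gain $L^{-3/2}$. The only cosmetic difference is that the paper obtains the $L^{-3/2}$ bound via the mean value theorem together with the monotonicity of $|u'|$ (yielding $\eps|u'(\eps/4)|=O(L^{-3/2})$), whereas you reach the same conclusion through the algebraic telescoping identity for $L^{-1/2}-(L+\beta)^{-1/2}$; the two devices are equivalent here.
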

\begin{proof}
Let $\eps>0$ be small enough so that $u$ is increasing in $(0,\eps)$.  If $y<0$, then $(y+1)\eps<\eps$ and $u(\eps)>u(\eps)-u((y+1)\eps)>0$. Therefore,
\begin{align*}
\int_{-1}^{-\frac{1}{2}}
\frac{|u(\eps)-u((y+1)\eps)|}{|y|}
\ dy
\leq 
2\int_{-1}^{-\frac{1}{2}}
u(\eps)
\ dy
=u(\eps)
=o(1)\qquad \text{ as }\eps\to 0,
\end{align*}
because $u(0)=0$. Using that $t\mapsto |u'(t)|$ is decreasing in $(0,\frac{1}{4})$, it follows that 
\begin{align*}
\frac{\eps}{2}<\eps((1-s)y+1)<2\eps<\frac{1}{4}    \qquad \text{ for }s\in(0,1)\text{ and }y\in(-\tfrac{1}{2},1),
\end{align*}
and then
\begin{align*}
&\int_{-\frac{1}{2}}^1\frac{|u(\eps)-u((y+1)\eps)|}{|y|}\, dy
\leq\eps\int_{-\frac{1}{2}}^1 \int_0^1 |u'(\eps((1-s)y+1))| \,ds\, dy\leq \frac{3}{2}\eps |u'(\tfrac{1}{4}\eps)|
=o(1)
\end{align*}
as $\eps\to 0$.
\end{proof}

\begin{lemma}\label{J2:s:aux}
It holds that
\begin{align*}
 I:=I(\eps):=\int_{\eps}^{1}\frac{1}{y}
 \left(
 \frac{1}{\sqrt{-\ln{(y+\eps)+\ln 2}}}-\frac{1}{\sqrt{-\ln (y)+\ln 2}}
 \right)\ dy=o(1)\qquad \text{ as }\eps\to 0.
\end{align*}
\end{lemma}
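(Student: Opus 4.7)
The plan is to rationalize the difference of inverse square roots and then split the integration domain so that each piece is controlled by elementary estimates. Writing $A(y):=-\ln(y+\eps)+\ln 2$ and $B(y):=-\ln y+\ln 2$, the identity
\begin{equation*}
\frac{1}{\sqrt{A}}-\frac{1}{\sqrt{B}}=\frac{B-A}{\sqrt{A}\sqrt{B}\bigl(\sqrt{A}+\sqrt{B}\bigr)}
\end{equation*}
together with $B(y)-A(y)=\ln(1+\eps/y)$ rewrites the integrand as
\begin{equation*}
\frac{\ln(1+\eps/y)}{y\,\sqrt{A(y)B(y)}\bigl(\sqrt{A(y)}+\sqrt{B(y)}\bigr)}.
\end{equation*}
For $y\in(\eps,1)$ and small $\eps$ we have $y+\eps<2$, so $A,B>0$; in fact both are finite and the integrand is nonnegative.

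Next I would split the integral at $y=\sqrt{\eps}$ and write $I(\eps)=I_1(\eps)+I_2(\eps)$. For the \emph{inner} piece $I_1(\eps)=\int_\eps^{\sqrt{\eps}}$, on this range $-\ln y\geq \tfrac{1}{2}\ln(1/\eps)$ and hence $A(y),B(y)\geq \tfrac{1}{2}\ln(1/\eps)$ for $\eps$ small. Therefore the denominator is bounded below by $c(\ln(1/\eps))^{3/2}$, and after the substitution $t=y/\eps$,
\begin{equation*}
I_1(\eps)\leq \frac{C}{(\ln(1/\eps))^{3/2}}\int_\eps^{\sqrt{\eps}}\frac{\ln(1+\eps/y)}{y}\,dy=\frac{C}{(\ln(1/\eps))^{3/2}}\int_1^{1/\sqrt{\eps}}\frac{\ln(1+1/t)}{t}\,dt.
\end{equation*}
The last integral is bounded uniformly in $\eps$ because $\ln(1+1/t)\leq 1/t$ for $t\geq 1$, giving a convergent $\int_1^\infty t^{-2}\,dt$. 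Hence $I_1(\eps)=O\bigl((\ln(1/\eps))^{-3/2}\bigr)$.

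For the \emph{outer} piece $I_2(\eps)=\int_{\sqrt{\eps}}^1$, the ratio $\eps/y\leq \sqrt{\eps}$ is small, so $\ln(1+\eps/y)\leq \eps/y$, while $A(y),B(y)$ stay bounded above and below by positive constants (since $\sqrt{\eps}\leq y\leq 1$ and $y+\eps$ stays away from both $0$ and $2$). A direct estimate then yields
\begin{equation*}
I_2(\eps)\leq C\int_{\sqrt{\eps}}^{1}\frac{\eps}{y^{2}}\,dy\leq C\sqrt{\eps}.
\end{equation*}
Combining the two bounds gives $I(\eps)\to 0$ as $\eps\to 0^+$. There is no real obstacle here beyond choosing the correct scale at which to split the integral; the key observation is that the cutoff $\sqrt{\eps}$ simultaneously makes the denominator large in the inner region (via $-\ln y\gtrsim \ln(1/\eps)$) and the numerator small in the outer region (via $\eps/y\leq \sqrt{\eps}$).
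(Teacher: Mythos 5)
Your proof is correct, and it takes a genuinely different route from the paper. The paper first applies the change of variables $z=-\ln y$, rewriting $I(\eps)$ as $\int_0^{-\ln\eps}\bigl((-\ln(e^{-z}+\eps)+\ln 2)^{-1/2}-(z+\ln 2)^{-1/2}\bigr)\,dz$, then bounds the integrand by a decreasing family $G_\eps(z)=\tfrac{1}{2}(-\ln(e^{-z}+\eps)+\ln 2)^{-3/2}$ and concludes via the generalized Lebesgue dominated convergence theorem. You instead rationalize the difference of reciprocal square roots so that the numerator becomes $\ln(1+\eps/y)$, and split the domain at the scale $y=\sqrt{\eps}$: near $y=\eps$ the denominator is large (of order $(\ln(1/\eps))^{3/2}$, after a lower bound on $A,B$ and a self-similar substitution $t=y/\eps$), while for $y\geq\sqrt{\eps}$ the numerator is $O(\eps/y)$ and a direct integration gives $O(\sqrt{\eps})$. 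The mechanism is different (elementary splitting rather than dominated convergence), and your argument is slightly stronger in that it produces an explicit rate $I(\eps)=O\bigl((\ln(1/\eps))^{-3/2}\bigr)$, whereas the paper only obtains $o(1)$. One small imprecision: on the outer region you write that $A$ and $B$ ``stay bounded above and below by positive constants,'' but $B(y)=-\ln y+\ln 2$ is not bounded above near $y=\sqrt{\eps}$ (it grows like $\tfrac12\ln(1/\eps)$); fortunately only the lower bound is used in the subsequent estimate, so the argument goes through unchanged.
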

\begin{proof}
Using the change of variables $z=-\ln y$ ($y=e^{-z}$, $dy=-e^{-z}dz$) we obtain that
\begin{align*}
I&=\int_{0}^{-\ln\eps}\frac{1}{\sqrt{-\ln(e^{-z}+\eps)+\ln 2}}
-
\frac{1}{\sqrt{z+\ln 2}}\ dz=o(1)
\end{align*}
as $\eps\to 0$. Indeed, consider
\begin{align*}
 F_\eps(z) := \left(\frac{1}{\sqrt{-\ln(e^{-z}+\eps)+\ln 2}}
-
\frac{1}{\sqrt{z+\ln 2}}\right)
\chi_{\{z<-\ln\eps\}}(z),
\end{align*}
for $z\in(0,\infty)$ and $\eps \in (0,1)$. Observe that
\begin{equation*}
    \frac{1}{\sqrt{s}} - \frac{1}{\sqrt{t}} \le \frac{1}{s^{3/2}} (t-s), \quad \ln{t} - \ln{s} \le \frac{1}{s}(t-s) \quad \text{for all } 0<s \le t,
\end{equation*}
by the mean value theorem. By applying these estimations to $F_\eps$ we deduce that
\begin{equation} \label{eq:dominated}
\begin{aligned}   
    F_\eps(z) &\le \frac{z+\ln(e^{-z}+\eps)}{2\left[-\ln(e^{-z} +\eps) + \ln 2 \right]^{3/2}} = \frac{\ln(e^{-z}+\eps) - \ln \left( e^{-z} \right)}{2\left[-\ln(e^{-z} +\eps) + \ln 2 \right]^{3/2}} \\
     & \le \frac{e^z \eps}{2\left[-\ln(e^{-z} +\eps) + \ln 2 \right]^{3/2}} \le \frac{1}{2\left[-\ln(e^{-z} +\eps) + \ln 2 \right]^{3/2}} \qquad \text{for } z \le -\ln \eps. 
\end{aligned}
\end{equation}
So let us define, for any $\eps \ge 0$,
\begin{equation*}
    G_\eps(z) := \frac{1}{2\left[-\ln(e^{-z} +\eps) + \ln 2 \right]^{3/2}}, \quad z \in \R.
\end{equation*}
Observe that $G_\eps(z) \searrow G_0(z)$ as $\eps \to 0$ for $z\in\R$, so the monotone convergence theorem implies that $\int_0^\infty G_\eps(z) \searrow \int_0^\infty G_0(z)$ as $\eps \to 0$. On the other hand, by~\eqref{eq:dominated},  $F_\eps(z) \le G_\eps(z)$ for $z \in \R$ and $F_\eps(z)\to F_0(z)=0$ as $\eps\to 0$ for $z\in\R$, so the generalized Lebesgue dominated convergence theorem implies that $\int_0^\infty F_\eps \to 0$ as $\eps \to 0$, as claimed.
\end{proof}

\begin{lemma}\label{J2:s}
It holds that
\begin{align}\label{J2:c:1}
J_2=J_2(\eps):=-\int_{2\eps}^{1+\eps}\frac{u(y)}{|\eps-y|}\ dy = 2\sqrt{\ln 2}-2\sqrt{-\ln \eps}+o(1)\qquad \text{ as }\eps\to 0.
\end{align}
\end{lemma}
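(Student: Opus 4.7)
The plan is to reduce $J_2$ to the integral studied in Lemma~\ref{J2:s:aux}, evaluate the resulting integral explicitly by a logarithmic change of variables, and then identify the leading asymptotic term.

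\textbf{Step 1 (simplify the integrand and shift).} For $\eps>0$ sufficiently small, the interval $(2\eps, 1+\eps)$ lies inside $(0, 1+\zeta)$, so $\varphi\equiv 1$ there and $u(y)=(\ln 2-\ln y)^{-1/2}$ on this interval. Since $y>\eps$, we also have $|\eps-y|=y-\eps$. The substitution $y\mapsto y+\eps$ thus gives
\begin{align*}
J_2(\eps)
= -\int_{2\eps}^{1+\eps}\frac{1}{(y-\eps)\sqrt{\ln 2-\ln y}}\, dy
= -\int_{\eps}^{1}\frac{1}{y\sqrt{\ln 2-\ln(y+\eps)}}\, dy.
\end{align*}

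\textbf{Step 2 (replace $y+\eps$ by $y$ in the square root).} Lemma~\ref{J2:s:aux} (applied exactly as stated) yields
\begin{align*}
\int_{\eps}^{1}\frac{1}{y\sqrt{\ln 2-\ln(y+\eps)}}\, dy
= \int_{\eps}^{1}\frac{1}{y\sqrt{\ln 2-\ln y}}\, dy + o(1)\qquad\text{as }\eps\to 0,
\end{align*}
so the problem reduces to computing the right-hand integral.

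\textbf{Step 3 (explicit evaluation).} Use the change of variables $t=\ln 2-\ln y$, so that $dt=-\frac{dy}{y}$; the bounds $y=\eps$ and $y=1$ become $t=\ln(2/\eps)$ and $t=\ln 2$, respectively. Then
\begin{align*}
\int_{\eps}^{1}\frac{1}{y\sqrt{\ln 2-\ln y}}\, dy
=\int_{\ln 2}^{\ln(2/\eps)}t^{-1/2}\, dt
=2\sqrt{\ln(2/\eps)}-2\sqrt{\ln 2}.
\end{align*}

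\textbf{Step 4 (extract the claimed asymptotic).} Combining Steps 1--3,
\begin{align*}
J_2(\eps)=-2\sqrt{\ln(2/\eps)}+2\sqrt{\ln 2}+o(1)\qquad\text{as }\eps\to 0.
\end{align*}
To match the statement, note that
\begin{align*}
\sqrt{\ln(2/\eps)}-\sqrt{-\ln\eps}=\frac{\ln 2}{\sqrt{\ln 2-\ln\eps}+\sqrt{-\ln\eps}}\longrightarrow 0\qquad\text{as }\eps\to 0,
\end{align*}
so $\sqrt{\ln(2/\eps)}=\sqrt{-\ln\eps}+o(1)$, and~\eqref{J2:c:1} follows. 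The only non-routine ingredient is the uniform control needed in Step~2, but this has already been isolated in Lemma~\ref{J2:s:aux}; the remainder is a direct calculation.
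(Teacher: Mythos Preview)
Your proof is correct and follows essentially the same approach as the paper: both shift the integral via $y\mapsto y+\eps$, invoke Lemma~\ref{J2:s:aux} to replace $\ln(y+\eps)$ by $\ln y$ with an $o(1)$ error, evaluate the remaining integral explicitly (the paper records this as~\eqref{a1}), and finally absorb the difference $\sqrt{\ln(2/\eps)}-\sqrt{-\ln\eps}$ into the $o(1)$ term. The only difference is cosmetic ordering of these steps.
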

\begin{proof}
Let $\eps\in(0,\zeta)$ with $\zeta$ as in~\eqref{zeta}. Observe that
\begin{align}\label{a1}
\int_{\eps}^1 \frac{1}{y\sqrt{-\ln \frac{y}{2}}}\ dy=2\sqrt{-\ln\frac{\eps}{2}}-2\sqrt{\ln 2}.
\end{align}
Then, using~\eqref{a1},
\begin{align}
J_2
&=-\int_{2\eps}^{1+\eps}\frac{u(y)}{|\eps-y|}\ dy 
=-\int_{\eps}^{1}\frac{u(y+\eps)}{y}\ dy\notag\\
&=2\sqrt{\ln 2}-2\sqrt{-\ln\frac{\eps}{2}}
+\int_{\eps}^{1}\frac{1}{y}\left(\frac{1}{\sqrt{-\ln \frac{y}{2}}}-u(y+\eps)\right)\ dy\notag\\
&=
2\sqrt{\ln 2}-2\sqrt{-\ln\frac{\eps}{2}}
+\int_{\eps}^{1}\frac{1}{y}\left(\frac{1}{\sqrt{-\ln (y)+\ln 2}}-\frac{1}{\sqrt{-\ln{(y+\eps)+\ln 2}}}\right)\ dy\notag\\
&=
2\sqrt{\ln 2}-2\sqrt{-\ln\frac{\eps}{2}}
-I,\label{J2:1}
\end{align}
with $I$ as in Lemma~\ref{J2:s:aux}. Therefore, by~\eqref{J2:1} and Lemma~\ref{J2:s:aux},
\begin{align*}
J_2=2\sqrt{\ln 2}-2\sqrt{-\ln \eps}+o(1)\qquad \text{ as }\eps\to 0.
\end{align*}
\end{proof}

\begin{theorem}\label{thm:bdry:s} The function $u$ given by~\eqref{u} belongs to $C^\infty((0,\infty))$ and
\begin{align}\label{cl4}
J&=J(\eps):=\int^{\eps+1}_{\eps-1} \frac{u(\eps)-u(y)}{|y-\eps|}\ dy = 2\sqrt{\ln 2}+o(1)\qquad \text{ as }\eps\to 0^+.
\end{align}
\end{theorem}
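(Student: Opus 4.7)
The plan is to split the integral at the natural break-points $y=0$ (where $u$ transitions to identically zero) and $y=2\eps$ (which measures the singular region symmetrically around $y=\eps$), and then invoke the two technical lemmas proved just above. First I would note that $u\in C^\infty((0,\infty))$: on $(0,2)$ the function $x\mapsto 1/\sqrt{-\ln(x/2)}$ is smooth, $\varphi$ is smooth by construction, and $u$ vanishes identically on a neighborhood of $[1+2\zeta,\infty)$, so smoothness follows by multiplication.

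For $\eps>0$ small enough that $2\eps<1$ and $\varphi\equiv 1$ on $[0,\eps+1]$, I would write
\begin{align*}
J(\eps)=\int_{\eps-1}^{0}\frac{u(\eps)-u(y)}{|y-\eps|}\,dy+J_1(\eps)+\int_{2\eps}^{\eps+1}\frac{u(\eps)-u(y)}{y-\eps}\,dy.
\end{align*}
The middle term is $o(1)$ by Lemma~\ref{J1:s}. On the first interval $u(y)=0$, and a direct computation gives
\begin{align*}
\int_{\eps-1}^{0}\frac{u(\eps)}{\eps-y}\,dy=u(\eps)\int_{\eps}^{1}\frac{dt}{t}=-u(\eps)\ln\eps.
\end{align*}
For the third interval I would separate the $u(\eps)$-part from the $u(y)$-part:
\begin{align*}
\int_{2\eps}^{\eps+1}\frac{u(\eps)-u(y)}{y-\eps}\,dy=u(\eps)\int_{2\eps}^{\eps+1}\frac{dy}{y-\eps}+J_2(\eps)=-u(\eps)\ln\eps+J_2(\eps),
\end{align*}
so that altogether $J(\eps)=-2u(\eps)\ln\eps+J_1(\eps)+J_2(\eps)$.

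The key step—and the point of the whole construction of $u$—is the asymptotic
\begin{align*}
-2u(\eps)\ln\eps=\frac{-2\ln\eps}{\sqrt{-\ln\eps+\ln 2}}=2\sqrt{-\ln\eps}+o(1)\qquad\text{as }\eps\to 0^+,
\end{align*}
which I would obtain by writing $-\ln\eps=(-\ln\eps+\ln 2)-\ln 2$ to split the fraction into $\sqrt{-\ln\eps+\ln 2}-\ln 2/\sqrt{-\ln\eps+\ln 2}$ and then comparing $\sqrt{-\ln\eps+\ln 2}$ with $\sqrt{-\ln\eps}$ via conjugation. Combining this with Lemma~\ref{J2:s}, the diverging contributions $+2\sqrt{-\ln\eps}$ and $-2\sqrt{-\ln\eps}$ cancel exactly, leaving $J(\eps)=2\sqrt{\ln 2}+o(1)$ as claimed. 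The only subtle point is precisely this cancellation: the boundary contribution from the flat half-line $y<0$ matches, to leading order, the divergence of the right-hand integral $J_2$, which is exactly why the exponent $\tfrac{1}{2}$ in the profile $\ell^{1/2}$ is the critical one.
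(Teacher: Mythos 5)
Your proposal is correct and follows essentially the same decomposition and argument as the paper: split the integral at $y=0$ and $y=2\eps$, observe the flat half-line contributes $-u(\eps)\ln\eps$, invoke Lemma~\ref{J1:s} for the symmetric middle piece, pull out $-u(\eps)\ln\eps$ again from the right-hand piece to isolate $J_2$, and then cancel the diverging terms against Lemma~\ref{J2:s} using $-2u(\eps)\ln\eps=2\sqrt{-\ln\eps}+o(1)$. The only difference is that you spell out the elementary expansion of $-2u(\eps)\ln\eps$ in detail, which the paper simply states as a fact.
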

\begin{proof}
That $u\in C^\infty((0,\infty))$ is clear. Moreover, since $u=0$ in $(-\infty,0)$,
\begin{align}
J&=\int^{\eps+1}_{0} \frac{u(\eps)-u(y)}{|\eps-y|}\ dy
+u(\eps)\int^{0}_{\eps-1}|\eps-y|^{-1}\ dy=\int^{\eps+1}_{0} \frac{u(\eps)-u(y)}{|\eps-y|}\ dy
+u(\eps)(-\ln \eps)\label{e1:s}
\end{align}
for $\eps>0$ small.  Let $J_1$ and $J_2$ as in Lemmas~\ref{J1:s} and~\ref{J2:s}. Then,
\begin{align}
\int_{0}^{1+\eps} \frac{u(\eps)-u(y)}{|\eps-y|}\ dy
&=
\int_{0}^{2\eps} \frac{u(\eps)-u(y)}{|\eps-y|}\ dy
+
\int_{2\eps}^{1+\eps} \frac{u(\eps)-u(y)}{|\eps-y|}\ dy\notag\\
&=J_1+J_2+u(\eps)\int_{2\eps}^{1+\eps} \frac{1}{|\eps-y|}\ dy
=J_1+J_2+u(\eps)(-\ln \eps).\label{e2:s}
\end{align}
Now the claim~\eqref{cl4} follows by~\eqref{e1:s},~\eqref{e2:s}, Lemma~\ref{J1:s}, Lemma~\ref{J2:s}, and the fact that
\begin{align*}
2u(\eps)(-\ln(\eps))=2\sqrt{-\ln \eps}+o(1)\qquad \text{ as $\eps\to 0^+$.}
\end{align*}
\end{proof}

 \begin{lemma}\label{uinH:lem}
The function $u$ given by~\eqref{u} belongs to ${\mathbb H}((0,2))$.
 \end{lemma}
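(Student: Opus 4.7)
The plan is to verify the three defining conditions of $\mathbb{H}((0,2))$ in \eqref{Hdef}. First, $u\in L^\infty(\R)$ since $-\ln(x/2)\geq \ln\bigl(2/(1+2\zeta)\bigr)>0$ on the support of $\varphi\,\chi_{[0,\infty)}$, and $u$ has compact support in $[0,1+2\zeta]\subset(0,2)$ (with $u(0)=0$ by the limit $(-\ln(x/2))^{-1/2}\to 0$ as $x\to 0^+$ and $u\equiv 0$ on $(-\infty,0)$). Hence $u\in L^2(\R)$ and $u=0$ a.e.\ in $\R\setminus(0,2)$, so the remaining task is to prove
\begin{align*}
[u]^2:=\iint_{|x-y|\le 1}\frac{(u(x)-u(y))^2}{|x-y|}\,dx\,dy<\infty.
\end{align*}

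I would split $[u]^2=I_1+2I_2$ according to whether both $x,y$ lie in $(0,\infty)$ or exactly one of them is $\le 0$ (the region $x,y\le 0$ contributes nothing). For $I_1$, $u$ is smooth on $(0,\infty)$, so Cauchy--Schwarz gives $(u(x)-u(y))^2\le (x-y)\int_y^x u'(t)^2\,dt$ for $0<y<x$, and Fubini reduces $I_1$ to a constant multiple of the one-dimensional integral $\int_0^{1+2\zeta} t\,u'(t)^2\,dt$. A direct computation near $0$ (where $\varphi\equiv 1$) gives $u'(t)=\tfrac{1}{2t(\ln(2/t))^{3/2}}$, so $t\,u'(t)^2=O\bigl((t(\ln(2/t))^3)^{-1}\bigr)$, which is integrable near $0$ via the substitution $s=\ln(2/t)$; the remaining part of the integral, on an interval bounded away from $0$, is finite because $u$ is $C^\infty$ there with bounded derivative.

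For $I_2$, since $u\equiv 0$ on $(-\infty,0]$ and $|x-y|\le 1$ forces $y\ge x-1$, only $x\in(0,1]$ contributes, and
\begin{align*}
I_2=\int_{0}^{1}u(x)^2\int_{x-1}^{0}\frac{dy}{x-y}\,dx=\int_{0}^{1}u(x)^2(-\ln x)\,dx.
\end{align*}
Since $u(x)^2=O((-\ln x)^{-1})$ near $0$ and $u$ is bounded on $(0,1]$, the integrand is uniformly bounded on $(0,1]$, and $I_2<\infty$.

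The only delicate point is the singular behavior of $u'$ at the origin: a naive pointwise Lipschitz estimate is too weak to control $I_1$, but the Cauchy--Schwarz/Fubini maneuver converts the question into the convergent integral $\int_0^{\delta}dt/(t(\ln(2/t))^3)$, and this is really the single essential calculation in the argument.
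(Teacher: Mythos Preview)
Your argument is correct and in fact cleaner than the paper's. The paper does not use the Cauchy--Schwarz/Fubini reduction to the weighted integral $\int t\,u'(t)^2\,dt$; instead it splits $\|u\|^2$ into a term of the form $\int_0^2 u(x)\int_{x-1}^{x+1}\frac{u(x)-u(y)}{|x-y|}\,dy\,dx$ and a second ``cross'' term $\int\int\frac{(u(y+x)-u(x))u(y+x)}{|y|}\,dy\,dx$. The first is controlled by invoking Theorem~\ref{thm:bdry:s} (the computation showing $J(\eps)=2\sqrt{\ln 2}+o(1)$), and the second is handled by a case-by-case decomposition in $x$ (near and away from $0$) that in turn appeals to Lemma~\ref{J1:s}. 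Thus the paper's proof recycles the fairly delicate principal-value estimates established earlier, while your proof is self-contained and rests on a single, standard $H^{1/2}$-type calculation: the observation that the Gagliardo-type energy is controlled by $\int t\,u'(t)^2\,dt$, together with the explicit derivative $u'(t)\sim \tfrac{1}{2t(\ln(2/t))^{3/2}}$ and the bounded integrand $u(x)^2(-\ln x)$ for the half-line crossing. One minor wording point: the support $[0,1+2\zeta]$ is not literally contained in the open interval $(0,2)$, but since you note $u(0)=0$ the conclusion $u=0$ a.e.\ on $\R\setminus(0,2)$ is unaffected.
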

 \begin{proof}
Let $\zeta$ be as in~\eqref{zeta}. For $x\in U:=(-3,3)\backslash(-\zeta,\zeta)$ and $y\in(-1,1),$ using a Taylor expansion, we have that
\begin{align*}
    |u(x+y)-u(x)|=|yu'(x)+|y|h(y)|\leq |y|(\|u'\|_{L^\infty(U)}+|h(y)|)
\end{align*}
for some $h\in L^\infty((-1,1))$ such that $h(y)=o(1)$ as $y\to 0.$  Then,
\begin{align*}
A_1&:=\int_{U}\int_{-1}^{1}\frac{(u(y+x)-u(x))u(y+x)}{|y|} \, dy\, dx
\leq \|u\|_\infty \int_{U}\int_{-1}^{1} \|u'\|_{L^\infty(U)}+\|h\|_{L^\infty((-1,1))}\, dy\, dx<\infty,\\
A_2&:=\int_{0}^{\zeta}\int_{x}^{1}\frac{(u(y+x)-u(x))u(y+x)}{|y|} \, dy\, dx
    \leq 2\|u\|^2_\infty \int_{0}^{\zeta}-\ln(x)\, dx<\infty.
\end{align*}
Moreover, using that $u=0$ in $(-\infty,0)$ and Lemma~\ref{J1:s},
\begin{align*}
A_3&:=\int_{0}^{\zeta}\int_{-1}^{x}\frac{(u(y+x)-u(x))u(y+x)}{|y|} \, dy\, dx
\leq \|u\|_\infty\int_{0}^{\zeta}\int_{-x}^{x}\frac{|u(x)-u(y+x)|}{|y|} \, dy\, dx<\infty.
\end{align*}
Furthermore,  using that $u=0$ in $(-\infty,0)$,
\begin{align*}
A_4&
:=\int_{-\zeta}^{0}\int_{-1}^{1}\frac{(u(y+x)-u(x))u(y+x)}{|y|} \, dy\, dx
=\int_{-\zeta}^{0}\int_{-x}^{1}\frac{(u(y+x)-u(x))u(y+x)}{|y|} \, dy\, dx\\
&\leq 2\|u\|^2_\infty\int_{-\zeta}^{0}-\ln(-x)\, dx<\infty.
\end{align*}
Then,
\begin{align}\label{As}
A:=\int_{\R}\int_{-1}^{1}\frac{(u(y+x)-u(x))u(y+x)}{|y|}\, dy\, dx=A_1+A_2+A_3+A_4<\infty.
\end{align}

Recall the definition of the norm $\|\cdot\|$ given in~\eqref{norm:def}.  Then,
\begin{align*}
2\|u\|^2
&=\int_{\R}\int_{x-1}^{x+1}\frac{(u(x)-u(y))(u(x)-u(y))}{|x-y|}\, dy\, dx\\
&=\int_{\R}\int_{x-1}^{x+1}
    \frac{(u(x)-u(y))u(x)}{|x-y|}\, dy\, dx
    + \int_{\R}\int_{x-1}^{x+1}\frac{(u(y)-u(x))u(y)}{|x-y|}
    \, dy\, dx.
\end{align*}
Using Fubini's theorem, a change of variables ($z=x-y$), and~\eqref{As},
\begin{align*}
\int_{\R} \int_{x-1}^{x+1}
    \frac{(u(x)-u(y))u(x)}{|x-y|}\, dy\, dx
&=
\int_{\R} \int_{y-1}^{y+1}
    \frac{(u(x)-u(y))u(x)}{|x-y|}\, dx\, dy\\
    &=
\int_{\R} \int_{-1}^{1}
    \frac{(u(z+y)-u(y))u(z+y)}{|z|}\, dz\, dy=A<\infty.
\end{align*}
Similarly,
\begin{align*}
&\int_{\R}\int_{x-1}^{x+1}\frac{(u(y)-u(x))u(y)}{|x-y|}\, dy\, dx
=\int_{\R}\int_{-1}^{1}\frac{(u(z+x)-u(x))u(z+x)}{|z|}\, dz\, dx=A<\infty.
\end{align*}
Thus $\|u\|<\infty$.  Since we also know that $u\in L^2(\R)$, it follows that $u\in {\mathbb H}((0,2))$ as claimed.
\end{proof}

We are ready to show Theorem~\ref{thm:L}. 

\begin{proof}[Proof of Theorem~\ref{thm:L}]
By Lemma~\ref{uinH:lem} and by the definition of $u$ it holds that $u\in{\mathbb H}((0,2))\cap C^\infty((0,\infty))\cap \cL^{\frac{1}{2}}(\R^N)$. By Theorem~\ref{thm:bdry:s} and the fact that $u\in C^\infty((0,\infty))$ we have that~\eqref{C:bd} holds (with $\Omega=(0,2)$ and $V=u$). Then, by Lemma~\ref{Ldelta}, we have that $L_\Delta u\in L^\infty((0,2))$.  Finally, by Lemma~\ref{V:lem:1} (with $\sigma=\sqrt{\ln 2}$), Lemma~\ref{r:lem}, and Theorem~\ref{thm:bdry:s} we have that $L_\Delta u\geq \sqrt{\ln 2}$ in $(0,\delta)$ for some $\delta>0$.
\end{proof}

\subsection{The higher-dimensional case}

In this section we extend the ideas of the one-dimensional case to higher dimensions.  Let $N\geq 2$,
\begin{align*}
\R^N_+:=\{x\in\R^N\::\: x_1>0\},    
\end{align*}
and let $\zeta$ be such that
\begin{align}\label{zeta2}
\zeta<\frac{1}{2}\left(\left(1+\frac{N\sqrt{\ln 2}}{4}\sqrt{\ln \frac{4}{3}}\right)^\frac{1}{N}-1\right).
\end{align}
Note that, since $(1+Nx)^{\frac{1}{N}}<1+x$ for $x\geq 0$, inequality~\eqref{zeta2} implies~\eqref{zeta}; namely, 
\begin{align*}
0<\zeta<\frac{\sqrt{\ln 2}}{4}\sqrt{\ln \frac{4}{3}}<\frac{1}{4}.
\end{align*}

Let $u$ and $\varphi$ as in~\eqref{phi:def} and~\eqref{u}; namely, $\varphi\in C^\infty_c(\R)$ is an even function such that
\begin{align*}
\varphi=1\quad\text{ in $(-1-\zeta,1+\zeta)$},\qquad 
\varphi=0\quad \text{ in $\R\backslash (-1-2\zeta,1+2\zeta)$,}\qquad
0\leq \varphi \leq 1\quad \text{ in $\R$}
\end{align*}
and $u(x):=(-\ln \frac{x}{2})^{-\frac{1}{2}}\varphi(x)\chi_{[0,\infty)}(x)$ for $x\in\R.$  Let
\begin{align}\label{V}
V \in L^1_{loc}(\R^N)\quad \text{ be given by }\quad V(x):=u(x_1)=\frac{\varphi(x_1)}{\sqrt{-\ln \frac{x_1}{2}}}\chi_{[0,\infty)}(x_1),
\end{align}
see Figure~\ref{fig1} (right).  For $\eps\in(0,\frac{1}{4})$, let
\begin{align*}
x_\eps=(\eps,0,\ldots,0)\in\R^N\quad \text{ and }\quad U_\eps:=\{y\in B_1(0)\::\: y_1\in(-\eps,\eps)\}.
\end{align*}
For $N\geq 1$, we use $\sigma_N$ to denote the surface measure of the sphere in $\R^N$, namely, $\sigma_N=\frac{N\pi^\frac{N}{2}}{\Gamma(\frac{N}{2}+1)}$.

We show first some auxiliary lemmas.
\begin{lemma}\label{J1}
\begin{align*}
J_1=J_1(\eps):=\int_{U_\eps} \frac{V(x_\eps)-V(y+x_\eps)}{|y|^N}\ dy=o(1)\qquad \text{ as }\eps\to 0.
\end{align*}
\end{lemma}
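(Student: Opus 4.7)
The plan is to reduce the $N$-dimensional integral to the one-dimensional estimate already handled in Lemma \ref{J1:s} by slicing. Since $V(x_\eps) - V(y + x_\eps) = u(\eps) - u(y_1 + \eps)$ depends only on $y_1$ and $U_\eps = \{(y_1, y') \in \R \times \R^{N-1} : y_1 \in (-\eps, \eps),\ |y'|^2 < 1 - y_1^2\}$, Fubini gives
\begin{align*}
J_1(\eps) = \int_{-\eps}^{\eps} [u(\eps) - u(y_1 + \eps)]\, K_N(y_1)\, dy_1, \qquad K_N(y_1) := \int_{|y'| < \sqrt{1 - y_1^2}} \frac{dy'}{(y_1^2 + |y'|^2)^{N/2}}.
\end{align*}

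The key step is to show that $K_N(y_1) \leq C_N / |y_1|$ for some constant $C_N < \infty$ independent of $y_1$. Passing to polar coordinates in $\R^{N-1}$ (with $\omega_{N-1}$ the surface measure of the corresponding unit sphere) and rescaling $r = |y_1|\, s$ yields
\begin{align*}
K_N(y_1) = \frac{\omega_{N-1}}{|y_1|}\int_0^{\sqrt{1 - y_1^2}/|y_1|} \frac{s^{N-2}}{(1 + s^2)^{N/2}}\, ds \leq \frac{C_N}{|y_1|},
\end{align*}
where $C_N := \omega_{N-1}\int_0^{\infty} s^{N-2}(1 + s^2)^{-N/2}\, ds$. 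This constant is finite precisely because $N \geq 2$: near $0$ the integrand behaves like $s^{N-2}$ (integrable since $N - 2 \geq 0$), and as $s \to \infty$ it behaves like $s^{-2}$.

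Combining the two displays,
\begin{align*}
|J_1(\eps)| \leq C_N \int_{-\eps}^{\eps} \frac{|u(\eps) - u(y_1 + \eps)|}{|y_1|}\, dy_1 = o(1) \qquad \text{as } \eps \to 0^+,
\end{align*}
where the final equality is exactly the one-dimensional estimate from Lemma \ref{J1:s}. There is no genuine obstacle: the argument is a clean slicing reduction, and the only subtlety worth checking is the uniform bound $K_N(y_1) = O(1/|y_1|)$, which rests on the integrability of $s^{N-2}(1+s^2)^{-N/2}$ on $[0,\infty)$ and is secured by the hypothesis $N \geq 2$ of this subsection.
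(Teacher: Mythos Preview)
Your argument is correct. The slicing identity, the bound $K_N(y_1)\le C_N/|y_1|$ via the change of variables $r=|y_1|s$, and the appeal to Lemma~\ref{J1:s} are all valid; the finiteness of $C_N=\omega_{N-1}\int_0^\infty s^{N-2}(1+s^2)^{-N/2}\,ds$ for $N\ge 2$ is exactly the point that makes the reduction work (and the formula remains valid for $N=2$ with the convention $\omega_1=2$).

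The paper's proof uses the same ingredients but organizes them differently: instead of invoking the one-dimensional Lemma~\ref{J1:s}, it essentially reproves it in $N$ dimensions by splitting $y_1\in(-\eps,-\eps/2)$ and $y_1\in(-\eps/2,\eps)$ and handling each piece directly (monotonicity of $u$ for the first, the mean value theorem and monotonicity of $|u'|$ for the second), with the same kernel integral $\int_0^\infty r^{N-2}(1+r^2)^{-N/2}\,dr$ appearing inside each estimate. Your version is more modular: you isolate the kernel bound once, then defer all the behavior of $u$ to the already-proved one-dimensional lemma. Both routes are short; yours has the advantage of avoiding any repetition of the $1$D argument.
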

\begin{proof}
Let $B:=\{y'\in \R^{N-1}\::\: |y'|<1\}$. Using that $u$ is increasing in $(0,\eps)$ for $\eps>0$ small and that $u(0)=0$,
\begin{align*}
J_{1,1}&:=
\int_{B}\int_{-\eps}^{-\frac{\eps}{2}}\frac{|u(\eps)-u(y_1+\eps)|}{(y_1^2+|y'|^2)^{\frac{N}{2}}}\ dy_1\,dy'
\leq u(\eps)\int_{B} \frac{\eps}{2}\left(\frac{\eps^2}{4}+|y'|^2\right)^{-\frac{N}{2}}\,dy'\\
&=\frac{\eps}{2}u(\eps)\sigma_{N-1}\int_{0}^1 \left(\frac{\eps^2}{4}+r^2\right)^{-\frac{N}{2}}r^{N-2}\,dr
\leq u(\eps)\sigma_{N-1}\int_{0}^\infty \left(1+r^2\right)^{-\frac{N}{2}}r^{N-2}\,dr=o(1)
\end{align*}
as $\eps\to 0$. Furthermore, using that $t\mapsto|u'(t)|$ is decreasing in $(0,\frac{1}{3})$,
\begin{align*}
J_{1,2}
&:=\int_{B}\int_{-\frac{\eps}{2}}^\eps\frac{\left|u(\eps)-u(y_1+\eps)\right|}{(y_1^2+|y'|^2)^{\frac{N}{2}}}\ dy_1\,dy'
\leq \int_{B}\int_{-\frac{\eps}{2}}^\eps \int_0^1 |u'(ty_1+\eps)|\frac{y_1}{(y_1^2+|y'|^2)^{\frac{N}{2}}}\,dt\, dy_1\,dy'\\
&\leq |u'(\tfrac{\eps}{2})|\int_{B}\int_{-\frac{\eps}{2}}^\eps \frac{y_1}{(y_1^2+|y'|^2)^{\frac{N}{2}}}\, dy_1\,dy'
 =\sigma_{N-1}|u'(\tfrac{\eps}{2})|\int_{0}^1\int_{-\frac{\eps}{2}}^\eps \frac{y_1}{(y_1^2+r^2)^{\frac{N}{2}}}r^{N-2}\, dy_1\,dr\\
 &\leq \sigma_{N-1}\eps|u'(\tfrac{\eps}{2})|\int_{0}^\infty \int_{-\frac{1}{2}}^1 \frac{y_1\,r^{N-2}}{(y_1^2+r^2)^{\frac{N}{2}}}\, dy_1\,dr
 =\frac{3}{2}\sigma_{N-1}\eps|u'(\tfrac{\eps}{2})|\int_{0}^\infty \frac{r^{N-2}}{(1+r^2)^{\frac{N}{2}}}\,dr
 =o(1)
\end{align*}
as $\eps\to 0^+$, where we used that $\eps|u'(\tfrac{\eps}{2})|\to 0$ as $\eps\to 0^+$. Then,
\begin{align}\label{lem211}
\int_{U_\eps} \frac{|V(x_\eps)-V(y+x_\eps)|}{|y|^N}\ dy\leq J_{1,1}+J_{1,2}=o(1)\qquad \text{ as $\eps\to 0$}
\end{align}
 and this ends the proof.
\end{proof}

For $\eps\in(0,\frac{1}{4})$ and $\eta\in\left[\frac{1}{\sqrt{2}},1\right]$, let
\begin{align*}
Q_{\eps,\eta}&:=\{y=(y_1,y')\in \R^N\::\: y_1\in(\eps,\eta),\ |y'|<\eta\}.
\end{align*}
Note that $Q_{\eps,\frac{1}{\sqrt{2}}}\subset B_1$ and $\{y\in B_1\::\: y_1>\eps\}\subset Q_{\eps,1}$.

\begin{lemma}\label{J21:lem}
For $\eps\in(0,\frac{1}{4})$ and $\eta\in[\frac{1}{\sqrt{2}},1]$, let
\begin{align*}
J_{21}(\eta):=\int_{Q_{\eps,\eta}} \frac{V(y)}{|y|^N}\ dy>0.
\end{align*}
Then $J_{21}(\eta)=\sigma_N\sqrt{-\ln\eps}+O(1)$ and  $J_{21}(\eta)<\sigma_N\sqrt{-\ln \eps}-\sigma_N\sqrt{\ln 2-\ln \eta}+o(1)$ as $\eps\to 0$.
\end{lemma}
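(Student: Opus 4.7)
The plan is to apply Fubini to reduce $J_{21}(\eta)$ to a one-dimensional integral and then exploit the antiderivative behind~\eqref{a1}. Since $V(y)=u(y_1)$ depends only on $y_1$, writing $y'=(y_2,\dots,y_N)\in\R^{N-1}$ gives
\begin{align*}
J_{21}(\eta) = \int_\eps^\eta u(y_1)\, F_\eta(y_1)\, dy_1, \qquad F_\eta(y_1) := \int_{|y'|<\eta}\frac{dy'}{(y_1^2+|y'|^2)^{N/2}}.
\end{align*}
The rescaling $y'=y_1 z'$ yields $F_\eta(y_1)=y_1^{-1}\bigl(\omega_N/2 - g(y_1/\eta)\bigr)$, where $g(s):=\int_{|z'|>1/s}(1+|z'|^2)^{-N/2}\,dz'$. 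The key identification $\int_{\R^{N-1}}(1+|z'|^2)^{-N/2}\,dz'=\omega_N/2$ follows from a standard beta-function computation (polar coordinates in $\R^{N-1}$ and the substitution $r^2=u$), and the tail bound $r^{N-2}(1+r^2)^{-N/2}=O(r^{-2})$ at infinity gives $g(s)=O(s)$ as $s\to 0^+$, so that $g(s)/s$ is uniformly bounded on $(0,1]$.

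For the strict upper bound, the strict positivity $g>0$ forces $F_\eta(y_1)<\omega_N/(2y_1)$. Since $\varphi\equiv 1$ on $(\eps,\eta)\subset(0,1+\zeta)$, we have $u(y_1)=1/\sqrt{-\ln(y_1/2)}$ there, and the antiderivative behind~\eqref{a1} (applied with upper limit $\eta$ instead of $1$) yields
\begin{align*}
J_{21}(\eta) < \frac{\omega_N}{2}\int_\eps^\eta\frac{dy_1}{y_1\sqrt{-\ln(y_1/2)}} = \omega_N\sqrt{-\ln\eps+\ln 2}-\omega_N\sqrt{\ln 2-\ln\eta}.
\end{align*}
The elementary asymptotic $\sqrt{-\ln\eps+\ln 2}=\sqrt{-\ln\eps}+\frac{\ln 2}{2\sqrt{-\ln\eps}}+o(1/\sqrt{-\ln\eps})=\sqrt{-\ln\eps}+o(1)$ as $\eps\to 0^+$ then delivers the second assertion.

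For the equality $J_{21}(\eta)=\omega_N\sqrt{-\ln\eps}+O(1)$, I would split
\begin{align*}
J_{21}(\eta) = \frac{\omega_N}{2}\int_\eps^\eta\frac{u(y_1)}{y_1}\,dy_1 - \int_\eps^\eta\frac{u(y_1)\,g(y_1/\eta)}{y_1}\,dy_1.
\end{align*}
By the same explicit evaluation, the first term equals $\omega_N\sqrt{-\ln\eps+\ln 2}-\omega_N\sqrt{\ln 2-\ln\eta}=\omega_N\sqrt{-\ln\eps}+O(1)$, using that $\sqrt{\ln 2-\ln\eta}$ is bounded for $\eta\in[1/\sqrt 2,1]$. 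For the error term, the substitution $t=y_1/\eta$ gives $\int_{\eps/\eta}^1 u(\eta t)\,g(t)/t\,dt$, which is bounded by $\|u\|_\infty\sup_{(0,1]} g(t)/t<\infty$. The only delicate step is the scaling-plus-tail analysis of $F_\eta(y_1)$; beyond that, the argument is bookkeeping of lower-order terms involving $\ln 2$ and $\eta$.
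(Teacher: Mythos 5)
Your proof is correct and takes a genuinely different route from the paper's. The paper first passes to polar coordinates in $y'$, then performs the successive substitutions $t=\rho^{-1}y_1$ and $z=-\ln\rho$, applies Fubini in the resulting $(t,z)$-plane (with the bookkeeping captured by the picture in Figure~\ref{fig:2}), and only then separates the $t$- and $z$-integrations; the strict inequality is obtained by showing that the remainder $R(\eta)$ is strictly positive. You instead apply Fubini at the outset to reduce to a one-dimensional integral in $y_1$, compute the angular integral $F_\eta(y_1)$ in closed form as $y_1^{-1}\bigl(\omega_N/2 - g(y_1/\eta)\bigr)$ via the rescaling $y'=y_1z'$ and the beta-function identity $\int_{\R^{N-1}}(1+|z'|^2)^{-N/2}\,dz'=\omega_N/2$ (which is the same Gamma-function identity the paper uses, just invoked one step earlier), and then evaluate the leading term $\frac{\omega_N}{2}\int_\eps^\eta u(y_1)\,y_1^{-1}\,dy_1$ exactly by the antiderivative behind~\eqref{a1}. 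Your $\int_\eps^\eta u(y_1)\,g(y_1/\eta)\,y_1^{-1}\,dy_1$ plays the role of the paper's $R(\eta)+o(1)$: it is positive (giving the strict upper bound immediately) and bounded uniformly in $\eta\in[1/\sqrt{2},1]$ (giving the $O(1)$ equality). The net gain of your route is that the positive correction is isolated at the level of the angular integral rather than through a two-dimensional Fubini decomposition, which makes the sign and size of the error transparent; you do tacitly use $N\ge 2$ (so that $\R^{N-1}$ is nontrivial), but that is exactly the setting of this lemma.
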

\begin{proof}
For $\eps\in(0,\frac{1}{4})$ and $\eta\in[\frac{1}{\sqrt{2}},1]$, let
\begin{align*}
R(\eta):=\sigma_{N-1}\int_{\frac{\eps}{\eta}}^{1}|t^2+1|^{-\frac{N}{2}} 
\int_{\ln t-\ln\eta}^{-\ln \eta} \frac{1}{\sqrt{\ln 2+z-\ln t}}\, dz\,dt>0.
\end{align*}

\setlength{\unitlength}{\textwidth}
\begin{figure}[ht]
\begin{minipage}{0.45\textwidth}
\begin{center}
\begin{picture}(.4,.2)
\put(0,0){\includegraphics[width=.3\paperwidth]{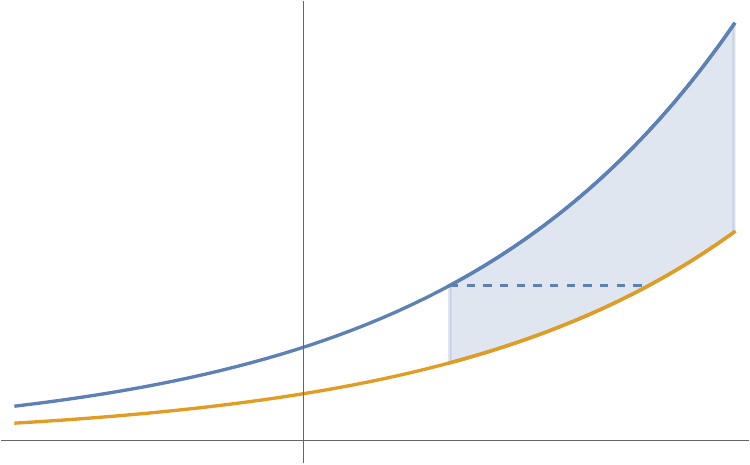}}
\put(.4,0){$z$}
\put(.2,-0.01){$-\ln \eta$}
\put(0.14,0.09){$1$}
\put(0.155,0.09){-}
\put(.232,0.008){$\shortmid$}
\put(0.145,0.23){$t$}
\put(0.28,0.18){$\eta e^z$}
\put(0.3,0.06){$\eps e^z$}
\end{picture}
\end{center}
\end{minipage}
\begin{minipage}{0.5\textwidth}
\begin{center}
\begin{picture}(.4,.25)
\put(0,0){\includegraphics[width=.3\paperwidth]{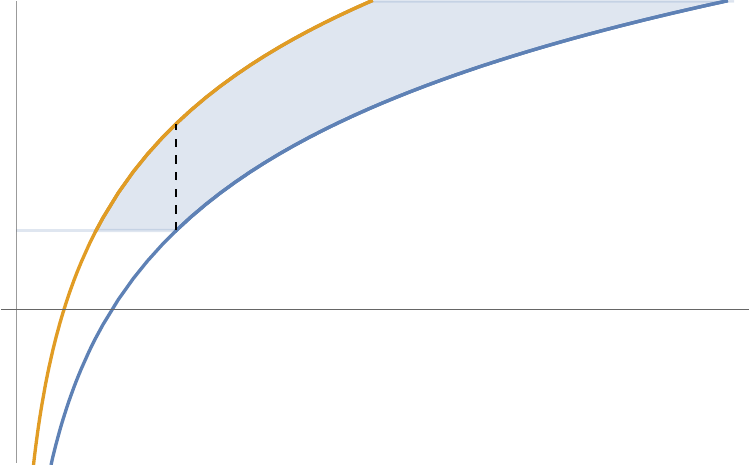}}
\put(.4,0.065){$t$}
\put(0.02,0.23){$z$}
\put(-0.06,0.12){$-\ln\eta$}
\put(0.06,0.21){$\ln t- \ln \eps$}
\put(0.29,0.18){$\ln t- \ln \eta$}
\put(0.089,0.075){$\shortmid$}
\put(0.046,0.075){$\shortmid$}
\put(0.049,0.05){$\frac{\eps}{\eta}$}
\put(0.084,0.05){$1$}
\end{picture}
\end{center}
\end{minipage}

\caption{Domains of integration.}\label{fig:2}
\end{figure}

Let $B_\eta:=\{y'\in \R^{N-1}\::\: |y'|<\eta\}$. Using spherical coordinates, changes of variables ($t=\rho^{-1} y_1$,
 $dt=\rho^{-1}\, dy_1$, 
 $z=-\ln\rho$,
 $\rho=e^{-z}$, $dz = -\rho^{-1}\, d\rho$),
 and Fubini's theorem (see Figure~\ref{fig:2}),
\begin{align*}
J_{21}(\eta)
&=\int_{B_\eta}\int_\eps^\eta  \frac{(\ln 2-\ln y_1)^{-\frac{1}{2}}}{|y|^N}\, dy_1\, dy'
=\sigma_{N-1} \int_{0}^\eta\int_\eps^\eta \frac{(\ln 2-\ln y_1)^{-\frac{1}{2}}}{|y_1^2+\rho^2|^\frac{N}{2}}\rho^{N-2}\, dy_1\, d\rho\\
&=\sigma_{N-1}\int_0^\eta  \rho^{-1}  \int_{\eps\rho^{-1}}^{\eta\rho^{-1}} \frac{(\ln 2-\ln(t\rho))^{-\frac{1}{2}}}{|t^2+1|^\frac{N}{2}}\, dt\,d\rho\\
&=\sigma_{N-1}\int_{-\ln \eta}^\infty\int_{\eps e^z}^{\eta e^z} \frac{1}{|t^2+1|^\frac{N}{2}\sqrt{\ln 2-\ln t+z}}\, dt\,dz\\
&=\sigma_{N-1}\int_{\frac{\eps}{\eta}}^\infty |t^2+1|^{-\frac{N}{2}} \int_{\ln t-\ln \eta}^{\ln t-\ln\eps} \frac{1}{\sqrt{\ln 2-\ln t+z}}\, dz\,dt-R(\eta)\\
&=\sigma_{N-1}\int_{\frac{\eps}{\eta}}^\infty |t^2+1|^{-\frac{N}{2}} \left(\int_{-\ln \eta}^{-\ln\eps} \frac{1}{\sqrt{\ln 2+z}}\, dz\right)\,dt-R(\eta)\\
&=2\sigma_{N-1}\left(\sqrt{\ln 2-\ln \eps}-\sqrt{\ln 2-\ln \eta}\right)\int_{\eta^{-1}\eps}^\infty |t^2+1|^{-\frac{N}{2}}\, dt-R(\eta)\\
&=\left(\sqrt{\ln 2-\ln \eps}-\sqrt{\ln 2-\ln \eta}\right)\left(\sigma_{N-1}\frac{\sqrt{\pi } \Gamma \left(\frac{N-1}{2}\right)}{\Gamma \left(\frac{N}{2}\right)}+o(1)\right)-R(\eta)\\
&=\sigma_N\sqrt{-\ln \eps}-\sigma_N\sqrt{\ln 2-\ln \eta}-R(\eta)+o(1),
\end{align*}
because
\begin{align*}
\sigma_{N-1}\frac{\sqrt{\pi } \Gamma \left(\frac{N-1}{2}\right)}{\Gamma \left(\frac{N}{2}\right)}
&=2\frac{\pi^{\frac{N}{2}}}{\Gamma(\frac{N}{2})}=N\frac{\pi^\frac{N}{2}}{\Gamma(\frac{N}{2}+1)}=\sigma_N.
\end{align*}
The claim now follows because
\begin{align*}
0&<\sigma_{N-1}\int_{\frac{1}{2}}^{1}|t^2+1|^{-\frac{N}{2}} 
\int_{\ln t-\ln\eta}^{-\ln \eta} \frac{1}{\sqrt{\ln 2+z-\ln t}}\, dz\,dt\\
&<R(\eta)=\sigma_{N-1}\int_{\frac{\eps}{\eta}}^{1}|t^2+1|^{-\frac{N}{2}} 
\int_{\ln t-\ln\eta}^{-\ln \eta} \frac{1}{\sqrt{\ln 2+z-\ln t}}\, dz\,dt\\
&\leq \sigma_{N-1}\int_{0}^{1}
\int_{-\ln\eta}^{-\ln t-\ln \eta} \frac{1}{\sqrt{\ln 2+z}}\, dz\,dt
=2\sigma_{N-1}\int_{0}^{1}
\sqrt{\ln 2-\ln\left(\eta  t\right)}-\sqrt{\ln 2 -\ln\eta}\,dt
=O(1)
\end{align*}
as $\eps\to 0.$
\end{proof}

\begin{lemma}\label{J22:lem}
For $\eps\in(0,\frac{1}{4})$ and $\eta\in[\frac{1}{\sqrt{2}},1]$,
\begin{align*}
J_{22}(\eta):=\int_{Q_{\eps,\eta}} \frac{V(y)-V(x_\eps+y)}{|y|^N}\ dy=o(1)\qquad \text{ as }\eps\to 0.
\end{align*}
\end{lemma}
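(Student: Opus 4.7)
The plan is to reduce $J_{22}(\eta)$ to a one-dimensional integral in $y_1$ using Fubini's theorem and a radial change of variables in $y'$, exactly as in the proof of Lemma~\ref{J21:lem}. Writing $y=(y_1,y')$ with $r=|y'|$ so that $dy'=\omega_{N-1} r^{N-2}\,dr$, swapping the order of integration, and substituting $s=r/y_1$ in the inner integral yields
\begin{align*}
J_{22}(\eta)=\omega_{N-1}\int_\eps^\eta \frac{u(y_1)-u(\eps+y_1)}{y_1}\,g(\eta/y_1)\,dy_1,\qquad g(r):=\int_0^r\frac{s^{N-2}}{(s^2+1)^{N/2}}\,ds.
\end{align*}
Since $g(\eta/y_1)\le g(\infty)<\infty$ uniformly in $y_1$, the $N$-dimensional estimate reduces to controlling the one-dimensional integral $\int_\eps^\eta |u(\eps+y_1)-u(y_1)|/y_1\,dy_1$. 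The integrand is pointwise nonnegative because $u$ is increasing on $(0,1+\zeta)$.

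I would then split the $y_1$-integration at $2\eps$ and $1/4$. On $(\eps,2\eps)$ both $u(y_1)$ and $u(\eps+y_1)$ are bounded by $u(3\eps)=O((\ln(1/\eps))^{-1/2})$ and $\int_\eps^{2\eps} dy_1/y_1=\ln 2$, so this contribution is $O((\ln(1/\eps))^{-1/2})=o(1)$. On $(1/4,\eta)$ the functions $u$ and $u'$ are smooth and bounded on the compact set $[1/4,\eta+\eps]\subset(0,1+\zeta)$, so $|u(\eps+y_1)-u(y_1)|\le C\eps$ and this piece contributes $O(\eps)=o(1)$.

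The main task, and the main obstacle, is the middle interval $(2\eps,1/4)$. The mean value theorem together with the fact (recorded in the proof of Lemma~\ref{J1}) that $|u'|$ is decreasing on $(0,1/3)$ gives $0\le u(\eps+y_1)-u(y_1)\le \eps u'(y_1)$, and the explicit formula $u'(y_1)=[2y_1(\ln(2/y_1))^{3/2}]^{-1}$ for small $y_1$ reduces the question to showing
\begin{align*}
\eps\int_{2\eps}^{1/4}\frac{dy_1}{y_1^2(\ln(2/y_1))^{3/2}}=o(1)\qquad\text{as }\eps\to0^+.
\end{align*}
After the substitution $v=\ln(2/y_1)$ the integral becomes a constant multiple of $\int_{\ln 8}^{\ln(1/\eps)} e^v v^{-3/2}\,dv$; integration by parts shows this behaves like $e^{\ln(1/\eps)}(\ln(1/\eps))^{-3/2}=\eps^{-1}(\ln(1/\eps))^{-3/2}$ as $\eps\to 0^+$, so the whole expression is $O((\ln(1/\eps))^{-3/2})=o(1)$. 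The subtlety here is that a naive bound of the type $1/y_1^2\le C/\eps^2$ on $(2\eps,1/4)$ only yields $O(1)$; it is precisely the sharper exponential-in-$v$ asymptotics that lets the $\eps$ prefactor deliver the required decay.
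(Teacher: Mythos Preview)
Your proof is correct but takes a genuinely different route from the paper. The paper performs the same sequence of substitutions as in Lemma~\ref{J21:lem} (polar coordinates in $y'$, then $t=\rho^{-1}y_1$, then $z=-\ln\rho$) to rewrite $-J_{22}(\eta)$ as a double integral whose inner integrand has exactly the structure of the one-dimensional quantity $I(\eps)$ from Lemma~\ref{J2:s:aux}; it then simply invokes the generalized dominated convergence argument of that lemma. You instead integrate out $y'$ first via the substitution $s=r/y_1$, land on a single one-dimensional integral with the bounded weight $g(\eta/y_1)\le g(\infty)$, and finish by an explicit three-region splitting with a sharp asymptotic for $\int e^v v^{-3/2}\,dv$ on the middle piece. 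Your argument is more self-contained (it does not recycle Lemma~\ref{J2:s:aux}) and even yields a quantitative rate $O((\ln(1/\eps))^{-1/2})$, at the cost of the extra endpoint calculation on $(2\eps,1/4)$; the paper's route is shorter precisely because that dominated-convergence lemma is already in hand.

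One small point worth making explicit: your claim that the integrand has a sign uses $u$ increasing on $(0,1+\zeta)$, which in turn needs $\eps+y_1<1+\zeta$; since $y_1\le\eta\le 1$ this requires $\eps<\zeta$, harmless for an $o(1)$ statement but worth stating.
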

\begin{proof}
Arguing as in Lemma~\ref{J21:lem}, we have that
\begin{align*}
    &-J_{22}(\eta)
    =\sigma_{N-1} \int_0^\eta  \rho^{-1}  \int_{\eps\rho^{-1}}^{\eta \rho^{-1}} 
    \left(
    \frac{1}{
    \sqrt{\ln 2-\ln (t\rho+\eps)}}
    -
    \frac{1}{\sqrt{\ln 2-\ln(t\rho)}}\right)|t^2+1|^{-\frac{N}{2}}\, dt\,d\rho\\
    &=\sigma_{N-1}\int_{-\ln \eta}^\infty\int_{\eps e^z}^{\eta e^z} 
    \left(
    \frac{1}{\sqrt{(\ln 2-\ln(te^{-z}+\eps)}}
    -\frac{1}{\sqrt{\ln 2+z-\ln t}}\right)|t^2+1|^{-\frac{N}{2}}\, dt\,dz.
\end{align*}
Arguing as in the proof of Lemma~\ref{J2:s:aux}, this last term tends to zero as $\eps \to 0$.
\end{proof}

\begin{lemma}\label{J2}
It holds that
\begin{align}\label{J2c1}
J_2=J_2(\eps):=-\int_{B_1\backslash U_\eps} \frac{V(y+x_\eps)}{|y|^N}\ dy=-\sigma_N\sqrt{-\ln \eps}+O(1)\qquad \text{ as }\eps\to 0,
\end{align}
and
\begin{align}\label{J2c2}
J_2>-\sigma_N\sqrt{-\ln\eps}+\sigma_N\sqrt{\ln 2}+o(1)\qquad \text{ as }\eps\to 0.
\end{align}
\end{lemma}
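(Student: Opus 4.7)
The plan is to reduce the integral $J_2$ to integrals of the form $J_{21}$ and $J_{22}$ treated in Lemmas \ref{J21:lem} and \ref{J22:lem} by means of a sandwich argument on the domain of integration.

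First I would identify the effective support of the integrand. Since $V(y+x_\eps)=u(y_1+\eps)$ vanishes whenever $y_1+\eps\le 0$ and, by the cutoff $\varphi$, also whenever $y_1+\eps\ge 1+2\zeta$, and because $y\in B_1$ forces $y_1+\eps<1+\tfrac{1}{4}<1+2\zeta$, the integrand is supported on $\{y\in B_1:y_1\geq\eps\}$. Hence
\begin{equation*}
-J_2=\int_{\{y\in B_1:\,y_1\geq \eps\}}\frac{u(y_1+\eps)}{|y|^N}\, dy.
\end{equation*}
Next, using the inclusions $Q_{\eps,1/\sqrt{2}}\subset\{y\in B_1:y_1\geq\eps\}\subset Q_{\eps,1}$ (the first because $Q_{\eps,1/\sqrt{2}}\subset B_1$, the second because $y\in B_1$ with $y_1\geq\eps$ forces $|y'|<1$) and the nonnegativity of $u$, this yields
\begin{equation*}
J_{21}(\tfrac{1}{\sqrt{2}})-J_{22}(\tfrac{1}{\sqrt{2}})
\ \leq\ -J_2\ \leq\ J_{21}(1)-J_{22}(1),
\end{equation*}
where I have used that $\int_{Q_{\eps,\eta}}\frac{u(y_1+\eps)}{|y|^N}\,dy=J_{21}(\eta)-J_{22}(\eta)$.

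To establish \eqref{J2c1}, I would invoke Lemma \ref{J21:lem}, which gives $J_{21}(\eta)=\omega_N\sqrt{-\ln\eps}+O(1)$ for $\eta\in\{1/\sqrt{2},1\}$, together with Lemma \ref{J22:lem}, which gives $J_{22}(\eta)=o(1)$. Substituting into both sides of the sandwich above yields $-J_2=\omega_N\sqrt{-\ln\eps}+O(1)$, which is \eqref{J2c1}. To obtain the sharper lower bound \eqref{J2c2}, I would apply the upper estimate of Lemma \ref{J21:lem} with the specific choice $\eta=1$ (so that $\sqrt{\ln 2-\ln\eta}=\sqrt{\ln 2}$), obtaining
\begin{equation*}
-J_2 \leq J_{21}(1)-J_{22}(1) < \omega_N\sqrt{-\ln\eps}-\omega_N\sqrt{\ln 2}+o(1),
\end{equation*}
where I absorbed $-J_{22}(1)=o(1)$ into the remainder. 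Rearranging produces $J_2>-\omega_N\sqrt{-\ln\eps}+\omega_N\sqrt{\ln 2}+o(1)$, as required.

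Since the technical work sits inside Lemmas \ref{J21:lem} and \ref{J22:lem}, the only real obstacle here is making sure the two inclusions of domains are correct and that the strict inequality in Lemma \ref{J21:lem} is transferred faithfully through the sign change. No delicate estimate beyond an inclusion of sets and an invocation of the preceding two lemmas is needed.
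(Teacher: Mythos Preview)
Your argument is correct and follows essentially the same route as the paper: sandwich the effective domain $\{y\in B_1:y_1\geq\eps\}$ between $Q_{\eps,1/\sqrt 2}$ and $Q_{\eps,1}$, express each box integral as $J_{21}(\eta)-J_{22}(\eta)$, and invoke Lemmas~\ref{J21:lem} and~\ref{J22:lem}. For \eqref{J2c2} your proof is identical to the paper's; for \eqref{J2c1} you use both sides of the sandwich, whereas the paper uses only the inner box $Q_{\eps,1/\sqrt 2}$ and bounds the leftover $B_1\setminus(U_\eps\cup Q_{\eps,1/\sqrt 2})$ directly via $|y|\geq 1/\sqrt 2$ there---your version is slightly cleaner but equivalent.

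One small slip to fix: the claim ``$y\in B_1$ forces $y_1+\eps<1+\tfrac14<1+2\zeta$'' is false, since the paper only requires $\zeta<\tfrac14$ (so $2\zeta$ need not exceed $\tfrac14$). Fortunately this remark is unnecessary: the identity $-J_2=\int_{\{y\in B_1:y_1\geq\eps\}}\frac{u(y_1+\eps)}{|y|^N}\,dy$ only uses that $u(y_1+\eps)=0$ for $y_1\leq-\eps$, which handles the other half of $B_1\setminus U_\eps$. You should simply delete the clause about the cutoff.
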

\begin{proof}
By Lemmas~\ref{J21:lem} and~\ref{J22:lem},
\begin{align*}
-J_2<\int_{Q_{\eps,1}}\frac{V(y+x_\eps)}{|y|^N}\ dy&=J_{21}(1)-J_{22}(1)<\sigma_N\sqrt{-\ln\eps}-\omega\sqrt{\ln 2}+o(1)\qquad \text{ as }\eps\to 0,
 \end{align*}
 and~\eqref{J2c2} follows. On the other hand,
 \begin{align}\label{J2c1:1}
 J_2=-\int_{Q_{\eps,\frac{1}{\sqrt{2}}}}\frac{V(y+x_\eps)}{|y|^N}\ dy-\int_{B_1\backslash (U_\eps\cup Q_{\eps,\frac{1}{\sqrt{2}}})}\frac{V(y+x_\eps)}{|y|^N}\ dy.
 \end{align}
 Note that, by Lemmas~\ref{J21:lem} and~\ref{J22:lem},
 \begin{align}\label{J2c1:2}
 0<\int_{Q_{\eps,\frac{1}{\sqrt{2}}}}\frac{V(y+x_\eps)}{|y|^N}\ dy
 =J_{21}(1/\sqrt{2})-J_{22}(1/\sqrt{2})
 =\sigma_N\sqrt{-\ln\eps}+O(1)
 \end{align}
 as $\eps\to 0,$ whereas
 \begin{align}\label{J2c1:3}
0<\int_{B_1\backslash (U_\eps\cup Q_{\eps,\frac{1}{\sqrt{2}}})}\frac{V(y+x_\eps)}{|y|^N}\ dy
\leq 2^\frac{N}{2} \|V\|_\infty |B_1|.
 \end{align}
 But then,~\eqref{J2c1:1},~\eqref{J2c1:2}, and~\eqref{J2c1:3} imply~\eqref{J2c1}.
\end{proof}

\begin{theorem}\label{thm:bdry} Let $V$ be given by~\eqref{V}, then
\begin{align}\label{Vcl}
J:=\int_{B_1(x_\eps)} \frac{V(x_\eps)-V(y)}{|y-x_\eps|^N}\ dy>\sigma_N \sqrt{\ln 2}+o(1)
\quad \text{ and }\quad 
J=O(1)
\qquad \text{ as }\eps\to 0.
\end{align}
\end{theorem}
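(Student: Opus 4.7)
The plan is to mimic the one-dimensional argument of Theorem~\ref{thm:bdry:s}, now supported by the higher-dimensional lemmas already at hand. First, I would use the change of variables $z = y - x_\eps$ to write
\[
J = \int_{B_1}\frac{V(x_\eps) - V(z + x_\eps)}{|z|^N}\, dz,
\]
split the domain as $B_1 = U_\eps \cup (B_1\setminus U_\eps)$, and separate the two terms of the integrand on the larger piece:
\[
J = J_1 + V(x_\eps)\,I(\eps) + J_2,\qquad I(\eps):=\int_{B_1\setminus U_\eps}|z|^{-N}\, dz,
\]
with $J_1$ and $J_2$ as in Lemmas~\ref{J1} and~\ref{J2}. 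These two lemmas already supply $J_1 = o(1)$, the identity $J_2 = -\omega_N\sqrt{-\ln\eps} + O(1)$, and the strict lower bound $J_2 > -\omega_N\sqrt{-\ln\eps} + \omega_N\sqrt{\ln 2} + o(1)$ as $\eps\to 0^+$. What remains is to prove that $V(x_\eps)\,I(\eps) = \omega_N\sqrt{-\ln \eps} + o(1)$, so that the divergent $\sqrt{-\ln\eps}$ produced by the middle summand cancels the one in $J_2$.

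For $I(\eps)$, I would exploit the symmetry $z_1 \mapsto -z_1$ and pass to cylindrical coordinates followed by the rescaling $t = |z'|/z_1$, in the spirit of the proof of Lemma~\ref{J21:lem}, obtaining
\[
I(\eps) = 2\omega_{N-1}\int_\eps^1 \frac{dz_1}{z_1}\int_0^{\sqrt{1-z_1^2}/z_1}\frac{t^{N-2}}{(1+t^2)^{N/2}}\, dt.
\]
Writing the inner integral as $\int_0^\infty - \int_{\sqrt{1-z_1^2}/z_1}^\infty$ and recalling the identity $2\omega_{N-1}\int_0^\infty t^{N-2}(1+t^2)^{-N/2}\,dt = \omega_N$ already established inside the proof of Lemma~\ref{J21:lem}, the leading contribution becomes $\omega_N(-\ln\eps)$; the remainder is $O(1)$ because $\int_A^\infty t^{N-2}(1+t^2)^{-N/2}\,dt = O(1/A)$ as $A\to\infty$ and $\sqrt{1-z_1^2}/z_1 \ge 1/(2z_1)$ for $z_1\in(0,1/2)$. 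Hence $I(\eps) = \omega_N(-\ln\eps) + O(1)$.

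To pass to the advertised expansion for $V(x_\eps)\,I(\eps)$, I would multiply by $V(x_\eps) = (\ln 2 - \ln \eps)^{-1/2}$ and use the algebraic identity
\[
\frac{-\ln \eps}{\sqrt{\ln 2 - \ln \eps}} = \sqrt{\ln 2 - \ln \eps} - \frac{\ln 2}{\sqrt{\ln 2 - \ln \eps}} = \sqrt{-\ln \eps} + o(1)
\]
as $\eps \to 0^+$, together with $V(x_\eps)\cdot O(1) = o(1)$, to conclude $V(x_\eps)\,I(\eps) = \omega_N\sqrt{-\ln\eps} + o(1)$. Inserting this into $J = J_1 + V(x_\eps)I(\eps) + J_2$ and combining with Lemma~\ref{J2} yields simultaneously $J = O(1)$ (from \eqref{J2c1}) and $J > \omega_N\sqrt{\ln 2} + o(1)$ (from \eqref{J2c2}). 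The only delicate point is the bookkeeping required to extract exactly $\sqrt{-\ln\eps}$ rather than $\sqrt{\ln 2 - \ln \eps}$ from $V(x_\eps)I(\eps)$, so that the divergent pieces cancel to the desired order; the more substantial analytic work has already been packaged into Lemmas~\ref{J21:lem}--\ref{J2}.
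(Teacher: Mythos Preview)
Your proposal is correct and follows exactly the same approach as the paper: decompose $J=J_1+V(x_\eps)I(\eps)+J_2$, invoke Lemmas~\ref{J1} and~\ref{J2}, and then show $V(x_\eps)I(\eps)=\omega_N\sqrt{-\ln\eps}+o(1)$ so that the divergent terms cancel. The only difference is that you spell out the computation of $I(\eps)=\omega_N(-\ln\eps)+O(1)$ via cylindrical coordinates and the tail estimate, whereas the paper simply asserts the resulting identity $V(x_\eps)I(\eps)=\omega_N\frac{-\ln\eps}{\sqrt{-\ln(\eps/2)}}+o(1)$ in one line; your extra detail is harmless and in fact clarifies a step the paper leaves implicit.
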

\begin{proof}
Let $J_1$ and $J_2$ be as in Lemmas~\ref{J1} and~\ref{J2}.  By Lemma~\ref{J1},
\begin{align*}
J&:=\int_{B_1(x_\eps)} \frac{V(x_\eps)-V(y)}{|y-x_\eps|^N}\ dy
=J_1+J_2+V(x_\eps)\int_{B_1\backslash U_\eps} |y|^{-N}\ dy+o(1)\\
&=J_1+J_2+\sigma_N \frac{-\ln(\eps)}{\sqrt{-\ln \frac{\eps}{2}}}
=J_2+\sigma_N \sqrt{-\ln \eps}+o(1)
>\sigma_N \sqrt{\ln 2}+o(1)
\end{align*}
as $\eps\to 0$. The claim~\eqref{Vcl} now follows from Lemma~\ref{J2}.
\end{proof}

Let $\widetilde V \in L^1_{loc}(\R^N)$ be given by
\begin{align}\label{Vtilde}
\widetilde V(x)=V(x)\varphi(|x|)=\frac{\varphi(x_1)\varphi(|x|)}{\sqrt{-\ln \frac{x_1}{2}}}\chi_{(0,\infty)}(x_1),\qquad x\in\R^N,
\end{align}
where $\varphi$ is given in~\eqref{phi:def}.  Recall that $B_2^+$ is the half ball of radius 2 given in~\eqref{Omega:def}.
\begin{lemma}\label{V:lem:2}
Let $\widetilde V$ be given by~\eqref{Vtilde}, then $L_\Delta \widetilde V\in L^\infty(B_2^+)$.
\end{lemma}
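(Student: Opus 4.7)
The plan is to verify the hypotheses of Lemma~\ref{Ldelta} for $\widetilde V$. Boundedness of $\widetilde V$ is clear, and since $\varphi$ vanishes outside $(-1-2\zeta,1+2\zeta)$ with $1+2\zeta<\tfrac{3}{2}$, we have $\widetilde V\equiv 0$ outside $B_2^+$. Thus only the uniform principal value bound
\begin{align}\label{plan:star}
\sup_{z\in B_2^+}\left|\int_{B_1}\frac{\widetilde V(z)-\widetilde V(z+y)}{|y|^N}\, dy\right|<\infty
\end{align}
remains to be checked.

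To prove~\eqref{plan:star} I would write $\widetilde V=V\psi$ with $\psi(x):=\varphi(|x|)$. Since $\varphi$ is even and smooth, $\psi\in C^\infty_c(\R^N)$ is globally Lipschitz, and the algebraic decomposition
\begin{align*}
\widetilde V(z)-\widetilde V(z+y)
&= V(z)[\psi(z)-\psi(z+y)]+\psi(z)[V(z)-V(z+y)]\\
&\quad+[\psi(z+y)-\psi(z)][V(z)-V(z+y)]
\end{align*}
splits the integrand into three manageable pieces. The first and third pieces are pointwise bounded by a $z$-independent multiple of $|y|^{1-N}$ (using Lipschitz continuity of $\psi$ and boundedness of $V$), which is integrable on $B_1$; hence they contribute a uniformly bounded quantity.

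The heart of the argument is to bound the middle piece, which, since $\psi$ is bounded, reduces to showing
\begin{align}\label{plan:V}
\sup_{z\in B_2^+}\left|\int_{B_1}\frac{V(z)-V(z+y)}{|y|^N}\, dy\right|<\infty.
\end{align}
Because $V(x)=u(x_1)$ depends only on $x_1$, translation invariance in the $y'$-variable makes the integral in~\eqref{plan:V} depend only on $z_1\in(0,2)$. For $z_1\in(0,\delta_0]$ with $\delta_0>0$ small, this is precisely the $O(1)$ statement of Theorem~\ref{thm:bdry}. For $z_1\in[\delta_0,2)$, $u$ is smooth near $z_1$ (it belongs to $C^\infty((0,\infty))$, since $\varphi$ is smooth and vanishes smoothly at $1+2\zeta$); splitting $B_1=B_r\cup(B_1\setminus B_r)$ with $r$ small, Taylor expanding $u(z_1+y_1)-u(z_1)=y_1 u'(z_1)+O(y_1^2)$ on $B_r$ (where the linear term cancels by $y_1\mapsto -y_1$ symmetry of $|y|^{-N}$) and using the crude bound $|u(z_1)-u(z_1+y_1)|\leq 2\|u\|_\infty$ on the complement yields a uniform bound. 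Combining the three pieces establishes~\eqref{plan:star}, and Lemma~\ref{Ldelta} delivers $L_\Delta \widetilde V\in L^\infty(B_2^+)$.

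The main obstacle is the translation-invariance reduction of the $N$-dimensional estimate~\eqref{plan:V} to the one-dimensional asymptotic of Theorem~\ref{thm:bdry}; without it, one would have to redo the delicate computations of Section~\ref{ini:sec} for base points away from the $x_1$-axis. Luckily, the separable form $V(x)=u(x_1)$ makes the reduction automatic, so the whole difficulty is encapsulated in the one-dimensional content already established.
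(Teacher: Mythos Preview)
Your proof is correct and follows essentially the same approach as the paper: both reduce to the uniform principal-value bound~\eqref{plan:V} for $V$ via translation invariance and Theorem~\ref{thm:bdry} (for $z_1$ small) together with smoothness of $u$ on $(0,\infty)$ (for $z_1$ bounded away from $0$), and then pass to $\widetilde V=V\psi$ through a product decomposition. The only cosmetic difference is that the paper packages the product step via the Leibniz-type formula of Section~\ref{Leib:sec} (writing $L_\Delta\widetilde V=\psi L_\Delta V+V L_\Delta\psi-I(V,\psi)$ and bounding each term), whereas you carry out the equivalent algebraic decomposition directly at the integrand level and apply Lemma~\ref{Ldelta} to $\widetilde V$ rather than to $V$; your organization has the minor advantage of never invoking Lemma~\ref{Ldelta} on the non-compactly-supported $V$.
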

\begin{proof}
First observe that Theorem~\ref{thm:bdry} together with the translation invariance of $V$ $(V(x)=u(x_1))$ and the fact that $V\in C^\infty(\R^N_+)$ imply that 
\begin{align*}
\sup_{z\in B_2^+}\left|\int_{B_1}
\frac{
V(z)
-V(z+y)}{|y|^N}\, dy\right|<C
\end{align*}
for some $C>0$, as a simple argument by contradiction shows. From this estimate and Lemma~\ref{Ldelta} we deduce that $L_\Delta V \in L^\infty(B_2^+)$. On the other hand, by the Leibniz-type formula~\eqref{eq:leib_log},
\begin{equation*}
    L_\Delta \widetilde V(x) = L_\Delta V(x) \varphi(x) + V(x) L_\Delta \varphi(x) - I(V,\varphi)(x) \qquad \text{for all } x \in \R^N.
\end{equation*}
Thus, it suffices to show that $I(V,\varphi) \in L^\infty(B_2^+)$.  Observe that, for $x\in B_2^+$,
   \begin{align*}
        \left|I(V,\varphi)(x) \right| \le &\ c_{N}\int_{B_{1}(x)}\frac{\left| V(x)-V(y)\right| \left| \varphi(x)-\varphi(y)\right|}{|x-y|^N}dy\\ 
        & +c_{N} \int_{\R^N\setminus B_1(x)}\frac{\left| V(y)\varphi(y)-V(x)\varphi(y)-V(y)\varphi(x) \right|}{|x-y|^N}dy 
        +\rho_{N}V(x)\varphi(x) \\
        \leq \ &c_{N}\int_{B_{1}(x)}\frac{\left| V(x)-V(y)\right| \left| \varphi(x)-\varphi(y)\right|}{|x-y|^N}dy + c_N\int_{B_R(x)\setminus B_1(x)}\frac{\left| V(y)-V(x) \right|}{|x-y|^N} \varphi(y)dy\\ 
        &+c_{N} \varphi(x) \int_{\R^N\setminus B_1(x)}\frac{V(y)}{|x-y|^N} \, dy + \rho_{N}V(x)\varphi(x)=:A_1+A_2+A_3+A_4,
    \end{align*}
    for $R>1$ such that $B_1(x) \cup B_{1+2\zeta} \subset B_R(x)$, see~\eqref{phi:def}.  Since $V$ is bounded and $\varphi$ is smooth and bounded, it easily follows that $A_1$, $A_2$, and $A_4$ are uniformly bounded.  Furthermore, recall that $V$ has support on the strip $\{|y_1|<1+2\zeta\}$ (see~\eqref{V}). Since $x_1\in(0,2)$, we have that $\{|x_1+y_1|<1+2\zeta\}\subset \{|y_1|<4\}$ and then   
    \begin{align*}
        A_3
        &=c_N \varphi(x)\int_{\R^N\setminus B_1}\frac{V(x+y)}{|y|^N} \, dy
        \leq c_N \|V\|_\infty\|\varphi\|_\infty\int_{(\R^N\setminus B_1)\cap\{|x_1+y_1|<1+2\zeta\}}|y|^{-N} \, dy
        \\& 
        \le c_N\|V\|_\infty\|\varphi\|_\infty\left(
        \int_{\{|y'|<1\}\cap \{|y_1| < 4\}\cap \{|y|>1\}}|y|^{-N}\, dy +
        \int_{\{|y_1| < 4\}} \int_{\{|y'|>1\}} |y'|^{-N} \, dy' dy_1\right)\\
        & 
        \le c_N\|V\|_\infty\|\varphi\|_\infty\left(
        \Big|\{|y'|<1\}\cap \{|y_1| < 4\}\Big|+
        \Big|\{|y_1| < 4\}\Big|\sigma_{N-1}\int_{1}^\infty \rho^{-2} \, d \rho\right)
        < \infty,
    \end{align*}
    where $y=(y_1,y')\in \R\times \R^{N-1}$.
\end{proof}

\begin{lemma}\label{V:lem:3}
 It holds that
\begin{align*}
\int_{\R^N\backslash B_1(x_\eps)} \frac{\widetilde V(y)}{|y-x_\eps|^N}\ dy<\frac{\sigma_N}{4}\sqrt{\ln 2}\qquad \text{ for }\eps\in(0,\zeta).
\end{align*}
\end{lemma}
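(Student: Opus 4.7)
The strategy is to dominate the integral by an elementary volume computation on a thin spherical shell. Since $\widetilde V(y)=V(y)\varphi(|y|)$ and $\varphi$ vanishes outside $(-1-2\zeta,1+2\zeta)$, the support of $\widetilde V$ is contained in the half ball $B_{1+2\zeta}\cap\R^N_+$. Moreover, $|y-x_\eps|^{-N}\leq 1$ on the integration domain, so
\begin{equation*}
\int_{\R^N\setminus B_1(x_\eps)}\frac{\widetilde V(y)}{|y-x_\eps|^N}\,dy \;\leq\; \int_{(B_{1+2\zeta}\cap\R^N_+)\setminus B_1(x_\eps)}\widetilde V(y)\,dy.
\end{equation*}
The reverse triangle inequality gives $|y|\geq |y-x_\eps|-|x_\eps|\geq 1-\eps$ on the remaining set, so it lies in $\{y\in\R^N_+:1-\eps\leq|y|\leq 1+2\zeta\}$.

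Passing to spherical coordinates $y=r\omega$ with $\omega\in\partial B_1$, I note that $y_1=r\omega_1\leq r\leq 1+2\zeta<3/2$ by~\eqref{zeta}, whence $-\ln(y_1/2)\geq \ln\!\bigl(2/(1+2\zeta)\bigr)>\ln(4/3)$ and therefore $\widetilde V(y)\leq 1/\sqrt{\ln(4/3)}$. Since the upper hemisphere $\{\omega\in\partial B_1:\omega_1>0\}$ has surface measure $\omega_N/2$, this produces
\begin{equation*}
\int_{(B_{1+2\zeta}\cap\R^N_+)\setminus B_1(x_\eps)}\widetilde V(y)\,dy \;\leq\; \frac{1}{\sqrt{\ln(4/3)}}\cdot\frac{\omega_N}{2}\cdot\frac{(1+2\zeta)^N-(1-\eps)^N}{N}.
\end{equation*}

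To close the argument, Bernoulli's inequality yields $(1-\eps)^N\geq 1-N\eps$, so $(1+2\zeta)^N-(1-\eps)^N\leq\bigl[(1+2\zeta)^N-1\bigr]+N\eps$. Condition~\eqref{zeta2} gives $(1+2\zeta)^N-1<\tfrac{N}{4}\sqrt{\ln 2}\sqrt{\ln(4/3)}$, while $\eps<\zeta$ combined with~\eqref{zeta} gives $N\eps<\tfrac{N}{4}\sqrt{\ln 2}\sqrt{\ln(4/3)}$; summing the two contributions produces $\tfrac{N}{2}\sqrt{\ln 2}\sqrt{\ln(4/3)}$, and substitution yields exactly the bound $\tfrac{\omega_N}{4}\sqrt{\ln 2}$. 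The main delicacy is the bookkeeping: both constraints on $\zeta$ are used in essential ways, as \eqref{zeta} controls the pointwise bound on $V$ through $1+2\zeta<3/2$ and also absorbs the $O(\eps)$ error from the off-centered hole $B_1(x_\eps)$, while the $N$-dependent condition~\eqref{zeta2} is tailored precisely to kill the shell factor $(1+2\zeta)^N-1$.
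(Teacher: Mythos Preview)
Your proof is correct and follows essentially the same strategy as the paper: bound $|y-x_\eps|^{-N}\le 1$ on the domain, bound $\widetilde V\le 1/\sqrt{\ln(4/3)}$ pointwise via $y_1\le 1+2\zeta<3/2$, and reduce to a volume estimate controlled by~\eqref{zeta2}. The one minor difference is in the volume step: you restrict to $\R^N_+$ (gaining a factor $1/2$) but then enclose the domain in the half-annulus $\{1-\eps\le|y|\le 1+2\zeta\}$, which forces you to absorb the extra $N\eps$ term using the sharp form of~\eqref{zeta}; the paper instead keeps the full ball $B_{1+2\zeta}$ and simply observes that $B_1(x_\eps)\subset B_{1+2\zeta}$ for $\eps<\zeta$, so $|B_{1+2\zeta}\setminus B_1(x_\eps)|=|B_{1+2\zeta}|-|B_1|=\tfrac{\omega_N}{N}\bigl((1+2\zeta)^N-1\bigr)$ with no $\eps$-correction at all, after which~\eqref{zeta2} alone finishes the estimate.
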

\begin{proof}
By~\eqref{zeta2},
\begin{align*}
&\int_{\R^N\backslash B_1(x_\eps)} \frac{\widetilde V(y)}{|y-x_\eps|^N}\ dy
=\int_{B_{1+2\zeta}\backslash B_1(x_\eps)} \frac{1}{\sqrt{-\ln \frac{y_1}{2}}}\frac{\varphi(y_1)\chi_{(0,\infty)}(y_1)}{|y-x_\eps|^N}\ dy\\
&\leq \frac{1}{\sqrt{-\ln \frac{1+2\zeta}{2}}}\int_{B_{1+2\zeta}\backslash B_1(x_\eps)} \frac{1}{|y-x_\eps|^N}\ dy
\leq \frac{1}{\sqrt{-\ln \frac{1+2\zeta}{2}}}|B_{1+2\zeta}\backslash B_1(x_\eps)|\\
&=\frac{1}{\sqrt{-\ln \frac{1+2\zeta}{2}}}|B_{1+2\zeta}\backslash B_1|
\leq \frac{1}{\sqrt{-\ln \frac{1+\frac{1}{2}}{2}}}
\frac{\sigma_N}{N}\left((1+2\zeta)^N-1\right)<\frac{\sigma_N}{4}\sqrt{\ln 2}
\end{align*}
for $\eps\in(0,\zeta).$
\end{proof}

\begin{lemma}\label{V:lem:4}
It holds that $\widetilde V\in {\mathbb H}(B_2^+)$.
\end{lemma}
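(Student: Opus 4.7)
The plan is to verify the three defining conditions of $\mathbb{H}(B_2^+)$: $\widetilde V \in L^2(\R^N)$, $\widetilde V \equiv 0$ on $\R^N \setminus B_2^+$, and finiteness of the Gagliardo-type seminorm
\[
I := \iint_{|x-y|\le 1} \frac{(\widetilde V(x)-\widetilde V(y))^2}{|x-y|^N}\,dx\,dy.
\]
The first two conditions follow immediately from~\eqref{Vtilde}: the support of $\widetilde V$ is contained in $\{x_1>0\}\cap B_{1+2\zeta}\subset B_2^+$ (using $1+2\zeta<3/2<2$), and on this support $\sqrt{-\ln(x_1/2)}$ is bounded below, so $\widetilde V \in L^\infty(\R^N)$ with compact support and therefore $\widetilde V\in L^2(\R^N)$.

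For the seminorm I would exploit the product structure $\widetilde V(x)=u(x_1)\varphi(|x|)$, where $u$ is the one-dimensional barrier from~\eqref{u}. Writing
\[
\widetilde V(x)-\widetilde V(y) = u(x_1)\bigl(\varphi(|x|)-\varphi(|y|)\bigr)+\varphi(|y|)\bigl(u(x_1)-u(y_1)\bigr)
\]
and applying $(a+b)^2\le 2a^2+2b^2$, it suffices to bound the two integrals
\[
I_1 := \iint_{|x-y|\le 1}\frac{u(x_1)^2(\varphi(|x|)-\varphi(|y|))^2}{|x-y|^N}\,dx\,dy,\qquad I_2:= \iint_{|x-y|\le 1}\frac{\varphi(|y|)^2(u(x_1)-u(y_1))^2}{|x-y|^N}\,dx\,dy.
\]
Since $\varphi$ is Lipschitz, so is $x\mapsto\varphi(|x|)$, giving $(\varphi(|x|)-\varphi(|y|))^2\le C|x-y|^2$; combined with the boundedness of $u$ and the restriction $|x|\le 3$ (forced by the supports of $u(x_1)$ and $\varphi(|x|)-\varphi(|y|)$ together with $|x-y|\le 1$), this yields $I_1\le C'\int_{|x|\le 3}\int_{B_1}|z|^{2-N}\,dz\,dx<\infty$.

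The main obstacle is $I_2$: the pointwise $\ell^{1/2}$-H\"older estimate on $u$ only gives an integrand of order $1/(|x-y|^N\,|\ln|x-y||)$, which is not absolutely integrable, so finiteness must come from the already-established one-dimensional result $u\in\mathbb{H}((0,2))$ of Lemma~\ref{uinH:lem}. My plan is to use the support of $\varphi(|y|)$ to restrict to $|x|\le 3$, and then perform a cylindrical reduction: after the substitution $z=y-x$, for each fixed $z_1\in(-1,1)$ the rescaling $z'=|z_1|w$ together with the finiteness of $\int_0^\infty s^{N-2}(1+s^2)^{-N/2}\,ds$ yields
\[
\int_{|z'|\le\sqrt{1-z_1^2}}(z_1^2+|z'|^2)^{-N/2}\,dz' \le \frac{C_N}{|z_1|}.
\]
This converts $I_2$ into a constant multiple of the one-dimensional Gagliardo form $\int_\R\int_{-1}^{1}\frac{(u(x_1)-u(x_1+z_1))^2}{|z_1|}\,dz_1\,dx_1$ (the integrand is supported in $|x_1|\le 3$ since $u$ vanishes outside $(0,2)$), which is finite by Lemma~\ref{uinH:lem}. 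Combining the estimates on $I_1$ and $I_2$ gives $I<\infty$, completing the verification that $\widetilde V\in \mathbb{H}(B_2^+)$.
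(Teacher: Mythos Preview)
Your proof is correct and follows a genuinely different, more streamlined route than the paper's. You exploit the product structure $\widetilde V(x)=u(x_1)\varphi(|x|)$ to split the Gagliardo integrand, dispatch the $\varphi(|\cdot|)$-contribution $I_1$ by a Lipschitz bound, and then reduce $I_2$ to the one-dimensional seminorm of $u$ via the transverse estimate $\int_{|z'|\le\sqrt{1-z_1^2}}(z_1^2+|z'|^2)^{-N/2}\,dz'\le C_N/|z_1|$, after first restricting to $|x|\le 3$ (so that the $x'$-integration contributes only a finite factor); finiteness then follows directly from Lemma~\ref{uinH:lem}. The paper instead parallels the proof of Lemma~\ref{uinH:lem} in $\R^N$: it writes $\|\widetilde V\|^2$ as a ``diagonal'' term controlled by $\sup_{z\in B_2^+}\bigl|\int_{B_1(z)}\frac{\widetilde V(z)-\widetilde V(y)}{|z-y|^N}\,dy\bigr|$ (finite by Lemma~\ref{V:lem:2}, i.e.\ $L_\Delta\widetilde V\in L^\infty$) plus a cross term $\int_{B_3}\int_{B_1}\frac{(\widetilde V(y+x)-\widetilde V(x))\widetilde V(y+x)}{|y|^N}\,dy\,dx$, which is then split into four regions $A_1,\dots,A_4$ according to the sign and size of $x_1$ and bounded using the pointwise estimates of Lemma~\ref{J1}. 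Your argument buys independence from Lemma~\ref{V:lem:2} and avoids repeating the near-boundary computations in higher dimensions; the paper's argument keeps the dimensional induction implicit and reuses the already-established $N$-dimensional pointwise bounds.
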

\begin{proof}
We argue as in Lemma~\ref{uinH:lem}. Let $\zeta>0$ be as in~\eqref{zeta2} and define 
\begin{align*}
K:=\{x\in B_3\::\:-\zeta<x_1<\zeta\}\qquad \text{ and } \qquad U:=B_3\backslash K.
\end{align*}
 For $x\in U$, we have that
\begin{align*}
    |\widetilde V(x+y)-\widetilde V(x)|=|\nabla \widetilde V(x)\cdot y+o(y)|\leq |y|(\|\nabla \widetilde V\|_{L^\infty(U)}+o(1))\qquad \text{ as }|y|\to 0.
\end{align*}
Then
\begin{align}
A_1&:=\int_{U}\int_{B_1}\frac{(\widetilde V(y+x)-\widetilde V(x))\widetilde V(y+x)}{|y|^N} \, dy\, dx
\leq \|\widetilde V\|_\infty \int_{U}\int_{B_1} (\|\nabla \widetilde V\|_{L^\infty(U)}+o(1))|y|^{1-N}\, dy\, dx<\infty,\notag\\ \notag
A_2&:=\int_{K\cap\{x_1>0\}}\int_{B_1\cap\{y_1>x_1\}}\frac{(\widetilde V(y+x)-\widetilde V(x))\widetilde V(y+x)}{|y|^N} \, dy\, dx\\
&\leq 2\sigma_{N-1}\|\widetilde V\|^2_\infty 
\int_{K\cap\{x_1>0\}}\int_{0}^1\int_{x_1}^1|y_1^2+\rho^2|^{-\frac{N}{2}}\rho^{N-2}\,dy_1 \, d\rho\, dx\notag\\
&\leq 2\sigma_{N-1}\|\widetilde V\|^2_\infty 
\int_{K\cap\{x_1>0\}}\int_{x_1}^1y_1^{-1} \int_{0}^{\infty}\frac{\rho^{N-2}}{|1+\rho^2|^{\frac{N}{2}}}\, d\rho\,dy_1 \, dx\notag\\
&=2\sigma_{N-1}\|\widetilde V\|^2_\infty \frac{\sqrt{\pi } \Gamma \left(\frac{N-1}{2}\right)}{2 \Gamma \left(\frac{N}{2}\right)}
\int_{K\cap\{x_1>0\}}-\ln(x_1) \, dx<\infty.\label{b}
\end{align}
Moreover, 
\begin{align*}
A_3&:=\int_{K\cap\{x_1>0\}}\int_{B_1\cap\{y_1<x_1\}}\frac{(\widetilde V(y+x)-\widetilde V(x))\widetilde V(y+x)}{|y|^N} \, dy\, dx\\
&\leq \|\widetilde V\|_\infty\int_{K\cap\{x_1>0\}}\int_{B_1\cap\{-x_1<y_1<x_1\}}\frac{|\widetilde V(x)-\widetilde V(y+x)|}{|y|^N} \, dy\, dx<\infty,
\end{align*}
where the finiteness of $A_3$ follows from~\eqref{lem211} (note that~\eqref{lem211} is stated for $V$ instead of $\widetilde V$, but since $\widetilde V$ is given by~\eqref{Vtilde}, the bound easily extends to $\widetilde V$).

Furthermore, since $\widetilde V(x+y)=0$ if $x_1+y_1<0$, arguing as in~\eqref{b},
\begin{align*}
A_4&
:=\int_{K\cap\{x_1<0\}}\int_{B_1}\frac{(\widetilde V(y+x)-\widetilde V(x))\widetilde V(y+x)}{|y|^N} \, dy\, dx\\
&=\int_{K\cap\{x_1<0\}}\int_{B_1\cap\{-x_1<y_1<1\}}\frac{(\widetilde V(y+x)-\widetilde V(x))\widetilde V(y+x)}{|y|^N} \, dy\, dx\\
&\leq 2\sigma_{N-1}\|\widetilde V\|^2_\infty\int_{K\cap\{x_1>0\}}dx\int_{x_1}^1\int_0^1 \frac{\rho^{N-2}}{|y_1^2+\rho^2|^{\frac{N}{2}}} \,d\rho\, dy_1\, dx
\\
& \leq 2\sigma_{N-1}\|\widetilde V\|^2_\infty\int_{K\cap\{x_1>0\}} dx \int_{x_1}^{1}y_1^{-1}dy_1\int_0^\infty \frac{r^{N-2}}{|1+r^2|^{N/2}}dr\\
&= C \|\widetilde V\|^2_\infty \int_{K\cap\{x_1>0\}}-\ln(x_1) dx < \infty,
\end{align*}
where $C>0$ is a constant only depending on $N$.

Then,
\begin{align}\label{AsN}
A:=\int_{\R^N}\int_{B_1}\frac{(\widetilde V(y+x)-\widetilde V(x))\widetilde V(y+x)}{|y|^N}
    \, dy\, dx=A_1+A_2+A_3+A_4<\infty.
\end{align}

Recall that
\begin{align*}
&\frac{2}{c_N}\|\widetilde V\|^2
=\int_{\R^N}\int_{\R^N}
    \frac{(\widetilde V(x)-\widetilde V(y))\widetilde V(x)}{|x-y|^N}\chi_{B_1(x)}(y)
    +\frac{(\widetilde V(y)-\widetilde V(x))\widetilde V(y)}{|x-y|^N}\chi_{B_1(x)}(y)
    \, dy\, dx.
\end{align*}
Therefore, using Fubini's theorem, a change of variables,
and~\eqref{AsN},
    \begin{align*}
&\int_{\R^N}\int_{\R^N}
    \frac{(\widetilde V(x)-\widetilde V(y))\widetilde V(x)}{|x-y|^N}\chi_{B_1(x)}(y)
    \, dy\, dx
    =\int_{\R^N}\int_{\R^N}
    \frac{(\widetilde V(x)-\widetilde V(y))\widetilde V(x)}{|x-y|^N}\chi_{B_1(x)}(y)\, dx\, dy\\
&\qquad =\int_{\R^N}\int_{B_1}
    \frac{(\widetilde V(z+y)-\widetilde V(y))\widetilde V(z+y)}{|z|^N}\, dz\, dy=A
    <\infty.
\end{align*}
Similarly, 
\begin{align*}
    \int_{\R^N}\int_{\R^N}
    \frac{(\widetilde V(y)-\widetilde V(x))\widetilde V(y)}{|x-y|^N}\chi_{B_1(x)}(y)
    \, dy\, dx
    =\int_{\R^N}\int_{B_1}\frac{(\widetilde V(z+x)-\widetilde V(x))\widetilde V(z+x)}{|z|^N}
    \, dz\, dx=A<\infty.
\end{align*}
Then $\|\widetilde V\|^2<\infty$.  Since we also know that $\widetilde V\in L^2(\R^N)$, we obtain that $\widetilde V\in \mathbb H(B_2^+)$ as claimed.
\end{proof}

The following result implies Theorem~\ref{barrier:thm} in the case $N\geq 2$.

\begin{theorem}\label{thm:log:N} The function $\widetilde V$ given by~\eqref{V} belongs to ${\mathbb H}(B_2^+)\cap C^\infty(\R^N_+)\cap\cL^{\frac{1}{2}}(\R^N)$ and there is $\delta\in(0,2)$ such that
\begin{align*}
L_\Delta \widetilde V\in L^\infty(B_2^+)\qquad \text{ and }\qquad L_\Delta \widetilde V(x)>\frac{\sigma_N}{2}\sqrt{\ln 2}\quad \text { for }x\in B_\delta\cap\R^N_+.
\end{align*}
\end{theorem}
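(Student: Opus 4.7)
The plan is to mirror the proof of Theorem~\ref{thm:L} by assembling the higher-dimensional analogues of its ingredients, each of which has already been established in this section. Specifically: (i) $\widetilde V \in \mathbb{H}(B_2^+)$ is exactly Lemma~\ref{V:lem:4}; (ii) the smoothness $\widetilde V \in C^\infty(\R^N_+)$ is immediate from~\eqref{Vtilde}, since on $\R^N_+$ each of $(-\ln(x_1/2))^{-1/2}$, $\varphi(x_1)$, and $\varphi(|x|)$ is smooth; (iii) the bound $L_\Delta \widetilde V \in L^\infty(B_2^+)$ is exactly Lemma~\ref{V:lem:2}.

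For the log-Hölder regularity $\widetilde V \in \cL^{1/2}(\R^N)$, I would write $\widetilde V(x) = u(x_1)\varphi(|x|)$, where $u \in \cL^{1/2}(\R)$ by Theorem~\ref{thm:L} and $\varphi(|\cdot|)$ is smooth with compact support. A standard product estimate $|fg(x)-fg(y)| \le \|f\|_\infty |g(x)-g(y)| + |f(x)-f(y)|\|g\|_\infty$, combined with the semi-homogeneity of $\ell$ from Lemma~\ref{prop1}, controls the full seminorm $[\widetilde V]_{\cL^{1/2}(\R^N)}$ and hence gives $\widetilde V \in \cL^{1/2}(\R^N)$.

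The remaining assertion, the pointwise lower bound near the origin, is the crux of the theorem. The plan is to apply Lemma~\ref{V:lem:1} with $V = \widetilde V$, $\Omega = B_2^+$, $U = B_{\delta_0} \cap \R^N_+$ for a small $\delta_0 \in (0,\zeta)$ to be fixed, $\eta = 3/4$, and $\sigma$ of order $\omega_N \sqrt{\ln 2}$ (with a factor of $c_N$ absorbed so that both conditions in~\eqref{d} hold). The tail condition follows from Lemma~\ref{V:lem:3}, which supplies the explicit bound $\tfrac{\omega_N}{4}\sqrt{\ln 2}$ at $x_\eps$; by the support properties of $\widetilde V$ and the translation invariance of the definition in the transverse directions, this bound extends to all $z \in U$ up to shrinking $\delta_0$. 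The local condition requires a lower bound on $\int_{B_1} \frac{\widetilde V(z)-\widetilde V(z+y)}{|y|^N}\,dy$ that exceeds $(1+\eta)\sigma$ uniformly as $z_1 \to 0^+$, and Theorem~\ref{thm:bdry} supplies exactly this at $z = x_\eps$ with leading term $\omega_N \sqrt{\ln 2}$.

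Two reductions allow Theorem~\ref{thm:bdry} to be applied at arbitrary $z \in U$, not just at the axial points $x_\eps$: (a) since $\delta_0 < \zeta$ and $\varphi(|\cdot|) \equiv 1$ on $B_{1+\zeta}$, for every $z \in U$ and $|y| \le 1$ one has $\widetilde V(z) = V(z)$ and $\widetilde V(z+y) = V(z+y)$, so the $B_1$-integral for $\widetilde V$ reduces to the one for $V$; (b) since $V(x) = u(x_1)$ is independent of the transverse coordinates, that integral depends on $z$ only through $z_1$, and Theorem~\ref{thm:bdry} applied with $\eps = z_1$ yields the required bound uniformly. Lemma~\ref{V:lem:1} then delivers an open subset $U' \subset U$ with $\overline{U'} \cap \partial\Omega = \overline{U} \cap \partial\Omega$ on which $L_\Delta \widetilde V \ge \sigma$, and choosing $\delta > 0$ so that $B_\delta \cap \R^N_+ \subset U'$ concludes the proof. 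The main technical obstacle is already packed into Theorem~\ref{thm:bdry}: the sharp asymptotic identification of the leading constant $\omega_N \sqrt{\ln 2}$ for the local integral; here we are simply assembling that sharp estimate with the tail bound into the comparison framework of Lemma~\ref{V:lem:1}.
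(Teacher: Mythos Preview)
Your proposal is correct and follows essentially the same route as the paper: invoke Lemma~\ref{V:lem:4} for $\mathbb{H}(B_2^+)$, Lemma~\ref{V:lem:2} for $L_\Delta\widetilde V\in L^\infty$, observe $\widetilde V=V$ on $B_{1+\zeta}$ so that the local integral reduces (via the transverse translation invariance of $V$) to the $J$ of Theorem~\ref{thm:bdry}, and then feed that lower bound together with the tail bound of Lemma~\ref{V:lem:3} into Lemma~\ref{V:lem:1} with $\sigma=\tfrac{\omega_N}{2}\sqrt{\ln 2}$ and $\eta=\tfrac{3}{4}$. The paper's proof differs only cosmetically: it states the $\cL^{1/2}$ membership ``by construction'' rather than via your product estimate, and it phrases the conclusion directly at points $y_\eps=(\eps,y')\in B_\delta$ instead of first naming the subset $U'$ and then choosing $\delta$ with $B_\delta\cap\R^N_+\subset U'$.
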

\begin{proof}
By Lemma~\ref{V:lem:2} we know that $L_\Delta \widetilde V\in L^\infty(B_2^+)$.  By Lemma~\ref{V:lem:4} and by construction we have that $\widetilde V \in {\mathbb H}(B_2^+)\cap C^\infty(\R^N_+)\cap \cL^{\frac{1}{2}}(\R^N)$. Finally, by Theorem~\ref{thm:bdry},
\begin{align*}
J:=\int_{B_1(x_\eps)} \frac{V(x_\eps)-V(y)}{|y-x_\eps|^N}\ dy>\sigma_N \sqrt{\ln 2}+o(1)\qquad \text{ as }\eps\to 0.
\end{align*}
Since, by~\eqref{phi:def}, it holds that $\varphi(|x|)=1$ for $x\in B_{1+\zeta}$ , we have that
$\widetilde V=V$ in $B_{1+\zeta}$ (with $V$ given in~\eqref{V}) and therefore
\begin{align*}
\int_{B_1(x)} \frac{\widetilde V(x)- \widetilde V(y)}{|y-x|^N}\ dy
=\int_{B_1(x)} \frac{V(x)-V(y)}{|y-x|^N}\ dy\qquad \text{ for all }x\in B_\zeta.
\end{align*}
In particular,
\begin{align}\label{VwV}
\int_{B_1(y_\eps)} \frac{\widetilde V(y_\eps)- \widetilde V(y)}{|y-y_\eps|^N}\ dy
=J>\sigma_N \sqrt{\ln 2}+o(1)\qquad \text{ as }\eps\to 0
\end{align}
for all $y_\eps=(\eps,y')$ where $y'\in\R^{N-1}$ is such that $y_\eps\in B_\zeta$. Then, by Lemma~\ref{V:lem:3},~\eqref{VwV} and Lemma~\ref{V:lem:1} (with $\sigma = \sigma_N\sqrt{\ln 2}/2$ and $\eta=3/4$), there is $\delta\in(0,\zeta)$ such that 
\begin{align*}
L_\Delta \widetilde V(y_\eps)>\frac{\sigma_N}{2}\sqrt{\ln 2}
\end{align*}
for all $y_\eps=(\eps,y')\in B_\delta\cap B_2^+$.
\end{proof}

\begin{proof}[Proof of Theorem~\ref{barrier:thm}] The claim follows from Theorems~\ref{thm:L} (for the case $N=1$) and Theorem~\ref{thm:log:N} (for the case $N\geq 2$).
\end{proof}

\section{A new barrier via the Kelvin transform} \label{sec:kelvin}

Let $R>0$ and $x_0\in \R^N$. The inversion of a point $x\in \R^N$ with respect to the sphere $S_{R}(x_0)$ is given by
\begin{equation} \label{eq:inversion_sphere}
    x^* := x_0 + R^2\frac{x-x_0}{|x-x_0|^2}, \quad x \neq x_0,
\end{equation}
and the Kelvin transform of $u$ is
\begin{equation} \label{eq:kelvin_transform}
    u^*(x^*) := |x^*-x_0|^{-N} u(x), \quad x^* \neq x_0.
\end{equation}
For $\Omega \subset \R^N$ we define $\Omega^* := \{ x^*: x\in \Omega \}$.  Recall that
\begin{align}\label{Kelvin:prop}
    |x^*-z^*|=\frac{|x - z|}{|x||z|}\qquad \text{ for }x,z\in\R^N\backslash\{0\},
\end{align}
see, for instance, \cite[Proposition A.3]{B16}.  

To prove Proposition~\ref{K:lem}, we show an auxiliary lemma first. Recall the definition of $h_\Omega:\Omega\to\R$ (see~\eqref{eq:h_omega}) given by
\begin{equation*}
h_\Omega(x):=c_N\left(\int_{B_1(x)\backslash \Omega}|x-y|^{-N}\, dy-\int_{ \Omega\backslash B_1(x)}|x-y|^{-N}\, dy\right).
\end{equation*}

\begin{lemma}\label{h:aux:lem}
Let $R,$ $x_0$, and $x^*$ as in~\eqref{eq:inversion_sphere}. Let $\Omega\subset\R^N$ be an open bounded subset such that $B_R(x_0)\subset \Omega^c.$ The function $x\mapsto h_{\Omega^*}(x^*)-h_\Omega(x)$ is bounded in $\Omega$.
\end{lemma}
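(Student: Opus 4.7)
First, I would translate so that $x_0 = 0$ (which leaves $h_{\Omega^*}(x^*) - h_\Omega(x)$ invariant) and then perform the change of variables $z = y^*$ in $h_{\Omega^*}(x^*)$. Using the Jacobian $R^{2N}/|y|^{2N}$ of the Kelvin inversion together with~\eqref{Kelvin:prop}, I obtain
\begin{equation*}
h_{\Omega^*}(x^*) = c_N|x|^N\left(\int_{\widetilde B \setminus \Omega} - \int_{\Omega \setminus \widetilde B}\right)\frac{dy}{|x-y|^N|y|^N},
\end{equation*}
where $\widetilde B := B_1(x^*)^* = \{y \in \R^N : |y-x| < |x||y|/R^2\}$ is the Kelvin preimage of $B_1(x^*)$. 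Subtracting $h_\Omega(x)$ and combining the four integrals over a single $\R^N$ domain gives
\begin{equation*}
h_{\Omega^*}(x^*) - h_\Omega(x) = c_N\int_{\R^N}\frac{|x|^N\chi_{\widetilde B}(y) - |y|^N\chi_{B_1(x)}(y) - (|x|^N - |y|^N)\chi_\Omega(y)}{|x-y|^N|y|^N}\,dy,
\end{equation*}
interpreted so that the singularities at $y=x$ cancel in the combination.

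The heart of the argument is the following case analysis on the three characteristic functions. Since $B_R \subset \Omega^c$, both $|x| \geq R$ and $|y|\geq R$ on $\Omega$; the defining inequality of $\widetilde B$ combined with $|y|\geq |x|-|y-x|$ gives $|y| \geq R^3/(R^2+\operatorname{diam}(\Omega))$ on $\widetilde B$ and also, because $|x||y|/R^2\geq 1$, the inclusion $\Omega\cap B_1(x)\subset \widetilde B$. In particular, on $B_1(x)\cap \widetilde B\cap \Omega$ all three $\chi$'s equal $1$ and the numerator vanishes identically. On $B_1(x)\cap \widetilde B\setminus \Omega$ the numerator reduces to $|x|^N-|y|^N = O(|x-y|)$ by the mean value theorem, so the integrand is $O(|x-y|^{1-N})$ and thus integrable near $y=x$ uniformly in $x$. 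This precisely cancels the (otherwise logarithmically divergent) boundary singularities of $h_\Omega(x)$ and $h_{\Omega^*}(x^*)$ as $x\to \partial\Omega$.

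The remaining sub-regions contribute bounded amounts: on $\Omega\setminus B_1(x)$ we have $|x-y|\geq 1$, so the integrand is bounded and integrated over the bounded set $\Omega$; on $B_1(x)\setminus \widetilde B$, a short computation from $|y-x|\geq |x||y|/R^2$ and $|x|\geq R$ yields $|y-x|\geq \rho_0:=R^2/(R^2+\operatorname{diam}(\Omega))$, so the integrand is bounded by $\rho_0^{-N}$ over a bounded set; and on $\widetilde B\setminus B_1(x)$ the integrand is $\leq |x|^N/(|x-y|^N|y|^N)$, which (using $|y-x|\sim |y|$ at infinity, relevant when $|x|>R^2$ and $\widetilde B$ is unbounded) decays like $|x|^N/|y|^{2N}$ and is integrable. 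All estimates are uniform in $x\in\Omega$, depending only on $N$, $R$, and $\operatorname{diam}(\Omega)$, proving the lemma. The main obstacle I expect is carrying out the case analysis cleanly, especially because the geometry of $\widetilde B$ changes qualitatively as $|x|$ crosses $R^2$ (bounded ball, half-space, or exterior of a ball); working directly from the defining inequality $|y-x| < |x||y|/R^2$ rather than from its geometric form provides a unified treatment.
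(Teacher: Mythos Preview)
Your approach is correct and genuinely different from the paper's. The paper does not transform the entire expression $h_{\Omega^*}(x^*)$ back to the $y$-variable; instead it isolates the singular part of each $h$ over a small ball. Concretely, it writes $h_\Omega(x)$ as an integral over $B_{1/2}(x)\setminus\Omega$ plus a bounded remainder $R_1$, maps $B_{1/2}(x)$ explicitly to a ball $B_\rho(P)$ under the inversion, finds a sub-ball $B_\sigma(x^*)\subset B_\rho(P)$, writes $h_{\Omega^*}(x^*)$ as an integral over $B_\sigma(x^*)\setminus\Omega^*$ plus a bounded remainder $R_2$, and finally compares the two main terms directly (the difference of integrands being $(|z|^{-N}|x^*|^N-1)/|z-x^*|^N$), bounding it by a Taylor expansion of $|z|^N-|x^*|^N$ in polar coordinates. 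Your route avoids the explicit Kelvin-image-of-a-ball formulas $(P,\rho,\sigma)$ entirely and replaces them with the algebraic description $\widetilde B=\{|y-x|<|x||y|/R^2\}$, which lets you run a single eight-region case analysis and handle the unbounded case $|x|>R^2$ without separate geometry. Both proofs hinge on the same cancellation---the mean-value estimate $||x|^N-|y|^N|=O(|x-y|)$ converting the nominally logarithmic divergence near $\partial\Omega$ into a uniformly integrable $|x-y|^{1-N}$---but your packaging is somewhat cleaner, while the paper's is more geometrically explicit. Two minor points of exposition: the bound you state on $\widetilde B\setminus B_1(x)$ actually holds only on $(\widetilde B\setminus B_1(x))\setminus\Omega$, but the $\Omega$-part is already covered by your $\Omega\setminus B_1(x)$ case; and your constants $R^3/(R^2+\operatorname{diam}\Omega)$ and $\rho_0$ are correct in spirit though a sharper calculation gives $R^2/(R+1)$ and $R/(R+1)$ respectively.
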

\begin{proof}
Without loss of generality we assume that $x_0=0$ and $R=1$; in particular this implies that $|x|>1$ for every $x\in \Omega$.  Given $x \in \Omega$ consider $B_{1/2}(x)$ that, under the inversion with respect to the unit sphere, is transformed into the ball $B_\rho(P)$, where
\begin{equation*}
    P = \frac{4x}{4|x|^2-1}, \quad \rho = \frac{2}{4|x|^2-1},
\end{equation*}
see \cite[Section 2]{BZ06}. This ball contains another one centered at $x^*$ such that
\begin{equation}
    \label{eq:ball_contention}
    B_\sigma(x^*) \subset B_\rho(P),
\end{equation}
where 
\begin{align}\label{sigma:def}
\sigma = \frac{2|x|-1}{|x|(4|x|^2-1)}.
\end{align}
Indeed, if $y \in B_\sigma(x^*)$, then
\begin{equation*}
    |y-P| \le |y-x^*|+|x^*-P| < \sigma + \frac{1}{|x|(4|x|^2-1)} = \rho.
\end{equation*}
Observe that $0<\sigma< \rho < 1$, as $|x| \ge 1$. 

Let $A_1 := [B_1(x) \setminus (B_{1/2}(x) \cup \Omega)] \cup [\Omega \setminus B_1(x)]$ and
\begin{equation*}
    R_1(x) := c_N \int_{A_1} \frac{1}{|y-x|^N} \, dy.
\end{equation*}
Note that $R_1\in L^\infty(\Omega)$. Then, by a change of variables ($y=z^*$, see \cite[Proposition A.3]{B16}) and~\eqref{Kelvin:prop},
\begin{equation}
    \label{eq:h_omega_est}
    \begin{aligned}
    h_\Omega(x) &= c_N\int_{B_{1/2}(x) \setminus \Omega} \frac{1}{|y-x|^N} \, dy + R_1(x) \\
    &= c_N \int_{B_\rho(P) \setminus \Omega^*} \frac{|z|^{-2N}}{|z^*-x|^N} \, dz + R_1(x)
    = c_N \int_{B_\rho(P) \setminus \Omega^*} \frac{|z|^{-N}|x^*|^N}{|z-x^*|^N} \, dz + R_1(x).
    \end{aligned}
\end{equation}
On the other hand, let $A_2 = [B_1(x^*) \setminus (B_\sigma(x^*) \cup \Omega)] \cup [\Omega \setminus B_1(x^*)]$ (with $\sigma$ as in~\eqref{sigma:def}) and 
\begin{equation*}
    R_2(x^*) = c_N \int_{A_2} \frac{1}{|z-x^*|^N} \, dz.
\end{equation*}
Note that $R_2\in L^\infty(\Omega^*)$ and 
\begin{equation}
    \label{eq:h_omega*_est}
    h_{\Omega^*}(x^*) = c_N \int_{B_\sigma(x^*) \setminus \Omega^*} \frac{1}{|z-x^*|^N} \, dz + R_2(x^*).
\end{equation}

Let $A_3 := [B_\rho(P) \setminus (B_\sigma(x^*) \cup \Omega^*)]$ and 
\begin{equation*}
    R_3(x) := |R_1(x) - R_2(x)|+ c_N \int_{A_3} \left| \frac{|z|^{-N}|x^*|^N}{|z-x^*|^N} \right| \, dz.
\end{equation*}
Note that $R_3\in L^\infty(\Omega)$ and, from~\eqref{eq:ball_contention},~\eqref{eq:h_omega_est}, and~\eqref{eq:h_omega*_est} we deduce that
\begin{align}
        |h_\Omega(x)-h_{\Omega^*}(x^*)| 
        &\le \left|\int_{B_\sigma(x^*) \setminus \Omega^*} \frac{|z|^{-N}|x^*|^N-1}{|z-x^*|^N} \, dz \right|+ R_3(x) \notag\\
        &\le \int_{B_\sigma(x^*)} \frac{1}{|z|^N} \frac{\left| |z|^N-|x^*|^N \right|}{|z-x^*|^N} \, dz + R_3(x) \notag\\
        &\le \left( \frac{4C^2-1}{2} \right)^N \int_{B_\sigma(x^*)} \frac{\left| |z|^N-|x^*|^N \right|}{|z-x^*|^N} \, dz + R_3(x).\label{eq_h_omega-h_omega*}
\end{align}
For the last inequality in~\eqref{eq_h_omega-h_omega*}, we used that $B_\sigma(x^*) \subset \R^N \setminus B_{2/(4C^2-1)}$ with $C := \sup_{x \in \Omega} |x|$.

To conclude, we use polar coordinates to deduce that, up to a constant,
\begin{equation} \label{eq:z-xi}
    \int_{B_\sigma(x^*)} \frac{\left| |z|^N-|x^*|^N \right|}{|z-x^*|^N} \, dz = \int_0^\sigma \int_{S_1} \frac{\left| |x^*+r\theta|^N-|x^*|^N \right|}{r} \, dS(\theta) dr,
\end{equation}
where $S_1 = \partial B_1$. By a Taylor expansion,
\begin{equation}\label{T:exp}
    \left|\frac{|x^*+r\theta|^N-|x^*|^N}{r}\right|\leq N |\theta||x^*|^{N-1} + R(r),
\end{equation}
where $R$ is continuous and $R(r) \to 0$ as $r \to 0^+$. Then~\eqref{eq_h_omega-h_omega*},~\eqref{eq:z-xi}, and~\eqref{T:exp} yield that $h_\Omega - h_{\Omega^*} \in L^\infty(\Omega)$.
\end{proof}

\begin{proof}[Proof of Proposition~\ref{K:lem}]
Without loss of generality we assume that $x_0=0$ and $R=1$.

To prove $i)$ let $u$ be a Dini continuous function at $x$ and $x\neq x_0$.  We argue as in \cite[Proposition A.1]{RS14}; however the logarithmic case is more involved.  In particular, one cannot assume without loss of generality that $u^*(x^*)=0$. By \cite[Proposition 2.2]{CW19},
\begin{equation} \label{eq:L_delta2}
L_\Delta u^*(x^*)=c_N\int_{\Omega^*}\frac{u^*(x^*)- u^*(y)}{|x^*-y|^N}\, dy + (h_{\Omega^*}(x^*)+\rho_N) |x^*|^{-N}u(x),
\end{equation}
where $h_{\Omega^*}$ is given by~\eqref{eq:h_omega}. Using the change of variables $y=z^*$ (see \cite[Proposition A.3]{B16}) and~\eqref{eq:L_delta2},
  \begin{align*}
c_N \int_{\Omega^*} &\frac{u^*(x^*) - u^*(y)}{|x^*-y|^N}\, dy= c_N \int_{\Omega} \frac{u^*(x^*) - u^*(z^*)}{|x^*-z^*|^N} |z|^{-2N}\, dz \notag\\
&= c_N\int_{\Omega} \frac{|x^*|^{-N} u(x) - |z^*|^{-N} u(z)}{|x-z|^N}|x^*|^{-N}|z^*|^{-N} |z^*|^{2N}\, dz \notag\\
&=c_N |x^*|^{-2N} \int_{\Omega} \frac{|z^*|^Nu(x) - |x^*|^Nu(z)}{|x-z|^N}\, dz \notag\\
&= c_N |x^*|^{-2N}u(x) \int_{\Omega}\frac{|z|^{-N}-|x|^{-N}}{|z-x|^N}\, dz
+ c_N |x^*|^{-N} \int_{\Omega}\frac{u(x)-u(z)}{|x-z|^N}\, dz  \notag\\
&= c_N |x^*|^{-2N}u(x) \int_{\Omega}\frac{|z|^{-N}-|x|^{-N}}{|z-x|^N}\, dz + |x^*|^{-N} L_\Delta u(x) - (h_\Omega(x)+\rho_N)|x^*|^{-N}u(x).
 \end{align*}
 This equation and~\eqref{eq:L_delta2} imply~\eqref{eq:L_delta1}.

To show $ii)$, assume that $u,L_\Delta u \in L^\infty(\Omega)$ and that 
\begin{align}\label{b:hyp}
B_R(x_0)=B_1(0) \subset \Omega^c.    
\end{align}
 By~\eqref{b:hyp}, it is clear that $u^*\in L^\infty(\Omega^*)$. Moreover, by Lemma~\ref{h:aux:lem},~\eqref{b:hyp}, and~\eqref{eq:L_delta1}, it also follows that $L_\Delta u^* \in L^\infty (\Omega^*)$, as claimed.

To prove $iii)$, let $u \in \mathbb{H}(\Omega)$ and recall that we have assumed that $x_0=0$, $R=1$, and that~\eqref{b:hyp} holds.  Note that, in this setting, $\Omega^*\subset B_1$, $B_2(y)\subset B_3$ for all $y\in \Omega^*$, and $B_2(y)\subset \R^N\backslash \Omega^*$ for all $y\in \R^N\backslash B_3$. To ease notation, let $S=S(x,y):=\frac{\left( u^*(x)-u^*(y)\right)^2}{|x-y|^N}$, then, using that $u^*=0$ in $\R^N\backslash \Omega^*$,
\begin{align*}
\int_{\R^N}\int_{B_2(y)}S \, dx dy
&=\int_{\Omega^*}\int_{B_2(y)}S \, dx dy
+\int_{\R^N\backslash \Omega^*}\int_{B_2(y)}S \, dx dy\\
&=\int_{\Omega^*}\int_{\Omega^*}S \, dx dy
+\int_{\Omega^*}\int_{B_2(y)\backslash \Omega^*}S \, dx dy
+\int_{B_3\backslash \Omega^*}\int_{B_2(y)}S \, dx dy.
\end{align*}

Note that, by~\eqref{b:hyp}, there is $\delta>0$ such that $B_\delta\subset (\Omega^*)^c$ and therefore there is some $M>0$ such that 
\begin{align*}
(B_3\backslash \Omega^*)^*\subset A\backslash \Omega,\qquad A:=B_M\backslash B_\frac{1}{3}.
\end{align*}
Then, by a change of variables ($x = \xi^*$, $y=z^*$) and~\eqref{Kelvin:prop}, 
\begin{align}
    &\int_{\R^N}\int_{B_2(y)} \frac{\left( u^*(x)-u^*(y)\right)^2}{|x-y|^N} \, dx dy 
    \le \int_{\Omega^*} \int_{\Omega^*} \frac{\left( u^*(x)-u^*(y)\right)^2}{|x-y|^N} \, dx dy + 2 \int_{\Omega^*} \int_{B_3 \setminus \Omega^*} \frac{u^*(y)^2}{|x-y|^N} \, dx dy \notag\\
    &\le \int_\Omega \int_\Omega \frac{\left( |\xi|^N u(\xi)-|z|^N u(z) \right)^2}{|\xi-z|^N} |\xi|^{-N} |z|^{-N} \, d\xi dz 
    + 2 \int_\Omega \int_{A\backslash \Omega}\frac{|z|^{2N} u(z)^2}{|\xi-z|^N} |\xi|^{-N} |z|^{-N} \, d\xi dz.\label{P1}
\end{align}
Using that $\Omega$ is bounded,~\eqref{b:hyp}, and that $(|\xi|^N u(\xi)-|z|^N u(z))^2\leq 2(|\xi|^{2N} (u(\xi)-u(z))^2+2(|\xi|^N-|z|^N)^2 u(z)^2$, we have that
\begin{align}
&\int_\Omega \int_\Omega \frac{\left( |\xi|^N u(\xi)-|z|^N u(z) \right)^2}{|\xi-z|^N} |\xi|^{-N} |z|^{-N} \, d\xi dz\notag\\
&\leq 2\int_\Omega \int_\Omega 
\frac{(u(\xi)-u(z))^2}{|\xi-z|^N} |\xi|^{N} |z|^{-N}
+\frac{(|\xi|^N-|z|^N)^2}{|\xi-z|^N}  u(z)^2|\xi|^{-N} |z|^{-N} \, d\xi dz\notag\\
&\leq C\int_\Omega \int_\Omega 
\frac{(u(\xi)-u(z))^2}{|\xi-z|^N} \, d\xi dz
+C_1\int_\Omega  u(z)^2 \, dz\label{P2}
\end{align}
for some $C_1=C_1(\Omega)>0$, where we used that
\begin{align*}
\sup_{a,b\in \Omega}\left(\frac{|a|^N-|b|^N}{|a-b|}\right)^2|a|^{-N}|b|^{-N}<\infty\quad \text{ and }\quad \sup_{a\in\Omega}\int_\Omega |\xi-a|^{2-N}\, d\xi<\infty.
\end{align*}
Moreover, using~\eqref{b:hyp} and that $\Omega$ is bounded, we find some $C_2=C_2(\Omega)>0$ such that
\begin{align}
\int_\Omega \int_{A\backslash \Omega}\frac{u(z)^2}{|\xi-z|^N}  \frac{|z|^{N}}{|\xi|^{N}} \, d\xi dz
&< C_2 \int_\Omega u(z)^2 \int_{A\backslash \Omega} |\xi-z|^{-N}\, d\xi dz\notag\\
&= \frac{C_2}{c_N} \int_\Omega u(z)^2 \kappa_\Omega(z)dz
+C_2 |A|\int_\Omega u(z)^2  dz,\label{P3}
\end{align}
where $\kappa_\Omega$ is the so-called \emph{killing measure} (see \cite[eq. (3.5)]{CW19}) given by 
$\kappa_{\Omega}(z):=c_N \int_{B_1(z) \backslash \Omega} |z-y|^{-N} dy$ for $z\in \Omega.$  By the logarithmic boundary Hardy inequality (see \cite[Remark 4.3 and Corollary A.2]{CW19}), we have that $\int_\Omega u(z)^2 \kappa_\Omega(z)dz<\infty$.  Claim $iii)$ now follows from~\eqref{P1},~\eqref{P2}, and~\eqref{P3}.
\end{proof}

\begin{remark}
The proof of the previous lemma can be slightly adapted to study the properties of functions in the fractional Sobolev spaces
\begin{equation*}
{\mathcal H}_0^s(\Omega):=\left\lbrace u\in L^{2}(\mathbb{R}^{N})~:~\int\int_{\R^N \times \R^N}\frac{|u(x)-u(y)|^2}{|x-y|^{N+2s}}\, dx\, dy<\infty~\mbox{and}~u=0~\mbox{in}~\mathbb{R}^{N}\setminus\Omega\right\rbrace,
\end{equation*}
$s \in (0,1)$, under their corresponding Kelvin transform
\begin{equation*}
    u^*(x^*) = |x^*-x_0|^{2s-N} u(x),
\end{equation*}
see \cite{RS14}. The following result is probably well-known, but we could not locate it in the literature.  We state it here for future reference. 

\begin{lemma}
    Let $\Omega$ be a bounded open set and $B_R(x_0) \subset \Omega^c$. If $u \in {\mathcal H}_0^s(\Omega)$, then $u^* \in {\mathcal H}_0^s(\Omega^*)$. 
\end{lemma}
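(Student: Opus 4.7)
The plan is to mimic the argument for part $iii)$ of Proposition~\ref{K:lem}, replacing the weight $|x-x_0|^{-N}$ by the fractional weight $|x-x_0|^{2s-N}$ and the logarithmic killing measure by its fractional analogue
\begin{equation*}
\kappa_{s,\Omega}(\xi):=\int_{\R^N\setminus\Omega}|\xi-z|^{-N-2s}\,dz.
\end{equation*}
Without loss of generality I assume $x_0=0$ and $R=1$, so $\Omega\subset \R^N\setminus B_1$, the set $\Omega^*$ is bounded away from both $0$ and $\infty$, and $u^*\equiv 0$ outside $\Omega^*$. I will use the identity $|x^*-z^*|=|x-z|/(|x||z|)$ and the Jacobian $dy=|z|^{-2N}dz$ under $y=z^*$ throughout. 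The $L^2$-bound is immediate: after changing variables, $\|u^*\|_{L^2(\Omega^*)}^2=\int_\Omega |\xi|^{-4s}u(\xi)^2\,d\xi\leq \|u\|_{L^2(\Omega)}^2$, since $|\xi|\geq 1$ on $\Omega$.

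To control the Gagliardo seminorm, I split
\begin{equation*}
\iint_{\R^N\times\R^N}\frac{(u^*(x)-u^*(y))^2}{|x-y|^{N+2s}}\,dx\,dy
=\iint_{\Omega^*\times\Omega^*}(\cdots)+2\int_{\Omega^*}u^*(x)^2\kappa_{s,\Omega^*}(x)\,dx.
\end{equation*}
For the diagonal piece I change variables $x=\xi^*$, $y=z^*$ to obtain, using $|\xi^*|^{2s-N}=|\xi|^{N-2s}$,
\begin{equation*}
\iint_{\Omega^*\times\Omega^*}\frac{(u^*(x)-u^*(y))^2}{|x-y|^{N+2s}}\,dx\,dy
=\iint_{\Omega\times\Omega}\frac{\bigl(|\xi|^{N-2s}u(\xi)-|z|^{N-2s}u(z)\bigr)^2}{|\xi-z|^{N+2s}}|\xi|^{2s-N}|z|^{2s-N}\,d\xi\,dz.
\end{equation*}
The elementary inequality $(au_1-bu_2)^2\leq 2a^2(u_1-u_2)^2+2(a-b)^2 u_2^2$ with $a=|\xi|^{N-2s}$, $b=|z|^{N-2s}$ splits this into two contributions. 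In the first, the weights $|\xi|^{N-2s}|z|^{2s-N}$ are bounded on $\Omega\times\Omega$ (bounded and bounded away from $0$), yielding a multiple of $[u]_{H^s(\Omega)}^2$. In the second, the Lipschitz continuity of $r\mapsto r^{N-2s}$ on bounded sets away from zero gives $(|\xi|^{N-2s}-|z|^{N-2s})^2\leq C|\xi-z|^2$, which reduces the integral to $\int_\Omega u(z)^2\int_\Omega |\xi-z|^{2-N-2s}\,d\xi\,dz$; the inner integral is finite precisely because $s<1$, and this bounds the contribution by a constant times $\|u\|_{L^2(\Omega)}^2$.

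For the mixed piece, changing variables $y=z^*$ in $\kappa_{s,\Omega^*}(\xi^*)$ yields
\begin{equation*}
\kappa_{s,\Omega^*}(\xi^*)=|\xi|^{N+2s}\int_{\R^N\setminus\Omega}\frac{|z|^{2s-N}}{|\xi-z|^{N+2s}}\,dz,
\end{equation*}
so after the further change $x=\xi^*$ I need to control $\int_\Omega |\xi|^{N-2s}u(\xi)^2\int_{\R^N\setminus\Omega}|z|^{2s-N}|\xi-z|^{-N-2s}\,dz\,d\xi$. I will split the $z$-integration into three regions: on $B_{1/2}\subset \R^N\setminus \Omega$ one has $|\xi-z|\geq 1/2$, and $\int_{B_{1/2}}|z|^{2s-N}dz<\infty$ since $s>0$; on the far region $\{|z|>M\}$ (for $M>\sup_{\xi\in\Omega}|\xi|+1$) one has $|\xi-z|\sim|z|$, so the integrand decays like $|z|^{-2N}$; and on the bounded middle region the factor $|z|^{2s-N}$ is bounded, leaving an integrand controlled by $\kappa_{s,\Omega}(\xi)$ (up to the tail, which is bounded).

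Collecting these estimates, the mixed piece is bounded by $C\|u\|_{L^2(\Omega)}^2+C\int_\Omega u(\xi)^2\kappa_{s,\Omega}(\xi)\,d\xi$, and the latter integral is finite because $u\in \mathcal{H}_0^s(\Omega)$ (the Gagliardo seminorm of $u$ over $\R^N\times\R^N$ decomposes into the seminorm on $\Omega\times\Omega$ plus twice $\int_\Omega u^2\kappa_{s,\Omega}$, both of which are assumed finite). Together with the $L^2$-bound and the vanishing of $u^*$ outside $\Omega^*$, this gives $u^*\in\mathcal{H}_0^s(\Omega^*)$. The main technical obstacle is the mixed piece: ensuring that the integral defining $\kappa_{s,\Omega^*}$ translates, through the Kelvin transform, into a controlled sum of $\kappa_{s,\Omega}$ plus manageable remainders. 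This is where the hypothesis $B_R(x_0)\subset\Omega^c$ is crucial, since it keeps $\Omega^*$ uniformly bounded away from the inversion center and prevents the weight $|z|^{2s-N}$ from generating a non-integrable singularity.
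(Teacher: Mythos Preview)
Your proposal is correct and carries out precisely the adaptation the paper gestures at (the paper gives no proof of this lemma, only the remark that the argument for Proposition~\ref{K:lem}~$iii)$ transfers). Your change of variables for the diagonal piece, the algebraic splitting $(au_1-bu_2)^2\le 2a^2(u_1-u_2)^2+2(a-b)^2u_2^2$, and the use of Lipschitz continuity of $r\mapsto r^{N-2s}$ on $[1,\sup_\Omega|\cdot|]$ mirror the paper's steps exactly; your observation that $\int_\Omega u^2\kappa_{s,\Omega}$ is finite \emph{directly} from $u\in\mathcal H_0^s(\Omega)$ (as half of the off-diagonal Gagliardo contribution) is in fact a simplification over the logarithmic case, where the paper had to invoke a boundary Hardy inequality.

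The only structural difference is forced by the different norms: in $\mathbb H(\Omega)$ the double integral is restricted to $|x-y|\le 1$, so the paper's off-diagonal term lives on a bounded annulus and never sees $z\to 0$ or $z\to\infty$ after inversion, whereas your fractional seminorm runs over all of $\R^N\times\R^N$ and you must handle the full $\kappa_{s,\Omega^*}$. Your three-region split (near $0$, middle, far) deals with this cleanly; the integrability of $|z|^{2s-N}$ near $0$ uses $s>0$ and the $|z|^{-2N}$ tail uses $N\ge 1$, both of which you identify. Nothing is missing.
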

\end{remark}

The Kelvin transform can now be used to produce a new barrier that will be useful to show the optimal regularity of the torsion function, see Figure~\ref{fig:kelvin}.

\begin{proposition}\label{v:barrier:prop}
Let $N\geq 2$ and $D:=B_{\frac{1}{2}}(\frac{1}{2}e_1)\cap\{x_1>\frac{1}{2}\}=\{x\in\R^N\::\: |x-\frac{1}{2}|<\frac{1}{2},\, x_1>\frac{1}{2}\}$. There are $f\in L^\infty(D)$ and $v\in \cL^{\frac{1}{2}}(\R^N)\cap C^\infty(D)\cap {\mathbb H}(D)$ such that $L_\Delta v= f$ pointwisely in $D$ and there are $c>0$ and $\delta>0$ such that
\begin{align}\label{v:est}
c\,\ell^{\frac{1}{2}}(\operatorname{dist}(x,\partial D)) < v(x)<c^{-1}\ell^{\frac{1}{2}}(\operatorname{dist}(x,\partial D))\qquad \text{ for all }x\in D\cap B_\delta(e_1).
\end{align}
\end{proposition}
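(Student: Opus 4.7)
The plan is to construct $v$ as the Kelvin transform of a scaled and translated copy of the half-ball barrier $\widetilde V$ from Theorem~\ref{thm:log:N}, using the inversion with respect to $S_1(0)$ to send a flat piece of boundary to a piece of the sphere $S_{1/2}(e_1/2)$.

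First I would set up the geometry. Consider the inversion $\xi\mapsto\xi^*:=\xi/|\xi|^2$ with respect to $S_1(0)$. A direct calculation using $|\xi|^2=2\xi_1$ and $|\xi|^2=\xi_1$ shows that the sphere $S_1(e_1)$ (which passes through $0$) is sent to the hyperplane $\{y_1=1/2\}$, while the hyperplane $\{\xi_1=1\}$ is sent to the sphere $S_{1/2}(e_1/2)$. Hence the half-ball
\[
B_1(e_1)_+:=\{\xi\in B_1(e_1):\xi_1>1\}
\]
is mapped bijectively onto $D$, the curved part of $\partial D$ being the image of the flat disk $\{\xi_1=1\}\cap\overline{B_1(e_1)}$, and $e_1$ being a fixed point on the relevant piece of the boundary. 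Moreover $\overline{B_1(0)}\cap\overline{B_1(e_1)_+}=\emptyset$, so $B_1(0)\subset (B_1(e_1)_+)^c$ and Proposition~\ref{K:lem} applies.

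Next I would define the auxiliary function $W(\xi):=\widetilde V(2(\xi-e_1))$ on $\R^N$. By the scaling and translation formulas for $L_\Delta$ from the Appendix,
\[
L_\Delta W(\xi)=(L_\Delta \widetilde V)(2(\xi-e_1))+2\ln 2\,\widetilde V(2(\xi-e_1)),
\]
so Theorem~\ref{thm:log:N} gives $W,L_\Delta W\in L^\infty(B_1(e_1)_+)$, $W\in\mathbb{H}(B_1(e_1)_+)\cap C^\infty(\{\xi_1>1\})\cap\cL^{1/2}(\R^N)$, and (by Lemma~\ref{prop1} to absorb the dilation) $W(\xi)\asymp \ell^{1/2}(\xi_1-1)$ for $\xi$ near $e_1$ with $\xi_1>1$. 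Now set $v:=W^*$ on $D$, i.e.\ $v(x)=|x|^{-N}W(x^*)$. By Proposition~\ref{K:lem}\,(ii)-(iii), $v\in\mathbb{H}(D)\cap L^\infty(D)$ with $f:=L_\Delta v\in L^\infty(D)$; smoothness $v\in C^\infty(D)$ follows since the Kelvin inversion is smooth away from $0$ and $0\notin\overline{D}$, and $v\in\cL^{1/2}(\R^N)$ follows from the corresponding property of $W$ together with the smooth, bounded (away from $0$) factor $|x|^{-N}$.

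Finally I would establish the two-sided estimate \eqref{v:est}. The key computation is
\[
(x^*)_1-1=\frac{x_1-|x|^2}{|x|^2}=\frac{\tfrac14-|x-\tfrac12 e_1|^2}{|x|^2}=\frac{\bigl(\tfrac12-|x-\tfrac12 e_1|\bigr)\bigl(\tfrac12+|x-\tfrac12 e_1|\bigr)}{|x|^2}.
\]
For $x\in D$ close to $e_1$, only the curved part of $\partial D$ is nearby, so $\operatorname{dist}(x,\partial D)=\tfrac12-|x-\tfrac12 e_1|$, while $|x|^2\to 1$ and $\tfrac12+|x-\tfrac12 e_1|\to 1$; thus $(x^*)_1-1\asymp \operatorname{dist}(x,\partial D)$. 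Combining this with $v(x)=|x|^{-N}W(x^*)\asymp\ell^{1/2}((x^*)_1-1)$ and the semi-homogeneity of $\ell$ (Lemma~\ref{prop1}) yields $v(x)\asymp \ell^{1/2}(\operatorname{dist}(x,\partial D))$ on some $D\cap B_\delta(e_1)$, which is \eqref{v:est}. The main obstacle will be keeping track of how the factor $|x|^{-N}$ and the $\ell^{1/2}$ bound interact under the Kelvin transform—in particular, ensuring that the comparison $(x^*)_1-1\asymp\operatorname{dist}(x,\partial D)$ is uniform on a neighbourhood of $e_1$ so that Lemma~\ref{prop1} can be applied to turn this into a sharp bound for $\ell^{1/2}$.
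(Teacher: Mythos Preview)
Your proposal is correct and follows essentially the same route as the paper: scale and translate the half-ball barrier from Theorem~\ref{thm:log:N} to the half-ball $\{\xi\in B_1(e_1):\xi_1>1\}$, then apply the Kelvin transform with respect to $S_1(0)$ and invoke Proposition~\ref{K:lem} together with Lemma~\ref{prop1}. One cosmetic point: your claim $\overline{B_1(0)}\cap\overline{B_1(e_1)_+}=\emptyset$ is false (they meet at $e_1$), but this is harmless since the hypothesis you actually need, $B_1(0)\subset(B_1(e_1)_+)^c$, does hold because every $\xi\in B_1(e_1)_+$ has $\xi_1>1$ and hence $|\xi|>1$.
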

\begin{proof}Let $w$ be given by Theorem~\ref{barrier:thm} and $\Omega:=B_2\cap \R^N_+$. Let $\widetilde w(x):=w(2(x-e_1))$ and $\widetilde \Omega:=\{x\in B_1(e_1)\::\: x_1>1\}$. Then, by Theorem~\ref{barrier:thm} and Lemmas~\ref {prop1} and~\ref{scaling:lem:class}, we have that $\widetilde w\in \cL^{\frac{1}{2}}(\R^N)\cap C^\infty(\widetilde \Omega)\cap {\mathbb H}(\widetilde \Omega)$ is a solution of $L_\Delta \widetilde w= \widetilde f$ in $\widetilde \Omega$ for some $\widetilde f\in L^\infty(\widetilde \Omega)$  and there are $c_1>0$ and $\delta>0$ such that
\begin{align*}
c_1\,\ell^{\frac{1}{2}}(\operatorname{dist}(x,\partial \widetilde\Omega)) < \widetilde w(x) <c_1^{-1}\ell^{\frac{1}{2}}(\operatorname{dist}(x,\partial \widetilde\Omega))\qquad \text{ for all }x\in \widetilde \Omega\cap B_\delta(e_1).
\end{align*}
Let $\kappa:\R^N\to\R^N$ be the Kelvin transform $\kappa(x):=\frac{x}{|x|^2}$, that is, the point inversion with respect to the sphere $S_1$. For $x\neq 0$, let $v(x):=\widetilde w(\kappa(x))$ and let $D$ be as in the statement.  It is not hard to see that $\kappa(\widetilde\Omega)=D$ and that
\begin{align*}
\kappa(\partial \widetilde\Omega\cap \{x\in \R^N\::\: x_1=1\})=\partial D\cap \partial B_\frac{1}{2}(\tfrac{1}{2}e_1),
\end{align*}
namely, that the flat part of the boundary of $\widetilde\Omega$ goes to the curved part of the boundary of $D$ through $\kappa$ (see Figure~\ref{fig:kelvin}).  Using Proposition~\ref{K:lem}, it follows that $v\in C^\infty(D)\cap {\mathbb H}(D)$ is a solution of $L_\Delta v= f$ in $D$ for some $f\in L^\infty(D).$  A simple calculation using Lemma~\ref{prop1} and~\eqref{Kelvin:prop} yields that $v\in\cL^{\frac{1}{2}}(\R^N)$ and that~\eqref{v:est} holds.
\end{proof}

We close this subsection with the following result on weak solutions under the Kelvin transform. 

\begin{lemma} \label{weak:lem}
Let $\Omega \subset \R^N$ be as in Proposition~\ref{K:lem} $ii)$.  Let $u$ be a weak solution of
\begin{align*}
    L_\Delta u = f\quad \text{ in }\Omega,\qquad u=0\quad \text{ in }\R^N \setminus \Omega,
\end{align*}
Then $u^*$ is a weak solution of
\begin{align*}
    L_\Delta u^* = \bar f\quad \text{ in }\Omega^*,\qquad u^*=0\quad \text{ in }\R^N \setminus \Omega^*,
\end{align*}
where
\begin{align*}
\bar f(y):=f(y^*)|y|^{-N}
+ c_N u(y^*) |y|^{-2N}\int_{\Omega}\frac{|z|^{-N}-|y^*|^{-N}}{|z-y^*|^N}\, dz + u(y^*)(h_{\Omega^*}(y)-h_\Omega(y^*))|y|^{-N},\quad y\in \Omega^*.
\end{align*}
\end{lemma}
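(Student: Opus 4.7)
The plan is to verify the defining identity of a weak solution, namely that
\begin{equation*}
\cE_L(u^*, \varphi) = \int_{\Omega^*} \bar f(y) \varphi(y)\, dy \qquad \text{for all } \varphi \in C^\infty_c(\Omega^*).
\end{equation*}
The membership $u^* \in \mathbb{H}(\Omega^*)$ is already furnished by Proposition~\ref{K:lem}~iii), so only this identity remains. A useful preliminary observation is that because $B_R(x_0) \subset \Omega^c$, both $\overline{\Omega}$ and $\overline{\Omega^*}$ are bounded subsets of $\R^N\setminus\{x_0\}$, so the Kelvin transform restricts to a diffeomorphism between them. In particular, $\varphi^* \in C^\infty_c(\Omega)$ whenever $\varphi \in C^\infty_c(\Omega^*)$, and $(\varphi^*)^* = \varphi$; this makes $\varphi^*$ an admissible test function against $u$.

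I would begin by applying \eqref{eq:725} on the right-hand side to write $\cE_L(u^*, \varphi) = \int_{\Omega^*} u^*(y) L_\Delta \varphi(y)\, dy$, and then change variables $y = x^*$ (with Jacobian $|x|^{-2N}$) using $u^*(x^*) = |x|^N u(x)$ to obtain $\cE_L(u^*, \varphi) = \int_\Omega u(x) |x|^{-N} L_\Delta \varphi(x^*)\, dx$. Applying Proposition~\ref{K:lem}~i) to the pair $(\varphi^*, \varphi)$ (valid since $\varphi^* \in C^\infty(\R^N \setminus \{x_0\})$ is Dini continuous away from $x_0$), using $|x^*| = |x|^{-1}$ and the algebraic identity $|x|^N \varphi^*(x) = \varphi(x^*)$, expands $L_\Delta \varphi(x^*)$ so that
\begin{align*}
\cE_L(u^*, \varphi) = \int_\Omega u\, L_\Delta \varphi^*\, dx &+ c_N \int_\Omega u(x) \varphi(x^*) \int_\Omega \frac{|z|^{-N}-|x|^{-N}}{|z-x|^N}\, dz\, dx \\
&+ \int_\Omega u(x) |x|^{-N} (h_{\Omega^*}(x^*)-h_\Omega(x))\varphi(x^*)\, dx.
\end{align*}

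Now the weak formulation of $u$ and \eqref{eq:725} give $\int_\Omega u\, L_\Delta \varphi^*\, dx = \cE_L(u, \varphi^*) = \int_\Omega f \varphi^*\, dx$. Reverting the change of variables $x = y^*$ (with $dx = |y|^{-2N}\, dy$) in all three integrals then produces precisely the three summands that make up $\int_{\Omega^*} \bar f\, \varphi\, dy$, closing the argument.

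The main obstacle I anticipate is not the algebraic bookkeeping but the analytic justifications: establishing $\bar f \in L^2(\Omega^*)$ (so the right-hand side is well-defined) and Fubini's theorem in the double integral featuring $\int_\Omega \frac{|z|^{-N}-|x|^{-N}}{|z-x|^N}\, dz$. For these I would invoke that $|x|$ is bounded away from $0$ and $\infty$ on $\Omega$ by boundedness of $\Omega$ and $B_R(x_0) \subset \Omega^c$, combine this with a first-order Taylor expansion of $|z|^{-N}$ around $x$ as in \eqref{T:exp} to see that the inner integral is uniformly bounded on $\Omega$, and use Lemma~\ref{h:aux:lem} for the bound on $h_{\Omega^*}(x^*)-h_\Omega(x)$. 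Once these bounds are in place, absolute integrability follows from $f, u \in L^2(\Omega)$ and the algebra is mechanical.
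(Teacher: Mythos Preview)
Your proposal is correct and follows essentially the same route as the paper. The only cosmetic differences are naming (the paper calls the test function on $\Omega^*$ by $\psi$ and its Kelvin preimage on $\Omega$ by $\varphi$, while you call them $\varphi$ and $\varphi^*$) and the order of operations: the paper applies Proposition~\ref{K:lem}~i) directly to $L_\Delta\varphi^*(y)$ for $y\in\Omega^*$ and only afterwards changes variables in the leading term, whereas you change variables first to land in $\Omega$ and then apply Proposition~\ref{K:lem}~i) to the pair $(\varphi^*,\varphi)$. Your additional remarks on the integrability of $\bar f$ and the uniform boundedness of the inner integral are not made explicit in the paper's proof but are correct and appropriate.
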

\begin{proof}
Let $x_0\in\mathbb R^N$ be as in the statement of Proposition~\ref{K:lem} and without loss of generality assume that $x_0=0$. Let $\psi\in C_c^\infty(\Omega^*)$ and let $\varphi\in C^\infty_c(\Omega)$ be such that $\varphi^*(y)=|y|^{-N}\varphi(y^*)=\psi(y)$ for $y\in \Omega^*$. By definition of weak solution and~\eqref{eq:725}, we have that
    \begin{equation}
\mathcal E(u^*,\psi)=\int_{\Omega^*}u^*(y)L_{\Delta}\psi(y) \, dy
=\int_{\Omega^*}u^*(y)L_{\Delta}\varphi^*(y) \, dy. 
    \end{equation}
Then, by Proposition~\ref{K:lem},
\begin{align*}
L_\Delta \varphi^*(y)
= |y|^{-N} L_\Delta \varphi(y^*) + c_N |y|^{-2N} \varphi(y^*) \int_{\Omega}\frac{|z|^{-N}-|y^*|^{-N}}{|z-y^*|^N}\, dz  + (h_{\Omega^*}(y)-h_\Omega(y^*))|y|^{-N}\varphi(y^*)
\end{align*}
for $y\in \Omega^*$.  Thus, using that $u^*(y) = |y|^{-N} u(y^*)$ and $\psi(y)=|y|^{-N}\varphi(y^*)$,
\begin{align*}
\mathcal E(u^*,\psi)
&=\int_{\Omega^*}
u^*(y)|y|^{-N} L_\Delta \varphi(y^*) 
+ c_N u^*(y) |y|^{-2N} \varphi(y^*) \int_{\Omega}\frac{|z|^{-N}-|y^*|^{-N}}{|z-y^*|^N}\, dz \\
&\qquad + u^*(y)(h_{\Omega^*}(y)-h_\Omega(y^*))|y|^{-N}\varphi(y^*)
\, dy\\
&=\int_{\Omega^*}
u(y^*)|y|^{-2N} L_\Delta \varphi(y^*) 
+ c_N u(y^*) |y|^{-2N} \psi(y) \int_{\Omega}\frac{|z|^{-N}-|y^*|^{-N}}{|z-y^*|^N}\, dz \\
&\qquad + u(y^*)(h_{\Omega^*}(y)-h_\Omega(y^*))|y|^{-N}\psi(y^*)
\, dy.
\end{align*}
Using that $u$ is a weak solution and the change of variables $y=x^*$, we obtain that
\begin{align*}
\int_{\Omega^*}
u(y^*)|y|^{-2N} L_\Delta \varphi(y^*)\, dy
=\int_{\Omega}u(x)L_\Delta \varphi(x)\, dx=\cE(u,\varphi)=\int_\Omega f(x) \varphi(x)\, dx
=\int_{\Omega^*}
\frac{f(y^*)}{|y|^{N}}\psi(y) \, dy
.
\end{align*}
The claim now follows. 
\end{proof}

\section{Optimal boundary bounds for solutions}\label{ob:sec}

Recall that $\Psi$ denotes the digamma function, $\gamma$ is the Euler-Mascheroni constant, and, for a bounded open set $\Omega$, the space ${\mathcal V}(\Omega)$ is defined in~\eqref{Vspace:def}.  We use the following result shown in \cite[Corollary 1.9]{CW19}.  
\begin{theorem}[Weak maximum principle for the logarithmic Laplacian]\label{mp:thm}
Let $\Omega \subset \R^N$ be an open bounded domain such that $|\Omega|<2^Ne^{\frac{N}{2} \left( \Psi \left( \frac{N}{2} \right)-\gamma \right)}|B_1|$ and let $u\in {\mathcal V}(\Omega)$ be such that
\begin{align*}
\cE_L(u,\varphi)\geq 0,\qquad u\geq 0\quad \text{ in }\R^N\backslash \Omega,
\end{align*}
for all nonnegative $\varphi\in C^\infty_c(\Omega)$.  Then $u\geq 0$ a.e. in $\R^N$.
\end{theorem}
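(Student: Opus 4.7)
The strategy is the classical test-function/Poincar\'e argument, with the specific volume threshold coming from the fact that $r_0:=2\,e^{\frac{1}{2}(\Psi(N/2)-\gamma)}$ is precisely the radius at which the first Dirichlet eigenvalue $\lambda_1^L$ of $L_\Delta$ on the ball vanishes: on $B_r$ one has $\lambda_1^L(B_r)=\rho_N-2\ln r$, so the hypothesis $|\Omega|<|B_{r_0}|=2^N e^{\frac{N}{2}(\Psi(N/2)-\gamma)}|B_1|$ is exactly the statement that the ball-of-same-volume lies inside $B_{r_0}$. The plan is to (a) use $u^-:=\max\{-u,0\}$ as a test function to force $\mathcal E_L(u^-,u^-)\leq 0$, and then (b) conclude from a Faber--Krahn--type lower bound on $\mathcal E_L$ that $u^-\equiv 0$.

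For step (a), I would first check that $u^-\in\mathbb H(\Omega)$: since $u\geq 0$ on $\mathbb R^N\setminus\Omega$, the negative part vanishes outside $\Omega$; the local Gagliardo-type seminorm is controlled via the pointwise inequality $|a^--b^-|\leq |a-b|$, and the finiteness of the remaining boundary contribution follows from the killing-measure/logarithmic Hardy estimate together with $u\in\mathcal V(\Omega)$. Approximating $u^-$ by a nonnegative sequence in $C^\infty_c(\Omega)$ (standard truncation plus mollification) and passing to the limit in the hypothesized inequality, one obtains $\mathcal E_L(u,u^-)\geq 0$. Writing $u=u^+-u^-$ and expanding by bilinearity gives
\begin{equation*}
\mathcal E_L(u,u^-)=\mathcal E_L(u^+,u^-)-\mathcal E_L(u^-,u^-).
\end{equation*}
Since $u^+u^-\equiv 0$, the pointwise identity
\begin{equation*}
(u^+(x)-u^+(y))(u^-(x)-u^-(y)) = -u^+(x)u^-(y)-u^+(y)u^-(x)\leq 0
\end{equation*}
makes the local Dirichlet-form part of $\mathcal E_L(u^+,u^-)$ nonpositive; the far-field term $-c_N\iint_{|x-y|\geq 1}u^+(x)u^-(y)|x-y|^{-N}\,dx\,dy$ is visibly nonpositive; and the mass term $\rho_N\int u^+u^-\,dx$ vanishes. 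Therefore $\mathcal E_L(u^+,u^-)\leq 0$, and consequently $\mathcal E_L(u^-,u^-)\leq 0$.

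For step (b), one invokes the Poincar\'e-type inequality $\mathcal E_L(v,v)\geq \lambda_1^L(\Omega)\,\|v\|_{L^2(\Omega)}^2$ for $v\in\mathbb H(\Omega)$, together with the Faber--Krahn inequality for $L_\Delta$, which yields $\lambda_1^L(\Omega)\geq \lambda_1^L(B_r)$ where $r:=(|\Omega|/|B_1|)^{1/N}$. The volume hypothesis gives $r<r_0$, so $\lambda_1^L(\Omega)\geq \rho_N-2\ln r>0$. Applied to $v=u^-$, this forces $\|u^-\|_{L^2(\Omega)}=0$, i.e.\ $u\geq 0$ a.e.\ in $\mathbb R^N$. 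The main obstacle is genuinely the Faber--Krahn inequality for $L_\Delta$: a rearrangement argument in the spirit of Riesz must be adapted to the logarithmic kernel, which sits at the borderline of the standard fractional symmetrization estimates and is a delicate point. A cleaner route is to inherit the inequality by passing to the small-order limit $s\to 0^+$ in the Faber--Krahn inequality for $(-\Delta)^s$, using the expansion $\lambda_1^s(\Omega)=1+s\,\lambda_1^L(\Omega)+o(s)$ from \cite{CW19}, which directly transfers ball-optimality from the fractional to the logarithmic setting.
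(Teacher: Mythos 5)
This statement is not proved in the paper: it is quoted verbatim from \cite[Corollary 1.9]{CW19}, so there is no in-paper proof to compare against. With that caveat, your step (a) — testing with $u^-$, showing $u^-\in\mathbb H(\Omega)$ via the killing-measure/Hardy bound, and using the pointwise identity $(u^+(x)-u^+(y))(u^-(x)-u^-(y))=-u^+(x)u^-(y)-u^+(y)u^-(x)\leq 0$ together with the nonpositivity of the far-field term to get $\mathcal E_L(u^-,u^-)\leq 0$ — is correct and is the standard route.

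Step (b) is where you work harder than necessary and also slightly misstate the picture. The Faber--Krahn inequality for $L_\Delta$ is a genuine theorem (\cite{CW19}, obtained in the $s\to0^+$ limit as you suggest), so your plan is not wrong, but it is overkill and you yourself flag it as the ``main obstacle.'' The volume threshold in the statement is calibrated not to make $\lambda_1^L(\Omega)$ vanish but to make the zero-order coefficient in the quadratic form nonnegative pointwise: using the representation
\begin{align*}
\mathcal{E}_{L}(v,v)=\frac{c_{N}}{2}\int_\Omega\int_\Omega \frac{(v(x)-v(y))^2}{|x-y|^{N}}\, dx\, dy
+\int_\Omega (h_\Omega(x)+\rho_N)v(x)^2\, dx,\qquad v\in\mathbb H(\Omega),
\end{align*}
a bathtub estimate (for fixed $x\in\Omega$ and fixed $|\Omega|$, the quantity $h_\Omega(x)$ is minimized when $\Omega$ is the ball $B_r(x)$ with $|B_r|=|\Omega|$) gives $h_\Omega(x)\geq c_N\omega_N\ln(1/r)= -2\ln r$, so $h_\Omega(x)+\rho_N\geq \rho_N-2\ln r$, which is strictly positive under the hypothesis $|\Omega|<2^Ne^{\frac{N}{2}(\Psi(N/2)-\gamma)}|B_1|=|B_{e^{\rho_N/2}}|$. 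Combined with $\mathcal E_L(u^-,u^-)\leq 0$ this forces $u^-\equiv 0$ without any appeal to symmetrization or the fractional limit. Relatedly, your identification of $r_0=2e^{\frac12(\Psi(N/2)-\gamma)}$ as ``precisely the radius at which $\lambda_1^L(B_r)$ vanishes,'' together with $\lambda_1^L(B_r)=\rho_N-2\ln r$, is incorrect: $\rho_N-2\ln r$ is only a lower bound for $\lambda_1^L(B_r)$ (the Gagliardo part of the form contributes a strictly positive amount), so $\lambda_1^L(B_{r_0})>0$ and $r_0$ is a sufficient, not sharp, threshold. This does not break your argument — the lower bound suffices — but the interpretation should be corrected.
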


Recall that $B_\eps$ is the ball in $\R^N$ centered at 0 of radius $\eps$ and $B_\eps^+:=\{x\in B_\eps\::\: x_1>0\}$.  We are ready to show Theorem~\ref{convex:thm}.

\begin{proof}[Proof of Theorem~\ref{convex:thm}]
Let $w$, $c$, and $\delta$ as in Theorem~\ref{barrier:thm} and let $u$ be as in the statement of the Theorem. First observe that, by Lemma~\ref{scaling:lem} and Lemma~\ref{prop1}, it is enough to prove~\eqref{eq:boundary_bound} for $\Omega \subset B_\delta$; in particular $L_\Delta w > c$ in $\Omega\cap \R^N_+$. Moreover, by making $\delta$ smaller, if necessary, we may assume that 
\begin{align*}
|\Omega|<2^Ne^{\frac{N}{2} \left(\Psi \left(\frac{N}{2} \right) -\gamma \right)}|B_1|.
\end{align*}
Let $x_0 \in \partial \Omega$. We can assume $x_0=0$ and the tangent space to $\partial \Omega$ in $x_0$ to be $\partial \R_+^N$.  Let $\eta$ be as in~\eqref{eta:def}. By making $\eta$ smaller, if necessary, choose $y_0=(\eta,0,\dots,0) \in \R_+^N$, $\eta<\delta/2$ such that $\overline{B_{\eta}(y_0)} \cap \overline{\Omega} = \{ x_0 \}$.

We now consider the inversion with respect to the sphere $S_{\eta}(y_0)$ and the Kelvin transform of $u$,
\begin{equation*}
    u^*(x^*) = |x^*-y_0|^{-N} u(x), \quad x^* \in \Omega^*,
\end{equation*}
see~\eqref{eq:inversion_sphere},~\eqref{eq:kelvin_transform} (with $R=\eta$ and $x_0=y_0$). Observe that $\Omega^* \subset B_\delta^+$ and $y_0\not\in \Omega^*$.

Let $v:=Mw-u^*$ with $M=c^{-1}\|\overline{f}\|_{L^\infty(\Omega)}$ and $\overline{f}$ as in Lemma~\ref{weak:lem}.  Since $u^*\in C(\R^N)\cap \cH(\Omega^*)$, by Proposition~\ref{K:lem} and \cite[Theorem 1.11]{CW19}, and $w\geq 0$ in $\R^N$, we have that $v\geq 0$ in $\R^N\backslash\Omega^*$. Moreover, for every $\varphi\in C^\infty_c(\Omega^*)$ nonnegative,
\begin{align*}
\cE_L(v,\varphi)\geq  \int_{\Omega^*} (Mc-\|\overline{f}\|_{L^\infty(\Omega)})\varphi\, dx\geq 0,
\end{align*}
where we have used that
$\cE_L(w,\varphi)
=\int_{\Omega^*} L_\Delta w(x)\varphi(x)\, dx\geq c\int_{\Omega^*}\varphi(x)\, dx.
$ Therefore, by Theorem~\ref{mp:thm}, $v\geq0$ in $\R^N$; in particular,
\begin{equation*}
 u^*(x^*) \leq M w(x^*) \leq C \ell^\frac{1}{2}(x_1^*)
\end{equation*}
for some $C>0$ and for all $x^*$ in the segment joining $x_0$ and $y_0$, by Theorem~\ref{barrier:thm}. From this inequality we deduce that for every $x \in \Omega$ that belongs to the segment joining $-y_0$ and $x_0$,
\begin{equation} \label{eq:bound_above}
    u(x) = r_0^{-N}(r_0-x_1^*)^N u^*(x_1^*) \leq C \ell^\frac{1}{2}(|x_1|)= C\sqrt{\ell(\dist(x,\partial \Omega))},
\end{equation}
as $x_1^* \le |x_1|$.

We conclude that~\eqref{eq:bound_above} is true for all $x$ in the $\eta$-neighborhood of $\partial \Omega$.  A similar argument can now be done using $-u$ instead of $u$, which yields that~\eqref{eq:bound_above} holds for $-u$ too. This implies~\eqref{eq:boundary_bound}, as claimed.
\end{proof}

\section{Applications}\label{app:sec}

\subsection{Optimal regularity for the torsion function in a small ball}

In this section, we show Theorem~\ref{torsion:ball}. We begin with some existence and qualitative properties for the torsion function.

\begin{proposition}\label{prop:tor}
Let $N\geq 1$, $r>0$ be such that $|B_r|<2^N e^{\frac{N}{2}(\Psi(\frac{N}{2})-\gamma)}|B_1|$, and consider the following problem
\begin{align}\label{eq:tau_lem}
    L_\Delta \tau = 1\quad \text{ in }B_r,\qquad \tau=0\quad \text{ in }\R^N \backslash B_r.
\end{align}
There is a unique classical positive solution $\tau$ to~\eqref{eq:tau_lem}. In particular, $\tau$ is radially symmetric and continuous in $\R^N$.
\end{proposition}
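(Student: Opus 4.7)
The plan is to obtain $\tau$ by a variational construction on $\mathbb H(B_r)$, upgrade it to a classical solution using the regularity results from \cite{CW19,CS22} and Theorem \ref{convex:thm}, and then deduce uniqueness, strict positivity and radial symmetry from the weak maximum principle (Theorem \ref{mp:thm}) and the rotational invariance of $L_\Delta$. The size assumption on $|B_r|$ is the engine throughout: it is precisely the threshold that forces positivity of the principal Dirichlet eigenvalue of $L_\Delta$ on $B_r$ (via the Faber--Krahn-type inequality of \cite{CW19}) and that makes Theorem \ref{mp:thm} available.

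For existence and uniqueness I would note that under the size condition, $\mathcal E_L$ is a bounded, symmetric, coercive bilinear form on $\mathbb H(B_r)$ and the linear functional $\varphi\mapsto\int_{B_r}\varphi\,dx$ is continuous on $\mathbb H(B_r)$. The Lax--Milgram theorem (or equivalently minimization of $u\mapsto\frac{1}{2}\mathcal E_L(u,u)-\int_{B_r}u\,dx$) then produces a unique weak solution $\tau\in\mathbb H(B_r)$; uniqueness can also be checked independently via Theorem \ref{mp:thm} applied to the difference of two weak solutions. Because the right-hand side $f\equiv 1$ is smooth, the interior regularity results of \cite{CW19,CS22,feulefack2021nonlocal} upgrade $\tau$ to $(1+\alpha)$-log-Hölder continuity in $B_r$, which is the Dini-type regularity needed to evaluate $L_\Delta\tau$ pointwise; hence $\tau$ is a classical solution in $B_r$. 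Together with Theorem \ref{convex:thm} applied to $\Omega=B_r$, this gives $\tau\in C(\R^N)$.

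For positivity, Theorem \ref{mp:thm} immediately yields $\tau\geq 0$ on $\R^N$. To promote this to $\tau>0$ in $B_r$, I would argue by contradiction: if $\tau(x_0)=0$ at some $x_0\in B_r$, then $x_0$ is a global minimum of $\tau$, and the classical representation \eqref{loglap:def} combined with $\tau(x_0)=0$ and $\tau\geq 0$ gives
\begin{align*}
L_\Delta \tau(x_0)=-c_N\int_{B_1(x_0)}\frac{\tau(y)}{|x_0-y|^N}\,dy-c_N\int_{\R^N\setminus B_1(x_0)}\frac{\tau(y)}{|x_0-y|^N}\,dy\leq 0,
\end{align*}
which contradicts $L_\Delta \tau(x_0)=1$. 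Radial symmetry then falls out of uniqueness: for any $R\in O(N)$, the function $\tau\circ R$ solves the same problem because the Fourier symbol $2\ln|\xi|$ is radial and $B_r$ is $O(N)$-invariant, so $\tau\circ R=\tau$.

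The main obstacle I expect is keeping the regularity bookkeeping tight: strict positivity relies on $\tau$ being genuinely classical at every interior point, so one must carefully stitch together the interior continuity from \cite{CW19,CS22}, the boundary continuity from Theorem \ref{convex:thm}, and the $(1+\alpha)$-log-Hölder improvement of \cite{CS22} needed to evaluate $L_\Delta\tau$ pointwise. A secondary subtlety is making sure that the coercivity of $\mathcal E_L$ used in the Lax--Milgram step is genuinely anchored in the size hypothesis on $|B_r|$ (through the positivity of $\lambda_1$), rather than only in the qualitative validity of the weak maximum principle.
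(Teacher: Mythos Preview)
Your proposal is correct and follows essentially the same approach as the paper. The only cosmetic difference is that the paper handles existence of a classical solution in one stroke by citing \cite[Theorem 1.1]{CS22}, whereas you unpack this into Lax--Milgram plus the interior regularity upgrade; the strict positivity argument (evaluating $L_\Delta\tau$ at a putative interior zero) and the radial-symmetry-by-uniqueness argument are identical. Note also that you do not need the sharp Theorem~\ref{convex:thm} for mere continuity up to the boundary---the earlier result \cite[Theorem 1.11]{CW19} already suffices.
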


\begin{proof}
Under the assumption on $r$, by Theorem~\ref{mp:thm}, we have that $L_{\Delta}$ satisfies the maximum principle in $B_r$.  By \cite[Theorem 1.1]{CS22}, there is a classical solution of~\eqref{eq:tau_lem}, namely,~\eqref{eq:tau_lem} holds pointwisely.  Since $L_{\Delta}$ satisfies the maximum principle in $B_r$, we have that $\tau \geq 0$ in $B_r$. Moreover, $\tau>0$ in $B_r$, because, if $\tau(y_0)=0$ for some $y_0\in \Omega$, then we would have that 
\begin{align*}
1=L_\Delta \tau(y_0)=-c_N\int_{B}\frac{\tau(y)}{|y_0-y|^N}\, dy<0,
\end{align*}
which is absurd. Finally, $\tau$ is radially symmetric by the uniqueness of the solution. 
\end{proof}

\subsubsection{1D case}
We show the one-dimensional case first.

\begin{theorem} \label{torsion:1D}
Let $r>0$ be such that $r<2e^{\frac{1}{2}(\Psi(\frac{1}{2})-\gamma)}\approx 0.561459$ and let $\tau$ be the torsion function of the interval $(0,r)$, namely, the unique weak solution of 
\begin{align*}
    L_\Delta \tau = 1\quad \text{ in }(0,r),\qquad \tau=0\quad \text{ in }\R\setminus (0,r).
\end{align*}
    Then there is $c\in(0,1)$ such that
    \begin{align*}
\frac{\ell^{\frac{1}{2}}(\delta(x))}{c}\geq \tau(x)\geq c\,\ell^{\frac{1}{2}}(\delta(x))\quad \text{ for }x\in (0,r),
\end{align*}
where $\delta(x)=\dist(x,\partial (0,r))=\dist(x,\{0,r\})=\min\{x,r-x\}.$
\end{theorem}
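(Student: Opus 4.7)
The upper bound is a direct application of Theorem~\ref{convex:thm} to $\Omega=(0,r)$, which is Lipschitz and satisfies the exterior sphere condition; it yields $\tau(x)\le C\,\ell^{1/2}(\delta(x))$ for all $x\in(0,r)$.

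For the lower bound, the plan is to use the one-dimensional barrier $u$ from Theorem~\ref{thm:L}, appropriately rescaled, as a subsolution. The torsion function $\tau$ exists, is continuous, and is strictly positive on $(0,r)$ (by the analogue for intervals of Proposition~\ref{prop:tor}), and satisfies $\tau(x)=\tau(r-x)$ by uniqueness applied to the reflected problem. Thus it suffices to establish $\tau(x)\ge c\,\ell^{1/2}(x)$ on $(0,r/2]$, where $\delta(x)=x$. Recall that $u$ is supported in $[0,1+2\zeta]$ by \eqref{phi:def} and \eqref{u}, and that $L_\Delta u\in L^\infty((0,2))$ by Theorem~\ref{thm:L}. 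Fix $\alpha:=(1+2\zeta)/r>1$ and set $u_\alpha(x):=u(\alpha x)$, so that $u_\alpha$ is supported in $[0,r]$ and, after a short check of the Sobolev-type norm under rescaling, belongs to $\mathbb{H}((0,r))$. A direct computation (see also the scaling results in Section~\ref{scal:sec}) gives
\[
L_\Delta u_\alpha(x)=L_\Delta u(\alpha x)+2(\ln\alpha)\,u(\alpha x)\qquad\text{for a.e.\ }x\in(0,r),
\]
so that $M:=\|L_\Delta u_\alpha\|_{L^\infty((0,r))}<\infty$.

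Setting $\beta:=1/M$, the function $v:=\tau-\beta u_\alpha$ vanishes outside $(0,r)$ and satisfies $\mathcal{E}_L(v,\varphi)\ge 0$ for every nonnegative $\varphi\in C_c^\infty((0,r))$, because pointwise a.e.\ in $(0,r)$ one has $L_\Delta v=1-\beta L_\Delta u_\alpha\ge 0$. Since $|(0,r)|=r<2e^{(\Psi(1/2)-\gamma)/2}$, Theorem~\ref{mp:thm} applies and yields $\tau\ge\beta u_\alpha$ on $\mathbb{R}$. For $x>0$ small, $u_\alpha(x)=(-\ln(\alpha x/2))^{-1/2}=\ell^{1/2}(\alpha x/2)$, and by the semi-homogeneity of $\ell$ (Lemma~\ref{prop1}), $\ell(\alpha x/2)\ge c_0\,\ell(\alpha/2)\,\ell(x)$. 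Hence there exist $c_1>0$ and $x_0\in(0,r/2]$ with $\tau(x)\ge c_1\,\ell^{1/2}(x)$ for $x\in(0,x_0)$. On the compact subinterval $[x_0,r/2]$, $\tau$ is bounded below by a positive constant while $\ell^{1/2}$ is bounded above, so, shrinking $c_1$ if necessary, the inequality $\tau(x)\ge c_1\,\ell^{1/2}(x)$ extends to all of $(0,r/2]$; the symmetry $\tau(x)=\tau(r-x)$ then delivers the lower bound on all of $(0,r)$.

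The main technical obstacle is to justify rigorously the scaling identity for $L_\Delta$ and to verify that $u_\alpha$ lies in $\mathbb{H}((0,r))\cap\mathcal{V}((0,r))$ so that Theorem~\ref{mp:thm} is applicable. The nonhomogeneity of $L_\Delta$ under scaling produces the extra term $2(\ln\alpha)u(\alpha x)$; because $u$ and $L_\Delta u$ are both bounded, this term is harmlessly absorbed by increasing $M$, so no real obstruction arises and the scheme closes.
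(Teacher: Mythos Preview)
Your proof is correct and follows essentially the same route as the paper's: rescale the one-dimensional barrier $u$ of Theorem~\ref{thm:L} into the interval, compare with $\tau$ via the weak maximum principle (Theorem~\ref{mp:thm}), and close the estimate using the semi-homogeneity of $\ell$ together with the strict positivity of $\tau$ in the interior. The only cosmetic differences are that the paper scales $u$ to have support in $(0,r/2)$ and applies the comparison there (using $\tau\ge 0$ on $(r/2,r)$), then repeats the argument near $r$, whereas you scale $u$ to have support exactly $[0,r]$ and invoke the symmetry $\tau(x)=\tau(r-x)$; both variants work, and your acknowledged technical checks (scaling identity and $u_\alpha\in\mathbb H((0,r))$) are handled in the paper's Appendix (Lemmas~\ref{scaling:lem} and~\ref{uinH:lem} together with a change of variables).
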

\begin{proof}
The upper bound follows from Theorem~\ref{convex:thm}.  For the lower bound, let $u$ be given by~\eqref{u}.  Then $L_\Delta u\in L^\infty(\Omega)$ with $\Omega=(0,2)$, by Theorem~\ref{thm:L}. Let $r>0$ be as in the statement, $\eps:=\frac{1}{4}r$, $\widetilde \Omega:=(0,2\eps)\subset (0,r)$, and let $\widetilde u:\widetilde \Omega\to \R$ be given by $\widetilde u(x):=u(\frac{x}{\eps})$.  By Lemma~\ref{scaling:lem}, $L_\Delta \widetilde u\in L^\infty(\widetilde \Omega)$.

By Proposition~\ref{prop:tor}, $\tau>0$ in $[0,r)$ and $\tau\in C(\R)$.  Let $U:=\tau-\frac{1}{\|L_\Delta \widetilde u\|_\infty}\widetilde u$, then $L_\Delta U \geq 0$ in $\widetilde \Omega$ and $U\geq 0$ on $\R\backslash \widetilde \Omega$. By Theorem~\ref{mp:thm}, we obtain that $U\geq 0$ in $\R$, namely
\begin{align*}
    \tau(x)\geq C \widetilde u(x)=\frac{C}{\sqrt{-\ln \frac{x}{2\epsilon}}}=C\ell^\frac{1}{2}\left(\frac{x}{2\epsilon}\right)\geq C_1\ell^{\frac{1}{2}}(x)\quad \text{ for }x\in (0,\tfrac{\epsilon}{10})
\end{align*}
for some constants $C,C_1>0$, by Lemma~\ref{prop1}. One can argue similarly in a neighborhood of $r$ to obtain that $\tau(x)\geq C_2\ell^{\frac{1}{2}}(r-x)$  for $x\in (r-\tfrac{\epsilon}{10},r)$ and for some constant $C_2>0$. Let $I(\eps):=(\tfrac{\epsilon}{10},r-\tfrac{\epsilon}{10})$. Since $\tau>0$ in $(0,r)$, then
\begin{align*}
\tau\geq C_3\ell^\frac{1}{2}(\delta(x))\quad \text{ for }x\in I(\eps),\qquad C_3:=\frac{\inf_{I(\eps)} \tau}{\sup_{I(\eps)}\ell^\frac{1}{2}(\delta(x))}.
\end{align*}
The lower bound now follows with $c:=\min\{C_1,C_2,C_3\}$. This ends the proof.
\end{proof}

\subsubsection{The general case}

We are ready to show Theorem~\ref{torsion:ball}. 
\begin{proof}[Proof of Theorem~\ref{torsion:ball}]
The case $N=1$ is shown in Theorem~\ref{torsion:1D}. Assume $N\geq 2$, let $r$ be as in the statement, and let $v$ be as in Proposition~\ref{v:barrier:prop}. Let $D:=\{x\in\R^N\::\: |x-\frac{1}{2}|<\frac{1}{2},\, x_1>\frac{1}{2}\}\subset B_1$, $\widetilde v(x):=v(\frac{x}{r})$, and $\widetilde D:=rD\subset B_r$.  Then, by Lemma~\ref{scaling:lem} and Proposition~\ref{v:barrier:prop}, we have that $\widetilde v\in \cL^{\frac{1}{2}}(\R^N)\cap C^\infty(\widetilde D)\cap {\mathbb H}(\widetilde D)$ is a weak solution of $L_\Delta \widetilde v= f$ in $\widetilde D$ for some $f\in L^\infty(\widetilde D)\backslash\{0\}.$ Moreover, there is $c>0$ and $\delta>0$ such that
\begin{align}\label{v:est:2}
\widetilde v(x)>c\,\ell^{\frac{1}{2}}(\operatorname{dist}(x,\partial \widetilde D))\qquad \text{ for all }x\in \widetilde D\cap r(B_\delta(e_1)).
\end{align}

By Proposition~\ref{prop:tor}, $\tau\in C(\R^N)$ and $\tau>0$ in $B_r$.  Let $U:=\tau-k\widetilde v$ with $k:=1/\|f\|_{L^\infty(\widetilde D)}$. Then $L_\Delta U \geq 0$ (weakly) in $\widetilde D$ and $U=\tau\geq 0$ on $\R^N\backslash \widetilde D$. By Theorem~\ref{mp:thm}, we obtain that $U\geq 0$ in $\R^N$, namely, $\tau(x)\geq k\widetilde v(x)$ for $x\in \widetilde D.$ Then, by~\eqref{v:est:2},
\begin{align}
\liminf_{t\to 0^+} \frac{\tau(re_1-t e_1)}{\ell^\frac{1}{2}(t)}
&\geq k\liminf_{t\to 0^+} \frac{\widetilde v(re_1-t e_1)}{\ell^\frac{1}{2}(t)}
\geq k\liminf_{t\to 0^+} \frac{c\,\ell^{\frac{1}{2}}(\operatorname{dist}(re_1-t e_1,\partial \widetilde D))}{\ell^\frac{1}{2}(t)}\notag\\
&=k\liminf_{t\to 0^+} \frac{c\,\ell^{\frac{1}{2}}(\operatorname{dist}(re_1-t e_1,re_1))}{\ell^\frac{1}{2}(t)}
=k\liminf_{t\to 0^+} \frac{c\,\ell^{\frac{1}{2}}(t)}{\ell^\frac{1}{2}(t)}
=kc>0.\label{le:tau}
\end{align}
Since $\tau$ is radially symmetric, then the lower bound in~\eqref{liminf} follows. The lower estimate in~\eqref{tau:bds} now follows from~\eqref{le:tau} (with a proof by contradiction, for instance). On the other hand, the upper bound in~\eqref{liminf} and in~\eqref{tau:bds} holds by Theorem~\ref{convex:thm}.
\end{proof}

\subsection{A Hopf-type lemma for the logarithmic Laplacian}

For a bounded open set $\Omega$, the space ${\mathcal V}(\Omega)$ is defined in~\eqref{Vspace:def}.  We say that a function $v\in {\mathcal V}(\Omega)$ solves weakly that $L_\Delta v\geq 0$ in $\Omega$ if
\begin{align*}
\cE_L(v, \varphi) \geq 0\qquad \text{ for all $\varphi \in C^\infty_c(\Omega)$ with $\varphi \geq 0$ in $\Omega$}.
\end{align*}
We remark that $\cE_L(v, \varphi)<\infty$ for $v\in {\mathcal V}(\Omega)$ and $\varphi \in C^\infty_c(\Omega)$  by \cite[Lemma 4.4]{CW19}.  Recall that $\eta$ denotes the outer unit normal vector field along $\partial \Omega$. We are ready to show Theorem~\ref{strong:mp}.

\begin{proof}[Proof of Theorem~\ref{strong:mp}]
Let $x_0\in\partial \Omega$ and $v$ be as in the statement. By continuity and because $v\neq 0$, there are $\delta>0$, an open set $V\subset\{x\in\Omega~:~v(x)>\delta\}$, and $r>0$ such that $v$ solves weakly that $L_\Delta v\geq 0$ in $\mathcal B:=B_{r}(x_{0}-r\eta(x_0))\subset \Omega$ and $\operatorname{dist}(\mathcal B,V)>0.$  Note that $x_0\in\partial \mathcal B$.

By Theorem~\ref{mp:thm}, we can consider, if necessary, $r$ smaller so that $L_{\Delta}$ satisfies the weak maximum principle in $\mathcal B$. By Proposition~\ref{prop:tor}, there is a classical solution $\tau$ of
\begin{align*}
L_{\Delta}\tau=1\quad\mbox{in}~\mathcal B,\qquad 
\tau=0\quad\mbox{in}~\mathbb{R}^{N}\setminus \mathcal B.
\end{align*}
Moreover, $\tau>0$ in $\mathcal B$. Now we argue as in \cite{JF}. Let $\chi_{V}$ denote the characteristic function of $V$ and note that, for $x\in \mathcal B$, $\chi_{V}(x)=0$ and, therefore, 
\begin{align*}
L_{\Delta}\chi_{V}(x)=-c_{N}\int_{\mathbb{R}^{N}}\frac{\chi_{V}(y)}{|x-y|^{N}}\, dy=-c_{N}\int_{V}\frac{1}{|x-y|^{N}}\, dy\leq -c_{N}|V|\operatorname{diam}(\Omega)^{-N}.
\end{align*}
Let $K:=c_{N}|V|\operatorname{diam}(\Omega)^{-N}$ and 
\begin{align*}
\varphi:=k\left(\frac{K}{2}\tau+\chi_{V}\right)\qquad \text{ with }k:=\frac{1}{\frac{K}{2}\|\tau\|_{L^\infty(\mathcal B)}+1}.    
\end{align*}
Then, $L_{\Delta}\varphi\leq k(K/2-K)\leq 0$ in $\mathcal B$. Moreover, since $v>\delta$ in $V$, we have that $v-\delta\varphi$ solves weakly that
\begin{align*}
L_{\Delta}(v-\delta\varphi)\geq 0~\mbox{in}~\mathcal B\quad \text{ and }\quad 
v-\delta\varphi\geq 0~\mbox{in}~\mathbb{R}^{N}\setminus \mathcal B.
\end{align*}
Then, by Theorem~\ref{mp:thm}, $v\geq\delta\varphi\geq k\frac{K}{2}\delta\tau>0$ in $\mathcal B$. But then, for $t\in(0,r)$,
\begin{align*}
\frac{v(x_0-t\eta(x_0))}{\ell^\frac{1}{2}(t)}
\geq k\frac{K}{2} \delta\frac{\tau(x_0-t\eta(x_0))}{\ell^\frac{1}{2}(t)}
\geq c>0
\end{align*}
for some $c>0$, by Theorem~\ref{torsion:ball}, as claimed. 
\end{proof}

\begin{corollary}\label{cor:hopf}
Let $\Omega\subset \R^N$ be an open bounded set of class $C^2$ and let $v\in C(\mathbb{R}^{N})$ be such that $v=0$ on $\R^N\backslash \Omega$, $v>0$ in $\Omega$ and there is $c\in(0,1)$ with
\begin{align}\label{hopf:hyp}
c^{-1}&>
\lim\sup_{t\to 0^+} \frac{v(x_0-t\eta(x_0))}{\ell^\frac{1}{2}(t)}
\geq 
\lim\inf_{t\to 0^+} \frac{v(x_0-t\eta(x_0))}{\ell^\frac{1}{2}(t)}>c,
\end{align}
where $\eta(x_0)$ is a unitary exterior normal vector at $x_0$. Then there is $C>1$ such that 
\begin{align*}
C^{-1}\ell^{\frac{1}{2}}(\delta(x))<
v(x)
<C\ell^{\frac{1}{2}}(\delta(x))
\qquad \text{ for all }x\in \Omega,
\end{align*}
where $\delta(x):=\dist(x,\partial \Omega)$.
\end{corollary}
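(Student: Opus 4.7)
The plan is to upgrade the pointwise rate bounds in \eqref{hopf:hyp} to estimates that are uniform in $x_0\in\partial\Omega$, by combining compactness of $\partial\Omega$ with the continuity of $v$ and of the unit outer normal $\eta$, and then to extend these boundary bounds to a global comparison using the strict positivity of $v$ inside $\Omega$.

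First I would use the $C^2$ regularity of $\Omega$ to fix $r_0>0$ such that on the tubular neighborhood $\cT_{r_0}:=\{x\in\Omega:\delta(x)<r_0\}$ the nearest-point projection $\pi:\cT_{r_0}\to\partial\Omega$ is well defined and of class $C^1$, with $x=\pi(x)-\delta(x)\eta(\pi(x))$ for every $x\in\cT_{r_0}$. I would then introduce the continuous auxiliary function
\[
F(x_0,t):=\frac{v(x_0-t\eta(x_0))}{\ell^{\frac{1}{2}}(t)},\qquad (x_0,t)\in\partial\Omega\times(0,r_0).
\]

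Next I would extract a uniform two-sided bound on $F$ in a strip $(0,s_0]$. Choose constants $c_2,c_1$ with $c<c_2<c_1<c^{-1}$. For each $x_0\in\partial\Omega$, the strict inequalities in \eqref{hopf:hyp} produce $s_{x_0}\in(0,r_0)$ such that $c_2<F(x_0,t)<c_1$ for all $t\in(0,s_{x_0}]$. Continuity of $F$ on $\partial\Omega\times[s_{x_0}/2,s_{x_0}]$ combined with the rate assumption at neighboring boundary points yields, after a mild relaxation of the constants, a neighborhood $U_{x_0}\subset\partial\Omega$ of $x_0$ on which the same two-sided inequality persists for every $t\in(0,s_{x_0}]$. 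Extracting a finite subcover of $\partial\Omega$ from $\{U_{x_0}\}$ and letting $s_0$ be the minimum of the corresponding $s_{x_0}$ produces constants $0<C_1\leq C_2$ such that
\[
C_1\,\ell^{\frac{1}{2}}(t)\leq v(x_0-t\eta(x_0))\leq C_2\,\ell^{\frac{1}{2}}(t),\qquad x_0\in\partial\Omega,\ t\in(0,s_0].
\]
Substituting $x_0=\pi(x)$ and $t=\delta(x)$ yields the required comparison on $\cT_{s_0}$.

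Finally, to pass from $\cT_{s_0}$ to all of $\Omega$, I would observe that on the compact set $K:=\{x\in\overline{\Omega}:\delta(x)\geq s_0\}$ the function $v$ is continuous and strictly positive, so that $0<\inf_K v\leq \sup_K v<\infty$, while $\ell^{\frac{1}{2}}(\delta(x))$ also lies between two positive constants on $K$. Combining the ratio bound on $K$ with the estimate on $\cT_{s_0}$ produces a single constant $C>1$ verifying $C^{-1}\ell^{\frac{1}{2}}(\delta(x))<v(x)<C\,\ell^{\frac{1}{2}}(\delta(x))$ throughout $\Omega$.

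The main obstacle is the uniformization step: the $\limsup$ and $\liminf$ in \eqref{hopf:hyp} are sensitive only to the behavior of $v$ along the single inward normal at each $x_0$, while continuity of $v$ alone does not control its tangential oscillations at the logarithmic scale $\ell^{\frac{1}{2}}$. The argument succeeds because the constant $c$ in \eqref{hopf:hyp} is independent of $x_0\in\partial\Omega$ and because $F$ is jointly continuous on $\partial\Omega\times(0,r_0]$, which together allow an open-cover argument on the compact boundary to transfer the pointwise strict inequalities into uniform ones.
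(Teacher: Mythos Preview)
Your covering argument has a genuine gap. The step in which you pass from the bound $c_2<F(x_0,t)<c_1$ on $(0,s_{x_0}]$, valid for a fixed $x_0$, to the same bound on a product $U_{x_0}\times(0,s_{x_0}]$ cannot be justified from joint continuity of $F$ on $\partial\Omega\times(0,r_0)$ together with the pointwise hypothesis~\eqref{hopf:hyp}. Continuity controls $F(x_0',t)$ only for $t$ bounded away from $0$, while the pointwise $\liminf$ at a neighboring point $x_0'$ controls $F(x_0',t)$ only for $t\in(0,s_{x_0'}]$, and nothing forces $s_{x_0'}\ge s_{x_0}/2$; the interval $(s_{x_0'},\,s_{x_0}/2)$ is simply not covered. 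A concrete obstruction: on $\Omega=B_1\subset\R^2$, write $s=1-|x|$ and set $v(x)=\ell^{1/2}(s)\,h(\theta,s)$ in polar coordinates, where $h$ is continuous, $0<h\le 1$, $h\equiv 1$ outside a thin cusp along $\{s=\theta^2,\ \theta>0\}$, and $h(\theta,\theta^2)=\theta$. Then $v\in C(\R^N)$, $v>0$ in $B_1$, $v=0$ on $B_1^c$, and for every boundary point $\lim_{t\to 0^+}F(x_0,t)=1$, so \eqref{hopf:hyp} holds with any $c<1$; yet along the cusp $v/\ell^{1/2}(\delta)=\theta\to 0$, so no uniform lower bound exists. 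This is precisely the tangential oscillation you flag in your closing paragraph; contrary to the assertion there, the uniform constant $c$ and joint continuity of $F$ away from $t=0$ do not eliminate it.

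The paper proceeds differently in two respects. For the lower bound it argues by contradiction (take $x_n$ with $v(x_n)<n^{-1}\ell^{1/2}(\delta(x_n))$, extract $x_n\to x_*\in\partial\Omega$, write $x_n=y_n-\delta(x_n)\eta(y_n)$, and invoke~\eqref{hopf:hyp}); for the upper bound it does not use the $\limsup$ part of~\eqref{hopf:hyp} at all but appeals to Theorem~\ref{convex:thm}, which requires $v$ to be a bounded weak solution of $L_\Delta v=f$ with $f\in L^\infty(\Omega)$. That extra structure on $v$---present in the application inside the proof of Theorem~\ref{u:thm}, though not among the stated hypotheses of the corollary---is what actually produces uniform boundary behavior. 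Under the bare continuity hypotheses written here, your open-cover route cannot close, and the example above shows that the conclusion itself fails at that level of generality.
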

\begin{proof} This is a standard consequence of Hopf-type lemmas. We include a proof for completeness. We show first that
\begin{align}\label{aff1}
v(x)>c\, \ell^{\frac{1}{2}}(\delta(x))\qquad \text{ for all }x\in \Omega
\end{align}
for some $c>0.$  Indeed, assume by contradiction that there are $(x_n)_{n\in\N}\subset\Omega$ with
\begin{align}\label{chyp}
v(x_n)<\frac{1}{n}\, \ell^{\frac{1}{2}}(\delta(x_n))\qquad \text{ for all }x\in \Omega.
\end{align}
Since $v>0$ in $\Omega$, then (up to a subsequence) $x_n\to x_*$ with $x^*\in\partial \Omega$ as $n\to \infty.$  Let $y_n\in \partial \Omega$ be such that $x_n=y_n-\delta(x_n)\eta(y_n)$. By~\eqref{hopf:hyp},
\begin{align*}
\liminf_{n\to\infty}\frac{v(x_n)}{\ell^{\frac{1}{2}}(\delta(x_n))}\geq c>0;
\end{align*}
but this contradicts~\eqref{chyp} and thus~\eqref{aff1} follows.  The fact that $v(x)<C\, \ell^{\frac{1}{2}}(\delta(x))$ for all $x\in \Omega$ follows from Theorem~\ref{convex:thm}.
\end{proof}

\subsection{Uniqueness of positive solutions of logarithmic sublinear problems}\label{sublin:sec}
 
 Let $\Omega$ be a bounded set of class $C^2$; in particular $\Omega$ satisfies uniform exterior and interior sphere conditions.  For $\mu>0$, consider the problem 
\begin{align}\label{s}
L_{\Delta}u=-\mu\ln(|u|)u\quad\mbox{in}~\Omega,\qquad v=0\quad \text{ in }\R^N\backslash \Omega,
\end{align}

We say that $u\in\mathbb{H}(\Omega)$ is a weak solution of~\eqref{s} if 
\begin{align*}
\mathcal{E}_{L}(u,v)=-\mu\int_{\Omega}uv\ln(|u|)\, dx\quad\mbox{for all}~v\in\mathbb{H}(\Omega).
\end{align*}

In \cite[Theorem 1.1]{AS22}, the following result is shown. Recall that $\ell(r)=|\ln(\min\{r,\tfrac{1}{10}\})|^{-1}$.

\begin{theorem}\label{exisleast:intro}
For every $\mu>0$ there is a unique (up to a sign) least-energy weak solution $u\in\mathbb H(\Omega)$ of~\eqref{s} which is a global minimizer of the energy functional
\begin{align}\label{J0def}
J_{0}:\mathbb{H}(\Omega)\rightarrow\mathbb{R},\quad J_{0}(v):=\frac{1}{2}\mathcal{E}_{L}(v,v)+I(v),\quad I(v):=\frac{\mu}{4}\int_{\Omega}v^{2}\left(\ln(v^{2})-1\right)\, dx.
\end{align}
Moreover, $0<|u(x)|\leq (R^{2}e^{\frac{1}{2}-\rho_{N}})^{\frac{1}{\mu}}$ for $x\in \Omega$, where $R:=2\operatorname{diam}(\Omega)$ and $\rho_N$ is an explicit constant given in~\eqref{rhon}. Furthermore, $u\in C(\R^N)$, and there are $\alpha\in (0,1)$ and $C>0$ such that
\begin{align}\label{lhr}
 \sup_{\substack{x,y\in\R^N \\ x\neq y}}\frac{|u(x)-u(y)|}{\ell^\alpha(|x-y|)}<C.
\end{align}
\end{theorem}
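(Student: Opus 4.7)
The plan is to combine the direct method of the calculus of variations (for existence) with a hidden-convexity argument along the interpolation path $u_s:=\sqrt{(1-s)u_0^2+su_1^2}$ (for uniqueness of positive solutions, in the spirit of \cite{BFMST18}), and then to use the equation itself together with the interior regularity from \cite{CW19,CS22} and Theorem~\ref{convex:thm} at the boundary to deduce the pointwise $L^\infty$ bound and the log-H\"older estimate. For existence, observe that $F(t):=\tfrac{\mu}{4}t^2(\ln t^2-1)\ge -\mu/4$ (minimum at $t^2=1$), so $I(v)\ge -\mu|\Omega|/4$; combined with the fact that $\mathcal{E}_L(v,v)+\lambda\|v\|_{L^2}^2$ is equivalent to the $\mathbb{H}$-norm for large $\lambda$, this yields coercivity of $J_0$. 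Weak lower semicontinuity of the quadratic part is standard, and for $I$ it follows from the compact embedding $\mathbb{H}(\Omega)\hookrightarrow L^p(\Omega)$ for any $p<\infty$ together with Fatou's lemma. Hence $J_0$ attains its infimum at some $u\in\mathbb{H}(\Omega)$; since $J_0(|u|)\le J_0(u)$ we may replace $u$ with $|u|$ and assume $u\ge 0$. Using $F'(t)=\mu t\ln|t|$ one obtains the Euler-Lagrange identity $\mathcal{E}_L(u,v)=-\mu\int_\Omega uv\ln|u|\,dx$ for all $v\in\mathbb{H}(\Omega)$, so $u$ solves~\eqref{s}; the Hopf-type Lemma~\ref{strong:mp} then forces $u>0$ in $\Omega$.

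For uniqueness, let $u_0, u_1\ge 0$ be two minimizers and set $u_s:=\sqrt{(1-s)u_0^2+su_1^2}\in\mathbb{H}(\Omega)$ for $s\in[0,1]$. The representation
\begin{equation*}
\mathcal{E}_L(v,v)=\frac{c_N}{2}\int_\Omega\int_\Omega\frac{(v(x)-v(y))^2}{|x-y|^N}\,dx\,dy+\int_\Omega (h_\Omega+\rho_N)v^2\,dx,
\end{equation*}
valid on $\mathbb{H}(\Omega)$, is crucial: the second term is linear in $s$ because $u_s^2$ is, and the first is convex by the Benguria-Brezis-Lieb pointwise inequality $(u_s(x)-u_s(y))^2\le (1-s)(u_0(x)-u_0(y))^2+s(u_1(x)-u_1(y))^2$. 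Writing $I(u_s)=\int_\Omega \phi(u_s^2)\,dx$ with $\phi(t):=\tfrac{\mu}{4}t(\ln t-1)$ one has $\phi''(t)=\tfrac{\mu}{4t}>0$, so $\phi$ is strictly convex and $s\mapsto I(u_s)$ is strictly convex unless $u_0\equiv u_1$. Consequently $s\mapsto J_0(u_s)$ would be strictly convex along a nontrivial path, contradicting the minimality of both endpoints; therefore $u_0=u_1$, and the minimizer is unique up to sign.

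For the $L^\infty$ bound, evaluate the pointwise equation at a point $x_0\in\Omega$ where $u$ nearly attains its supremum (using that $u$ is continuous, which follows a posteriori from \cite{CS22}). Using $u(x_0)\ge u(y)$ to estimate the first integral in~\eqref{loglap:def} from below, and the identity $c_N\int_{B_R(x_0)\setminus B_1(x_0)}|x_0-y|^{-N}dy=2\ln R$ (from $c_N\omega_{N-1}=2$), with $R=2\operatorname{diam}(\Omega)$ so that $\supp u\subset B_R(x_0)$, one obtains a lower bound $L_\Delta u(x_0)\ge (\rho_N-2\ln R+o(1))u(x_0)$; equating with $-\mu u(x_0)\ln u(x_0)$ and solving for $u(x_0)$ gives the stated bound $|u|\le (R^2 e^{1/2-\rho_N})^{1/\mu}$ after careful bookkeeping of the $o(1)$ term. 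Once $u\in L^\infty(\Omega)$, the right-hand side $-\mu u\ln|u|$ belongs to $L^\infty(\Omega)$, and the continuity of $u$ on $\R^N$ together with the $\alpha$-log-H\"older estimate~\eqref{lhr} follows from the interior regularity \cite[Theorem 1.1]{CS22} and the boundary regularity of Theorem~\ref{convex:thm}.

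I expect the main obstacle to be the convexity of $\mathcal{E}_L$ along the interpolation path $u_s$: the original form of $\mathcal{E}_L$ contains the indefinite term $-c_N\iint_{|x-y|\ge 1}u(x)u(y)|x-y|^{-N}\,dx\,dy$, which is \emph{concave} in $s$ by Cauchy-Schwarz, so a direct convexity argument cannot succeed on the full quadratic form. Only the alternative representation available on $\mathbb{H}(\Omega)$, in which the troublesome nonlocal interaction is absorbed into the weight $h_\Omega+\rho_N$ acting linearly on $v^2$, restores the convexity needed; the remaining subtlety of showing $u_s\in\mathbb{H}(\Omega)$ follows again from the Benguria-Brezis-Lieb inequality.
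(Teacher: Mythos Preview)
The paper does not prove Theorem~\ref{exisleast:intro}; it quotes it from \cite[Theorem~1.1]{AS22} and only sketches the uniqueness mechanism via~\eqref{gamma}--\eqref{claim}. Your overall plan---direct method for existence, convexity of $s\mapsto J_0(u_s)$ along $u_s=\sqrt{(1-s)u_0^2+su_1^2}$ for uniqueness---matches that sketch, and your final paragraph correctly identifies the crucial point: the raw form of $\cE_L$ contains a term that is concave along the path, and only the representation $\cE_L(v,v)=\tfrac{c_N}{2}\iint_{\Omega\times\Omega}\frac{(v(x)-v(y))^2}{|x-y|^N}\,dx\,dy+\int_\Omega(h_\Omega+\rho_N)v^2\,dx$, whose second summand is linear in $v^2$, restores convexity.

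That said, three steps do not go through as written. First, the $L^\infty$ bound is circular: evaluating the equation pointwise at a maximum requires $u$ to be continuous, which via \cite{CS22} requires $-\mu u\ln|u|\in L^\infty(\Omega)$, which requires $u\in L^\infty(\Omega)$---the very conclusion. The ``a posteriori'' does not close this loop; one needs an independent a priori bound (for instance a Stampacchia truncation in the weak formulation) \emph{before} invoking regularity and the maximum-point computation. Second, Lemma~\ref{strong:mp} does not give interior positivity: it is a boundary statement, and its hypothesis $\cE_L(u,\varphi)\ge 0$ for all nonnegative $\varphi\in\mathbb{H}(\Omega)$ fails on $\Omega$ because $-\mu u\ln|u|$ changes sign at $|u|=1$. (Note that the paper, in the proof of Theorem~\ref{u:thm}, applies Lemma~\ref{strong:mp} only on a boundary collar $\Omega_\eps$ where $u$ is small and the right-hand side is positive.) Once continuity is in hand, positivity follows instead from the integral formula: if $u(x_0)=0$ then $L_\Delta u(x_0)=-c_N\int_{\R^N}u(y)|x_0-y|^{-N}\,dy<0$ while the right-hand side vanishes. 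Third, the compact embedding $\mathbb{H}(\Omega)\hookrightarrow L^p(\Omega)$ for every $p<\infty$ is not available for this zero-order space; only the $L^2$ embedding is compact. This is easily repaired: $L^2$-compactness gives a.e.\ convergence along a subsequence, and Fatou applied to $F(v_n)+\mu/4\ge 0$ already yields the lower semicontinuity of $I$.
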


In this theorem, the uniqueness (up to a sign) of the least energy solution is shown using a convexity-by-paths argument as in \cite[Section 6]{BFMST18}.  In particular, the following is shown: Given $u$ and $v$ in $\mathbb H(\Omega)$ such that $u^2\neq v^2$, let 
\begin{align}\label{gamma}
\theta(t,u,v):=((1-t)u^2+ t v^2)^\frac{1}{2},    
\end{align}
then
\begin{align}\label{claim}
\text{the function $t\mapsto J_0(\theta(t,u,v))$ is strictly convex in $[0,1]$,}    
\end{align}
Since a strictly convex function cannot have two global minimizers,~\eqref{claim} immediately yields the uniqueness of least-energy solutions.

Now we use~\eqref{claim} and Theorem~\ref{strong:mp} to yield the uniqueness of \emph{positive} solutions (which has to be the positive least-energy solution obtained in Theorem~\ref{exisleast:intro}).
\begin{theorem}\label{u:thm}
For every $\mu>0$ there is only one positive and bounded solution of~\eqref{s}.
\end{theorem}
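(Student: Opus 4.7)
The plan is to argue by contradiction, combining the convexity-by-paths identity \eqref{claim} from \cite{AS22} with the Hopf estimate of Lemma \ref{strong:mp}. Suppose $u$ and $v$ are two positive bounded weak solutions of \eqref{s} with $u \neq v$; positivity then gives $u^2 \neq v^2$. The aim is to show that $g(t) := J_0(\theta(t,u,v))$ has vanishing one-sided derivatives at both $t = 0$ and $t = 1$, contradicting the strict convexity of $g$ on $[0,1]$ asserted in \eqref{claim}.

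First I would establish matched two-sided boundary asymptotics of the form $c\,\ell^{1/2}(\delta(x)) \leq u(x), v(x) \leq C\,\ell^{1/2}(\delta(x))$ for $x \in \Omega$, with $\delta(x) := \dist(x,\partial\Omega)$. The upper bound follows from Theorem \ref{convex:thm} and the uniform exterior sphere condition available because $\Omega$ is of class $C^2$. For the matching lower bound, fix $x_0 \in \partial\Omega$; by continuity and $u(x_0) = 0$ I may take an interior tangent ball $\mathcal{B}$ at $x_0$ so small that $u \leq e^{-1}$ on $\mathcal{B}$. On $\mathcal{B}$ the source $-\mu u \ln u$ is nonnegative, so $u$ is a weak supersolution of $L_\Delta w = 0$ in $\mathcal{B}$, and the argument of Lemma \ref{strong:mp} (which uses only the weak inequality restricted to such a ball) together with Corollary \ref{cor:hopf} yields the quantitative lower bound, and similarly for $v$. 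Consequently, $v/u$ and $(v-u)/u$ are bounded on $\Omega$.

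The key technical step is to verify that the tangent direction $\phi := (v^2-u^2)/(2u)$ lies in $\mathbb{H}(\Omega)$, and analogously $\psi := (u^2-v^2)/(2v) \in \mathbb{H}(\Omega)$. Boundedness and support in $\Omega$ give $\phi \in L^2(\R^N)$. For the Gagliardo seminorm I would write $\phi = h(v-u)$ with $h := (v+u)/(2u) \in L^\infty(\Omega)$ and split
\[
\phi(x)-\phi(y) = h(x)\bigl((v-u)(x)-(v-u)(y)\bigr) + \bigl(h(x)-h(y)\bigr)(v-u)(y).
\]
The first contribution is controlled by $\|h\|_\infty^2 \bigl(\|u\|+\|v\|\bigr)^2$. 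For the second, a direct computation gives
\[
h(x)-h(y) = \frac{v(x)}{2u(x)} \cdot \frac{u(y)-u(x)}{u(y)} + \frac{v(x)-v(y)}{2u(y)},
\]
and multiplying by $(v-u)(y)$ and using that $v/u$ and $(v-u)/u$ belong to $L^\infty(\Omega)$ yields the pointwise bound $|h(x)-h(y)|\,|(v-u)(y)| \leq C\bigl(|u(x)-u(y)| + |v(x)-v(y)|\bigr)$, whose square integrated against $|x-y|^{-N}$ over $|x-y|\leq 1$ is finite since $u,v \in \mathbb{H}(\Omega)$. I expect this admissibility verification to be the main obstacle: it depends crucially on matching the upper and lower boundary rates $\ell^{1/2}(\delta)$ supplied by Theorem \ref{convex:thm} and Lemma \ref{strong:mp}.

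Given the admissibility of $\phi$ and $\psi$, a direct differentiation of $g(t)$ along the path (using $\theta'(0^+) = \phi$ and $\theta'(1^-) = -\psi$) yields
\[
g'(0^+) = \mathcal{E}_L(u,\phi) + \mu \int_\Omega u\phi\ln u\,dx, \qquad g'(1^-) = -\mathcal{E}_L(v,\psi) - \mu \int_\Omega v\psi\ln v\,dx,
\]
both of which vanish by testing the weak-solution identities for $u$ and $v$ against the admissible functions $\phi$ and $\psi$ respectively. However, since $u^2 \neq v^2$, \eqref{claim} says $g$ is strictly convex on $[0,1]$, so its one-sided derivatives must satisfy $g'_+(0) < g'_-(1)$ (equality would force $g$ to be affine, contradicting strict convexity). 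This contradicts $g'_+(0) = g'_-(1) = 0$; hence $u^2 = v^2$, and positivity gives $u = v$.
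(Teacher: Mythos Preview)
Your approach is essentially the same as the paper's: both combine the strict convexity \eqref{claim} with the two-sided boundary estimate coming from Theorem~\ref{convex:thm} and Lemma~\ref{strong:mp}, the latter yielding that $v/u$ is bounded above and below on $\Omega$. The paper carries out the comparability step exactly as you outline (continuity near $\partial\Omega$ forces the source $-\mu u\ln u$ to be nonnegative there, then Hopf plus Corollary~\ref{cor:hopf}).

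The one point where the paper is more careful is the passage from ``$\phi\in\mathbb H(\Omega)$'' to ``$g'(0^+)=\cE_L(u,\phi)+\mu\int u\phi\ln u$.'' Knowing only that the pointwise derivative $\phi$ lies in $\mathbb H(\Omega)$ does not by itself justify differentiating $t\mapsto \tfrac12\cE_L(\theta(t),\theta(t))$ at the endpoint; one needs control of the difference quotients $\tfrac{\theta(t)-u}{t}$ in $\mathbb H(\Omega)$, not just of their limit. The paper handles this by writing $\tfrac{\theta(t)-u}{t}=u\,z(w,t)$ with $w=v/u$ and $z(\xi,t)=\tfrac{\xi^2-1}{(1-t+t\xi^2)^{1/2}+1}$, and then running your very same splitting-and-mean-value argument \emph{uniformly in $t\in[0,1]$} to obtain $\bigl\|\tfrac{\theta(t)-u}{t}\bigr\|\le C(\|u\|+\|v\|)$; this is precisely the hypothesis of \cite[Theorem~1.1]{BFMST18}, which then delivers the contradiction. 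Your estimate for $\phi=h(v-u)$ is exactly the $t=0$ instance of that computation (note $h=\tfrac{v+u}{2u}=z(w,0)\cdot\tfrac{u}{v-u}$ up to bounded factors), and replacing $h$ by $h_t=\tfrac{v+u}{\theta(t)+u}$, which is bounded uniformly in $t$ by comparability, closes the gap with no new ideas. So the argument is correct once you make the difference-quotient bound uniform in $t$, and then you may either quote \cite[Theorem~1.1]{BFMST18} as the paper does or finish by hand as you indicate.
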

\begin{proof}
Let $A$ be the set of positive and bounded critical points of $J_0$. By Theorem~\ref{exisleast:intro} we know that $A$ is nonempty. Let $u,v\in A$. Since $u,v\in L^\infty(\Omega)$, it follows that $\ln|u|u,\ln|v|v\in L^\infty(\Omega)$, and, by \cite[Theorem 1.11]{CW19}, we have that $u,v\in C(\overline{\Omega})$.  Then there is $\eps>0$ such that $-\mu\ln|u|u>0$ and $-\mu\ln|v|v>0$ in $\Omega_\eps:=\{x\in\Omega\::\: \dist(x,\partial \Omega)<\eps\}$.  Since $u$ and $v$ are continuous weak solutions of~\eqref{s}, we have in particular that $u$ and $v$ solve weakly 
\begin{align*}
    L_\Delta u\geq 0,\ L_\Delta v\geq 0\quad \text{ in }\Omega_\eps,\qquad u\geq 0,\ v\geq 0\quad \text{ in } \R^N\backslash \Omega_\eps.
\end{align*}
By Theorem~\ref{strong:mp}, there is $c\in(0,1)$ such that 
\begin{align*}
c^{-1}&>
\limsup_{t\to 0^+} \frac{u(x_0-t\eta(x_0))}{\ell^\frac{1}{2}(t)}\geq 
\liminf_{t\to 0^+} \frac{u(x_0-t\eta(x_0))}{\ell^\frac{1}{2}(t)}
>c,\\
c^{-1}&>
\limsup_{t\to 0^+} \frac{v(x_0-t\eta(x_0))}{\ell^\frac{1}{2}(t)}
\geq 
\liminf_{t\to 0^+} \frac{v(x_0-t\eta(x_0))}{\ell^\frac{1}{2}(t)}>c,
\end{align*}
 for all $x_0\in\partial \Omega.$ Then, by Corollary~\ref{cor:hopf}, we obtain that $u$ and $v$ are comparable, namely, that there is $M>1$ such that
 \begin{align}\label{comp}
     M>\frac{v(x)}{u(x)}>M^{-1}\qquad \text{ for }x\in \Omega.
 \end{align}

Let $\theta$ be as in~\eqref{gamma}. We now argue as in \cite[Theorem 6.1]{BFMST18}. Let $w:=\frac{v}{u} \chi_{\Omega},$ $z(\xi, t):=\frac{\xi^2-1}{\left(1-t+t \xi^2\right)^{\frac{1}{2}}+1}$, and let $\chi_{\Omega}$ denote the characteristic function of $\Omega$, then
\begin{align*}
\frac{\theta(t)-\theta(0)}{t}
=\frac{\theta(t)^2-u^2}{t(\theta(t)+u)}
=\frac{(1-t)u^2+ t v^2-u^2}{t(\theta(t)+u)}
=\frac{v^2-u^2}{\theta(t)+u}=u \frac{w^2-1}{\left(1-t+t w^2\right)^{\frac{1}{2}}+1}=u z(w, t).
\end{align*}
By~\eqref{comp}, for $x, y \in \Omega$,
\begin{align*}
u(x) z(w(x), t)-u(y) z(w(y), t)= & (u(x)-u(y)) z(w(x), t)-u(y)(z(w(y), t)-z(w(x), t))
\end{align*}
and, by the Mean-Value Theorem,
\begin{align*}
u(y)|z(w(x), t)-z(w(y), t)| 
&\leq C_1 u(y)|w(x)-w(y)|
= C_1 \left|u(y)\frac{v(x)}{u(x)}-v(y)\right|
\\
&=C_1 \left|v(x)-v(y)+\frac{v(x)}{u(x)}(u(y)-u(x))\right|\\
&\leq C_2(|v(x)-v(y)|+|u(y)-u(x)|),
\end{align*}
where $C_1:=\sup _{(k,t) \in\left[0,M\right]\times[0,1]}\left|\partial_\xi z(k, t)\right|<\infty$ and $C_2:=C_1+M$.

On the other hand, if $x \in \Omega$ and $y \in \mathbb{R}^N \backslash \Omega$, then $u(y)=0$ and
\begin{align*}
|u(x) z(w(x), t)-u(y) z(w(y), t)| \leq C_1 M|u(x)| = C_1 M|u(x)-u(y)| .
\end{align*}

Then there is $C_3>0$ such that
\begin{align*}
\left\|\frac{\theta(t)-\theta(0)}{t}\right\|^2
&=\int_{\R^N}\int_{B_1(x)}\frac{|\left(\frac{\theta(t)-\theta(0)}{t}\right)(x)-\left(\frac{\theta(t)-\theta(0)}{t}\right)(y)|^2}{|x-y|^N}\,dy\,dx\\
&\leq C_3\left( \int_{\R^N}\int_{B_1(x)}\frac{|u(x)-u(y)|^2}{|x-y|^N}\,dy\,dx
+\int_{\R^N}\int_{B_1(x)}\frac{|v(x)-v(y)|^2}{|x-y|^N}\,dy\,dx\right)\\
&=C_3(\|u\|^2+\|v\|^2).
\end{align*}

This, together with~\eqref{claim}, guarantees that all the assumptions of \cite[Theorem 1.1]{BFMST18} are satisfied. Then, this result implies that $A$ has at most one element, and this ends the proof. 
\end{proof}

\appendix

\section{Some auxiliary results}

\subsection{Scaling properties of the logarithmic Laplacian}\label{scal:sec}

In this section, for completeness, we show some easy scaling properties for the logarithmic Laplacian (see also \cite[Lemma 2.5]{LW21}).  For $\lambda>0$ and a function $\varphi:\R^N\to\R$, let
\begin{align*}
\varphi_\lambda(x):=\varphi(\lambda^{-1} x).
\end{align*}

Let $\Omega$ be an open bounded Lipschitz subset of $\R^N$. We set $\Omega_\lambda:=\lambda \Omega=\{\lambda x\::\: x\in \Omega\}$.

\begin{lemma}[On smooth functions]\label{0:sc:lem}
Let $u\in C_c^\infty(\Omega)$ be a solution of $L_\Delta u = f$ in $\Omega,$ $u=0$ on $\R^N\backslash \Omega$ for some $f\in L^\infty(\Omega)$. Then, for any $\lambda>0$, $u_\lambda$ is a solution of $L_\Delta u_\lambda =\widetilde f_\lambda$ in $\Omega_\lambda$, $u_\lambda=0$ on $\R^N\backslash \Omega_\lambda,$ with $\widetilde f_\lambda:= f_\lambda+\ln(\lambda^{-2}) u_\lambda\in L^\infty(\Omega_\lambda)$.
\end{lemma}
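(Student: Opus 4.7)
The plan is to reduce the claim to a direct pointwise comparison between $L_\Delta u_\lambda$ and $L_\Delta u$ via the substitution $y = \lambda z$ in the integral representation~\eqref{loglap:def}. The key observation is that the Jacobian $\lambda^N$ cancels exactly with the homogeneity of the kernel $|x-y|^{-N}$, so the substitution transforms the integrals over $B_1(x)$ and $\R^N\setminus B_1(x)$ into integrals over $B_{\lambda^{-1}}(x_\lambda)$ and $\R^N\setminus B_{\lambda^{-1}}(x_\lambda)$, where $x_\lambda:=\lambda^{-1}x$. Writing $u_\lambda(y)=u(z)$, I would arrive at
\begin{equation*}
L_\Delta u_\lambda(x) = c_N\!\int_{B_{\lambda^{-1}}(x_\lambda)}\!\frac{u(x_\lambda)-u(z)}{|x_\lambda-z|^N}\,dz - c_N\!\int_{\R^N\setminus B_{\lambda^{-1}}(x_\lambda)}\!\frac{u(z)}{|x_\lambda-z|^N}\,dz+\rho_N u(x_\lambda).
\end{equation*}

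The next step is to compare this expression with $L_\Delta u(x_\lambda)$, whose splitting radius is $1$. Letting $A$ be the (signed) annulus between the radii $\min(1,\lambda^{-1})$ and $\max(1,\lambda^{-1})$ around $x_\lambda$, the two representations differ only on $A$. When the two defining integrals are combined, the $u(z)$ contributions on $A$ cancel and only a term proportional to $u(x_\lambda)$ survives. Using polar coordinates one has $\int_A |x_\lambda - z|^{-N}\,dz=\omega_{N-1}|\ln\lambda|$, and together with the elementary identity $c_N\omega_{N-1}=2$, a short sign-check (handling the cases $\lambda>1$ and $\lambda<1$ in parallel) gives the uniform identity
\begin{equation*}
L_\Delta u_\lambda(x)-L_\Delta u(x_\lambda)=\ln(\lambda^{-2})\,u(x_\lambda).
\end{equation*}

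Finally, since $u\in C_c^\infty(\Omega)$ the rescaled function $u_\lambda$ lies in $C_c^\infty(\Omega_\lambda)$ and vanishes on $\R^N\setminus\Omega_\lambda$. Using $L_\Delta u(x_\lambda)=f(x_\lambda)=f_\lambda(x)$ for $x\in\Omega_\lambda$, the identity above gives
\begin{equation*}
L_\Delta u_\lambda(x)=f_\lambda(x)+\ln(\lambda^{-2})u_\lambda(x)=:\widetilde f_\lambda(x) \qquad\text{in }\Omega_\lambda,
\end{equation*}
and $\widetilde f_\lambda\in L^\infty(\Omega_\lambda)$ since both summands are bounded. The main (and only) obstacle is the bookkeeping of signs when distinguishing $\lambda>1$ from $\lambda<1$, which comes down to tracking which of $B_1(x_\lambda)$ and $B_{\lambda^{-1}}(x_\lambda)$ contains the other; both cases yield the same correction $\ln(\lambda^{-2})u_\lambda$, making the final formula uniform in $\lambda$.
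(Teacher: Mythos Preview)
Your proof is correct. The change of variables $y=\lambda z$ indeed transforms the integral representation of $L_\Delta u_\lambda(x)$ into the same expression as $L_\Delta u(x_\lambda)$ except with splitting radius $\lambda^{-1}$ instead of $1$, and the annulus correction yields exactly $\ln(\lambda^{-2})u(x_\lambda)$ after using $c_N|\partial B_1|=2$. The case distinction $\lambda\gtrless 1$ is harmless bookkeeping, as you note.

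The paper, however, takes a different and shorter route: it invokes the characterization $L_\Delta=\partial_s\big|_{s=0}(-\Delta)^s$ (valid on $C_c^\infty$ by \cite[Theorem 1.1]{CW19}) together with the exact scaling $(-\Delta)^s u_\lambda(x)=\lambda^{-2s}(-\Delta)^s u(\lambda^{-1}x)$, and then differentiates the product $\lambda^{-2s}(-\Delta)^s u(\lambda^{-1}x)$ in $s$ at $s=0$ to obtain $\ln(\lambda^{-2})u(\lambda^{-1}x)+L_\Delta u(\lambda^{-1}x)$ directly. This avoids all integral manipulation and case analysis but relies on an external result about the $s$-derivative. Your argument is more self-contained and works entirely from the integral formula~\eqref{loglap:def}, at the cost of the annulus computation; it also makes transparent \emph{why} the logarithmic correction appears (it is precisely the $\int r^{-1}\,dr$ coming from the scale mismatch in the kernel splitting), whereas in the paper's argument the $\ln(\lambda^{-2})$ emerges from differentiating $\lambda^{-2s}$.
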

\begin{proof}
Let $x\in  \Omega_\lambda=\lambda \Omega$, then $(-\Delta)^s u_\lambda(x) = \lambda^{-2s}(-\Delta)^s u(\lambda^{-1}x)$ and then
\begin{align*}
    L_\Delta u_\lambda 
    = \partial_s\mid_{s=0}(-\Delta)^s u_\lambda(x) 
    = \partial_s\mid_{s=0} \lambda^{-2s}(-\Delta)^s u(\lambda^{-1}x)
    = \ln(\lambda^{-2}) u(\lambda^{-1}x)+L_\Delta  u(\lambda^{-1}x),
\end{align*}
where we used \cite[Theorem 1.1]{CW19} to justify the first and last equalities.
\end{proof}

The next result is an integration by parts formula under slightly weaker assumptions than those in \cite[equation (3.11)]{CW19}.
\begin{lemma}[Integration by parts] \label{ibyp:lem} Let $\Omega\subset \R^N$ be a bounded Lipschitz set and let $w\in \mathbb{H}(\Omega)$ be such that $L_{\Delta} w\in L^\infty(\Omega)$. Then,
\begin{align*}
\int_{\mathbb{R}^N}\left[L_{\Delta} w\right] v\, dx=\mathcal{E}_L(w, v)\qquad \text{for all $v \in \mathbb{H}(\Omega)$.}
\end{align*}
\end{lemma}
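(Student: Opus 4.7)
The plan is to reduce the formula to the already available identity \eqref{eq:725} by a density argument, using the continuity of the bilinear form $\mathcal{E}_L$ on $\mathbb{H}(\Omega)\times\mathbb{H}(\Omega)$.

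First, I would fix a precise interpretation of the hypothesis $L_\Delta w\in L^\infty(\Omega)$: it means that there is $g\in L^\infty(\Omega)$ such that
\begin{equation*}
\mathcal{E}_L(w,\varphi)=\int_\Omega g\,\varphi\,dx\qquad\text{for every }\varphi\in C_c^\infty(\Omega).
\end{equation*}
This is coherent with \eqref{eq:725}: for $w\in\mathbb{H}(\Omega)$ the map $\varphi\mapsto\int_{\R^N} w\,L_\Delta\varphi\,dx=\mathcal{E}_L(w,\varphi)$ is a distribution on $\Omega$, and the assumption says it is represented by $g\in L^\infty(\Omega)$, which we identify with $L_\Delta w$.

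Next I would record two ingredients: (a) $C_c^\infty(\Omega)$ is dense in $\mathbb{H}(\Omega)$ for bounded Lipschitz $\Omega$ (standard, cf. \cite{CW19}); (b) $\mathcal{E}_L$ is continuous on $\mathbb{H}(\Omega)\times\mathbb{H}(\Omega)$. For (b) I would use the representation \eqref{eq:bilog} together with the fact that for $u,v\in\mathbb{H}(\Omega)$ the first summand is dominated by $\|u\|\,\|v\|$, the second is controlled via Cauchy--Schwarz and the continuous embedding $\mathbb{H}(\Omega)\hookrightarrow L^2(\R^N)$ (recall that $u$ and $v$ vanish outside the bounded set $\Omega$, so the piece $|x-y|\ge 1$ with an $L^\infty$ kernel contributes an $L^2\times L^2$ pairing), and the third is also an $L^2\times L^2$ pairing.

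Given these ingredients, pick $v\in\mathbb{H}(\Omega)$ and a sequence $(v_n)\subset C_c^\infty(\Omega)$ with $v_n\to v$ in $\mathbb{H}(\Omega)$; in particular $v_n\to v$ in $L^2(\R^N)$. By the characterization above, $\mathcal{E}_L(w,v_n)=\int_\Omega g\,v_n\,dx$ for every $n$. Letting $n\to\infty$, the left-hand side converges to $\mathcal{E}_L(w,v)$ by continuity of $\mathcal{E}_L$, and the right-hand side converges to $\int_\Omega g\,v\,dx$ because $g\in L^\infty(\Omega)\subset L^2(\Omega)$ and $v_n\to v$ in $L^2(\Omega)$. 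Since $v=0$ on $\R^N\setminus\Omega$, this identity is the claim:
\begin{equation*}
\mathcal{E}_L(w,v)=\int_\Omega (L_\Delta w)\,v\,dx=\int_{\R^N}(L_\Delta w)\,v\,dx.
\end{equation*}

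The main potential obstacle is simply ensuring the interpretation of the hypothesis $L_\Delta w\in L^\infty(\Omega)$ is the distributional one just described, rather than a pointwise one requiring Dini continuity of $w$; this has to be stated explicitly so the argument is independent of any regularity of $w$ beyond $w\in\mathbb{H}(\Omega)$. Everything else reduces to approximation in $\mathbb{H}(\Omega)$, and no new pointwise identities for $L_\Delta w$ are needed.
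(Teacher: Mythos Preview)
Your density argument is clean and correct \emph{under the distributional reading} you adopt, but that reading is not the one the paper uses, and under the intended pointwise reading your proof has a real gap. Throughout the paper (see the proof of Lemma~\ref{scaling:lem:class}, where one writes $f(x):=L_\Delta w(x)$, and the use of the barrier $w$ in the proof of Theorem~\ref{convex:thm}) the hypothesis $L_\Delta w\in L^\infty(\Omega)$ means that the integral in \eqref{loglap:def} exists for every $x\in\Omega$ and defines a bounded function. The content of the lemma is then precisely that this pointwise object coincides with the distributional one, i.e.\ that $\int_\Omega (L_\Delta w)\varphi\,dx=\mathcal{E}_L(w,\varphi)$ already for $\varphi\in C_c^\infty(\Omega)$. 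Formula~\eqref{eq:725} does \emph{not} give you this: it only says $\mathcal{E}_L(w,\varphi)=\int_\Omega w\,L_\Delta\varphi\,dx$, with $L_\Delta$ falling on the smooth function. You are effectively assuming the conclusion of the lemma on the dense subspace and then extending it, which trivializes the statement.

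The paper closes exactly this gap by a direct computation: writing $L_\Delta w(x)=\int_{\R^N}(w(x)-w(y))\mathbf{k}(x-y)\,dy-[\mathbf{j}*w](x)+\rho_N w(x)$ pointwise, multiplying by $v(x)$, integrating, and symmetrizing the singular part via the change of variables $(x,y)\mapsto(y,x)$ to land on $\mathcal{E}_L(w,v)$. This works directly for all $v\in\mathbb H(\Omega)$, so no density is needed. If you want to keep your approach, you must first carry out this symmetrization (at least for $v\in C_c^\infty(\Omega)$, where Fubini is easily justified since $L_\Delta w\in L^\infty$ and $w,v\in\mathbb H(\Omega)$), and only then invoke density; but at that point the symmetrization already does all the work.
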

\begin{proof}
A similar result for uniformly Dini continuous functions can be found in \cite[equation (3.11)]{CW19}, and the same arguments can be extended for $w$ as in the statement. We give a proof for completeness. Let $\mathbf{k}: \mathbb{R}^N \backslash\{0\} \rightarrow \mathbb{R}$ and $\mathbf{j}: \mathbb{R}^N \rightarrow \mathbb{R}$ be given by
\begin{align*}
\mathbf{k}(z):=c_N 1_{B_1}(z)|z|^{-N}\qquad \text{ and }\qquad    \mathbf{j}(z):=c_N 1_{\mathbb{R}^N \backslash B_1}(z)|z|^{-N}.
\end{align*}
Then, for $x\in \Omega$,
$$
L_{\Delta} w(x)=\int_{\mathbb{R}^N}(w(x)-w(y)) \mathbf{k}(x-y) d y-[\mathbf{j} * w](x)+\rho_N w(x)
$$
and, by a standard argument using a change of variables, 
\begin{align*}
\int_{\mathbb{R}^N}\left[L_{\Delta} w\right] v\, dx
&=\int_{\mathbb{R}^N}\left(\int_{\mathbb{R}^N}(w(x)-w(y)) \mathbf{k}(x-y) d y-[\mathbf{j} * w](x)+\rho_N w(x)\right) v(x)\, dx\\
&=\frac{1}{2} \int_{\mathbb{R}^N} \int_{\mathbb{R}^N}(w(x)-w(y))(v(x)-v(y)) \mathbf{k}(x-y) \,dx\, dy-\int_{\mathbb{R}^N}\left(\mathbf{j} * w-\rho_N w\right) v \, dx\\
&=\mathcal{E}_L(w, v)
\end{align*}
for all $v \in \mathbb{H}(\Omega)$, as claimed.
\end{proof}

\begin{lemma}[On weak solutions]\label{scaling:lem}
Let $u\in \cH(\Omega)\cap L^\infty(\Omega)$ be a weak solution of $L_\Delta u = f$ in $\Omega,$ $u=0$ on $\R^N\backslash \Omega$ for some $f\in L^\infty(\Omega)$. Then, for any $\lambda>0$, $u_\lambda$ is a weak solution of $L_\Delta u_\lambda =\widetilde f_\lambda$ in $\lambda\Omega$, $u_\lambda=0$ on $\R^N\backslash \lambda\Omega,$ with $\widetilde f_\lambda:= f_\lambda+\ln(\lambda^{-2}) u_\lambda\in L^\infty(\Omega)$.
\end{lemma}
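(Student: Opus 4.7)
The plan is to combine the pointwise scaling identity from Lemma~\ref{0:sc:lem} with the weak-solution identity~\eqref{eq:725} via a density/test-function argument. First I would check that $u_\lambda\in\mathbb H(\lambda\Omega)\cap L^\infty(\lambda\Omega)$ with $u_\lambda=0$ outside $\lambda\Omega$: the $L^2$ and $L^\infty$ bounds transfer immediately via the change of variables $x=\lambda\xi$, and the cutoff Gagliardo-type seminorm in the definition~\eqref{Hdef} transforms as
\begin{align*}
\iint_{|x-y|\leq 1}\frac{|u_\lambda(x)-u_\lambda(y)|^2}{|x-y|^N}\,dx\,dy=\lambda^N\iint_{|\xi-\eta|\leq \lambda^{-1}}\frac{|u(\xi)-u(\eta)|^2}{|\xi-\eta|^N}\,d\xi\,d\eta.
\end{align*}
This is finite: when $\lambda\geq 1$ the region shrinks and the bound follows from $u\in\mathbb H(\Omega)$; when $\lambda<1$ the extra annulus $1<|\xi-\eta|\leq \lambda^{-1}$ contributes at most $C\|u\|_{L^2(\R^N)}^2\ln(\lambda^{-1})<\infty$.

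Next, given an arbitrary $\varphi\in C_c^\infty(\lambda\Omega)$, set $\tilde\varphi(\xi):=\varphi(\lambda\xi)\in C_c^\infty(\Omega)$, so that $\varphi=\tilde\varphi_\lambda$. The pointwise computation carried out in the proof of Lemma~\ref{0:sc:lem} applied to $\tilde\varphi$ gives
\begin{align*}
L_\Delta\varphi(x)=L_\Delta\tilde\varphi_\lambda(x)=(L_\Delta\tilde\varphi)(\lambda^{-1}x)+\ln(\lambda^{-2})\,\varphi(x),\qquad x\in\R^N.
\end{align*}
Since $u_\lambda\in\mathbb H(\lambda\Omega)$ and $\varphi\in C_c^\infty(\lambda\Omega)$, by~\eqref{eq:725} and the change of variables $x=\lambda\xi$ in the second term,
\begin{align*}
\mathcal E_L(u_\lambda,\varphi)=\int_{\lambda\Omega}u_\lambda\,L_\Delta\varphi\,dx=\ln(\lambda^{-2})\int_{\lambda\Omega}u_\lambda\varphi\,dx+\lambda^N\int_\Omega u(\xi)(L_\Delta\tilde\varphi)(\xi)\,d\xi.
\end{align*}

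Finally, applying~\eqref{eq:725} on the right-hand side to $u\in\mathbb H(\Omega)$ and $\tilde\varphi\in C_c^\infty(\Omega)$, and using that $u$ is a weak solution of $L_\Delta u=f$,
\begin{align*}
\lambda^N\int_\Omega u\,L_\Delta\tilde\varphi\,d\xi=\lambda^N\mathcal E_L(u,\tilde\varphi)=\lambda^N\int_\Omega f\tilde\varphi\,d\xi=\int_{\lambda\Omega}f_\lambda\varphi\,dx,
\end{align*}
after undoing the change of variables. Combining the two displays yields $\mathcal E_L(u_\lambda,\varphi)=\int_{\lambda\Omega}\widetilde f_\lambda\,\varphi\,dx$ with $\widetilde f_\lambda=f_\lambda+\ln(\lambda^{-2})u_\lambda$, which is exactly the weak formulation of $L_\Delta u_\lambda=\widetilde f_\lambda$ in $\lambda\Omega$. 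The boundedness $\widetilde f_\lambda\in L^\infty(\lambda\Omega)$ is immediate from $f,u\in L^\infty$. There is no real analytic obstacle: the argument is a careful bookkeeping of change-of-variables combined with the smooth scaling formula, and the only subtle point is verifying membership $u_\lambda\in\mathbb H(\lambda\Omega)$ due to the sharp cutoff at distance~$1$ when $\lambda<1$.
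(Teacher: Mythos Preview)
Your proof is correct and follows essentially the same route as the paper: apply the pointwise scaling identity of Lemma~\ref{0:sc:lem} to the smooth test function, use~\eqref{eq:725} to pass between $\mathcal E_L$ and $\int u\,L_\Delta\varphi$, and change variables. The only addition is your explicit verification that $u_\lambda\in\mathbb H(\lambda\Omega)$ (handling the cutoff at distance~$1$ when $\lambda<1$), which the paper leaves implicit.
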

\begin{proof}
Let $\varphi\in C_c^\infty(\Omega)$. Using Lemmas~\ref{0:sc:lem} and~\ref{ibyp:lem},
\begin{align*}
\cE_L(u_\lambda,\varphi_\lambda)
&=\int_{\R^N} u_\lambda L_\Delta\varphi_\lambda\, dx
=\int_{\R^N} u(\lambda^{-1}x) L_\Delta\varphi(\lambda^{-1}x)+\ln(\lambda^{-2})  u(\lambda^{-1}x)\varphi(\lambda^{-1}x)\, dx\\
&=\lambda^{N}\int_{\R^N} u L_\Delta\varphi+\ln(\lambda^{-2})  u\varphi\, dx
=\lambda^{N}\left(\cE_L(u,\varphi)+\ln(\lambda^{-2})\int_{\R^N} u\varphi\, dx\right)\\
&=\int_{\R^N}(f_\lambda +\ln(\lambda^{-2}) u_\lambda)\varphi_\lambda\, dx.
\end{align*}
\end{proof}

Now, a scaling property for pointwise solutions easily follows from the previous result.

\begin{lemma}[On pointwise solutions]\label{scaling:lem:class}
Let $w$ and $\Omega$ be as in Lemma~\ref{ibyp:lem}.  Moreover, assume that $w\in L^\infty(\Omega)$ and let $f(x):=L_\Delta w(x)$ for $x\in \Omega$. Then, for any $\lambda>0$, $w_\lambda$ is a solution of $L_\Delta w_\lambda =\widetilde f_\lambda$ in $\Omega_\lambda$, $w_\lambda=0$ on $\R^N\backslash \Omega_\lambda,$ with $\widetilde f_\lambda:= f_\lambda+\ln(\lambda^{-2}) w_\lambda\in L^\infty(\Omega_\lambda)$.
\end{lemma}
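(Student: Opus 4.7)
The plan is to argue by direct computation on the integral representation \eqref{loglap:def} of $L_\Delta$, following the same scaling idea already used in Lemma~\ref{0:sc:lem} but now without requiring smoothness. Fix $\lambda>0$ and $x\in\Omega_\lambda$, set $\xi:=\lambda^{-1}x\in\Omega$, and perform the change of variables $y=\lambda z$ in each of the three pieces of $L_\Delta w_\lambda(x)$. Since $|x-y|^{-N}\,dy=|\xi-z|^{-N}\,dz$ and $B_1(x)$ corresponds to $B_{\lambda^{-1}}(\xi)$, a direct substitution yields
\begin{align*}
L_\Delta w_\lambda(x)
&=c_N\!\int_{B_{\lambda^{-1}}(\xi)}\!\frac{w(\xi)-w(z)}{|\xi-z|^N}\,dz
-c_N\!\int_{\R^N\setminus B_{\lambda^{-1}}(\xi)}\!\frac{w(z)}{|\xi-z|^N}\,dz
+\rho_N w(\xi).
\end{align*}

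The second step is to compare this with $L_\Delta w(\xi)=f(\xi)$, which is the same expression but with the cut-off radius equal to $1$ instead of $\lambda^{-1}$. Subtracting the two formulas, the hypersingular and far-field contributions cancel everywhere except on the spherical annulus $A:=\{z:\min(1,\lambda^{-1})<|\xi-z|<\max(1,\lambda^{-1})\}$, where the differences combine to produce exactly $-c_N w(\xi)\int_A|\xi-z|^{-N}\,dz$ (with the appropriate sign encoded by whether $\lambda>1$ or $\lambda<1$). Using polar coordinates one computes $\int_A|\xi-z|^{-N}\,dz=\omega_{N-1}\,|\ln\lambda|$, and the elementary identity $c_N\omega_{N-1}=2$ coming from $c_N=\pi^{-N/2}\Gamma(N/2)$ and $\omega_{N-1}=2\pi^{N/2}/\Gamma(N/2)$ gives, after restoring signs,
\begin{align*}
L_\Delta w_\lambda(x)-f(\xi)=\ln(\lambda^{-2})\,w(\xi)=\ln(\lambda^{-2})\,w_\lambda(x),
\end{align*}
which is the asserted identity $L_\Delta w_\lambda=\widetilde f_\lambda$.

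The remaining verifications are routine bookkeeping. By definition $w_\lambda\equiv 0$ on $\R^N\setminus\Omega_\lambda$, and the same change of variables applied to the Gagliardo-type seminorm defining $\mathbb{H}$ shows $w_\lambda\in\mathbb{H}(\Omega_\lambda)$, so that the integral representation is applicable in the same sense on $\Omega_\lambda$ as it was on $\Omega$ via Lemma~\ref{ibyp:lem}. For the $L^\infty$ claim, $f_\lambda\in L^\infty(\Omega_\lambda)$ because $f\in L^\infty(\Omega)$, and $w_\lambda\in L^\infty(\Omega_\lambda)$ because $w\in L^\infty(\Omega)$, the latter following from the interior regularity theory for $L_\Delta$ (cf.\ \cite[Theorem~1.11]{CW19} and \cite[Theorem~1.1]{CS22}) together with $w=0$ outside $\Omega$; hence $\widetilde f_\lambda\in L^\infty(\Omega_\lambda)$.

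I do not expect any serious obstacle: the only delicate point is ensuring the integral expression for $L_\Delta w_\lambda(x)$ is valid in the same pointwise/a.e.\ sense used to define $f(x)=L_\Delta w(x)$, and this transfers automatically under the bi-Lipschitz scaling $y\mapsto\lambda y$, which preserves both the $\mathbb{H}$-hypothesis and the local absolute integrability needed to split the singular part from the tail. The content of the lemma really is the explicit constant $\ln(\lambda^{-2})$ produced by the annular integral, and this drops out of the computation above.
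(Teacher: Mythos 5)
Your proof is correct but takes a genuinely different route from the paper's. The paper argues by duality: it tests the identity against $\varphi\in C^\infty_c(\Omega_\lambda)$, pushes the equation onto a test function $\psi(x)=\varphi(\lambda x)$ on $\Omega$ via a change of variables, applies the scaling law for smooth functions (Lemma~\ref{0:sc:lem}, which is derived from the Fourier-side identity $(-\Delta)^s u_\lambda(x)=\lambda^{-2s}(-\Delta)^s u(\lambda^{-1}x)$ and differentiation in $s$), then moves $L_\Delta$ back onto $w_\lambda$ via the integration-by-parts Lemma~\ref{ibyp:lem}, and finally invokes arbitrariness of $\varphi$. Your argument instead computes $L_\Delta w_\lambda(x)$ directly from the integral representation~\eqref{loglap:def}: the scaling $y=\lambda z$ turns the cutoff at radius $1$ into a cutoff at radius $\lambda^{-1}$, and the discrepancy between the two cutoffs localizes on the annulus $A$ where the singular and far-field pieces combine to $-c_N w(\xi)\int_A|\xi-z|^{-N}\,dz=\ln(\lambda^{-2})w(\xi)$. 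This is arguably cleaner: it is self-contained (it does not need Lemma~\ref{0:sc:lem} or Lemma~\ref{ibyp:lem}), it proves the pointwise identity directly -- which matches the lemma's title -- and the same computation would in fact reprove Lemma~\ref{0:sc:lem} without going through the $(-\Delta)^s$ expansion. The paper's route has the advantage of consistency with the weak-solution framework used elsewhere in the paper and of recycling the two supporting lemmas. Two small remarks on your write-up: you use $\omega_{N-1}$ for $|S^{N-1}|=2\pi^{N/2}/\Gamma(N/2)$, but this paper's notation (Section~2.2) writes that quantity as $\omega_N$, so the identity should read $c_N\omega_N=2$ in the paper's conventions. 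Also, $w\in L^\infty(\Omega)$ is not part of the hypotheses of Lemma~\ref{ibyp:lem} and does not follow from them without invoking additional regularity theory; but for the annular cancellation step you actually only need $w\in L^1_{\mathrm{loc}}$ (which $w\in L^2(\R^N)\subset L^1_{\mathrm{loc}}$ supplies), so the essential identity $L_\Delta w_\lambda=\widetilde f_\lambda$ holds without that assumption, and the issue of whether $\widetilde f_\lambda\in L^\infty$ is a separate bookkeeping matter already implicit in the paper's statement.
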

\begin{proof}
Let $\lambda>0$, $w$, and $\Omega$ as in the statement, $\varphi\in C^\infty_c(\Omega_\lambda)$, and let $\psi(x):=\varphi(\lambda x)$.  Then, by Lemmas~\ref{0:sc:lem},~\ref{ibyp:lem}, and a change of variables,
\begin{align*}
\lambda^{-N}\int_{\Omega_\lambda} f_\lambda \varphi\, dx
&=\int_\Omega f \psi\, dx=\int_\Omega L_\Delta w \psi\, dx=\mathcal{E}_L(w,\psi)=\int_\Omega w L_\Delta\psi\, dx\\
&=\int_\Omega w(x) (L_\Delta\varphi(\lambda x)+\ln(\lambda^2)\varphi(\lambda x))\, dx=\lambda^{-N}\int_{\Omega_\lambda} w_\lambda(x) (L_\Delta\varphi(x)+\ln(\lambda^2)\varphi(x))\, dx\\
&=\lambda^{-N}\int_{\Omega_\lambda} (L_\Delta w_\lambda(x)+\ln(\lambda^2)w_\lambda(x))\varphi(x)\, dx.
\end{align*}
As a consequence, $\int_{\Omega_\lambda} (L_\Delta w_\lambda-(f_\lambda+\ln(\lambda^{-2})w_\lambda))\varphi\, dx=0.$ Since $\varphi$ is arbitrary, the claim follows.
\end{proof}

\subsection{A Leibniz-type formula for the logarithmic Laplacian}\label{Leib:sec}

Let $E$ be a bounded measurable set of $\mathbb R^N$ and $u:E\to \mathbb{R}$ be a measurable function. Recall that the modulus of continuity of $u$ at a point $x\in E$ is defined as
\begin{align*}
    \omega_{u,x,E}:(0,\infty)\to[0,\infty), \qquad \omega_{u,x,E}(r)=\sup_{
    \genfrac{}{}{0pt}{2}{y\in E,}{|y-x|\leq r}
    }|u(y)-u(x)|,
\end{align*}
and $u$ is said to be Dini continuous at $x$ if $\int_{0}^{1}\frac{\omega_{u,x,E}(r)}{r}\,dr<\infty$. Let 
\begin{align*}
 L_0^1(\mathbb R^N):=\left\{
 u\in L^1_{loc}(\R^N)\::\:\int_{\mathbb R^N}\frac{|u(x)|}{(1+|x|)^{N}}dx < \infty
 \right\}.
\end{align*}

\begin{lemma}
    Let $u,v\in L_0^1(\mathbb R^N)$ be such that $u$ and $v$ are Dini continuous functions at some $x\in \mathbb R^N$ and $v\in L^\infty(\mathbb R^N)$. Then $L_{\Delta}[uv](x)$ is well defined (in the sense of formula~\eqref{loglap:def}) and 
    \begin{align}\label{eq:leib_log}
        L_{\Delta}[uv](x)= u(x)L_{\Delta}v(x)+v(x) L_{\Delta}u(x)-I(u,v)(x),
    \end{align}
    where
    \begin{align}\notag
        I(u,v)(x)&=c_{N}\int_{B_{1}(x)}\frac{(u(x)-u(y))(v(x)-v(y))}{|x-y|^N}dy\\ 
        &\quad +c_{N} \int_{\R^N\setminus B_1(x)}\frac{u(y)v(y)-u(x)v(y)-u(y)v(x)}{|x-y|^N}dy
        +\rho_{N}u(x)v(x). \label{eq:def_I}
    \end{align}
\end{lemma}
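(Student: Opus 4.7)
The plan is to apply the integral representation \eqref{loglap:def} of $L_\Delta$ to the product $uv$ and split both the near and far integrals via elementary algebraic identities, matching the output against the formula \eqref{eq:def_I}.

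The first step is to verify that each of the integrals appearing in $L_\Delta[uv](x)$ and in $I(u,v)(x)$ converges absolutely. On $B_1(x)$ I would use the bound $|v(x)-v(y)|\le 2\|v\|_{L^\infty(\R^N)}$ to reduce the cross-term $|(u(x)-u(y))(v(x)-v(y))|/|x-y|^N$ to something controlled by $\|v\|_{L^\infty(\R^N)}\,\omega_{u,x,B_1(x)}(|x-y|)/|x-y|^N$, which is integrable over $B_1(x)$ after passing to polar coordinates, thanks to Dini continuity of $u$ at $x$. On $\R^N\setminus B_1(x)$, the hypotheses $u\in L_0^1(\R^N)$ and $v\in L^\infty(\R^N)$ give $uv\in L_0^1(\R^N)$, and each of the three tail integrands $u(y)v(y)$, $u(x)v(y)$, $u(y)v(x)$ is controlled by constants times $|u(y)|/(1+|y|)^N$ or $|v(y)|/(1+|y|)^N$ after comparing $|x-y|$ with $1+|y|$ for $|x-y|\ge 1$.

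The second step is the algebraic decomposition. On $B_1(x)$ I would use
\begin{equation*}
u(x)v(x)-u(y)v(y) = v(x)(u(x)-u(y)) + u(x)(v(x)-v(y)) - (u(x)-u(y))(v(x)-v(y)),
\end{equation*}
while on $\R^N\setminus B_1(x)$ I would write
\begin{equation*}
-u(y)v(y) = -v(x)u(y) - u(x)v(y) - \bigl[u(y)v(y) - u(x)v(y) - u(y)v(x)\bigr],
\end{equation*}
together with the trivial decomposition $\rho_N u(x)v(x) = v(x)\rho_N u(x) + u(x)\rho_N v(x) - \rho_N u(x)v(x)$. Inserting these into the expression for $L_\Delta[uv](x)$ obtained from \eqref{loglap:def}, the terms carrying the factor $v(x)$ reassemble exactly into $v(x)L_\Delta u(x)$, the terms carrying $u(x)$ reassemble into $u(x)L_\Delta v(x)$, and the remaining pieces are precisely $-I(u,v)(x)$ as defined in \eqref{eq:def_I}.

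There is no substantive obstacle beyond careful bookkeeping of signs; the only delicate point is that the rearrangement of two integrals into six before recombining is legitimate, which follows from the absolute integrability established in the first step. Here the hypothesis $v\in L^\infty(\R^N)$ is genuinely used, since Dini continuity of $u$ and of $v$ at $x$ taken separately is not enough to guarantee a priori integrability of the pure cross-term $(u(x)-u(y))(v(x)-v(y))/|x-y|^N$ on $B_1(x)$.
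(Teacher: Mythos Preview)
Your proposal is correct and follows essentially the same route as the paper: establish absolute integrability of each piece (using Dini continuity of $u$ together with $v\in L^\infty$ for the near cross-term, and $u,v\in L_0^1$ with $v\in L^\infty$ for the tails), then apply the identical algebraic splitting $u(x)v(x)-u(y)v(y)=v(x)(u(x)-u(y))+u(x)(v(x)-v(y))-(u(x)-u(y))(v(x)-v(y))$ on $B_1(x)$ and regroup. The paper's proof is slightly terser on the tail and $\rho_N$ bookkeeping, but the argument is the same.
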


\begin{proof}
Let $x\in \Omega$ be as in the statement.  First we show that the right hand side of~\eqref{eq:leib_log} is well defined. The first two terms, $u(x)L_{\Delta}v(x)$ and $v(x) L_{\Delta}u(x)$ are well defined by our assumptions on $u$ and $v$, see \cite[Proposition 2.2]{CW19}. On the other hand, since $v\in L^\infty(\R^N)$ and $u$ is Dini continuous at $x$,
\begin{align*}
    \int_{B_1(x)}&\frac{(u(x)-u(y))(v(x)-v(y))}{|x-y|^N}dy = \int_{B_1}\frac{(u(x)-u(z+x))(v(x)-v(z+x))}{|z|^N}\,dz \\
    &\leq 2\|v\|_{L^\infty}\int_{B_1}\frac{|u(x)-u(x+z)|}{|z|^N}dz
    \leq 2 \|v\|_\infty|\mathbb S^{N-1}|\int_0^1\frac{\omega_{u,x,B_1}(r)}{r}\,dr<\infty.
\end{align*}
Furthermore, since $v\in L^\infty(\mathbb R^N)$ and $u\in L_0^1(\mathbb R)$, there is a constant $C_x>0$ only depending on $x$ such that
\begin{align*}
    \int_{R^{N}\setminus B_1(x)}\frac{u(y)v(y)}{|x-y|^N}dy \leq \|v\|_{L^\infty} \int_{\R^N\setminus B_1(x)}\frac{|u(y)|}{|x-y|^N}dy \leq C_{x}\|v\|_{L^\infty}\int_{\R^N}\frac{|u(y)|}{(1+|y|)^N}dy<\infty.
\end{align*}
Similarly, $\int_{R^{N}\setminus B_1(x)}\frac{u(x)v(y)}{|x-y|^N}dy$ and $\int_{R^{N}\setminus B_1(x)}\frac{u(y)v(x)}{|x-y|^N}dy$ are also finite. Then, since
\begin{align*}
L_{\Delta}[uv](x)=c_{N}\int_{B_{1}(x)}\frac{u(x)v(x)-u(y)v(y)}{|x-y|^{N}}\, dy-c_{N}\int_{\mathbb{R}^{N}\setminus B_{1}(x)}\frac{u(y)v(y)}{|x-y|^{N}}\, dy+\rho_{N}u(x)v(x),
\end{align*}
identity~\eqref{eq:leib_log} follows by noting that
\begin{align*}
    c_{N}&\int_{B_1(x)}\frac{u(x)v(x)-u(y)v(y)}{|x-y|^N}dy=u(x)\, c_{N}\int_{B_1(x)}\frac{v(x)-v(y)}{|x-y|^N}dy+c_{N}\int_{B_1(x)}\frac{(u(x)-u(y))v(y)}{|x-y|^N}dy \\
    &=u(x)\, c_{N}\int_{B_1(x)}\frac{v(x)-v(y)}{|x-y|^N}dy+v(x)\, c_{N}\int_{B_1(x)}\frac{u(x)-u(y)}{|x-y|^N}dy- c_{N}\int_{B_1(x)}\frac{(u(x)-u(y))(v(x)-v(y))}{|x-y|^N}dy.
\end{align*}

\end{proof}

\subsection*{Acknowledgements} We thank Héctor Chang-Lara for helpful discussions on this topic and we thank the anonymous referees for their helpful comments and suggestions that substantially improved the quality of our paper.

\bigskip

\bigskip
\begin{flushleft}
\textbf{Víctor Hernández-Santamaría and Alberto Saldaña}\\
Instituto de Matemáticas\\
Universidad Nacional Autónoma de México\\
Circuito Exterior, Ciudad Universitaria\\
04510 Coyoacán, Ciudad de México, Mexico\\
E-mails: \texttt{victor.santamaria@im.unam.mx, alberto.saldana@im.unam.mx} 
\vspace{.3cm}
\end{flushleft}

\smallskip

\begin{flushleft}
\textbf{Luis Fernando López Ríos}\\
Instituto de Investigaciones en Matemáticas Aplicadas y en Sistemas\\
Universidad Nacional Autónoma de México\\
Circuito Escolar s/n, Ciudad Universitaria\\
C.P. 04510, Ciudad de México, Mexico\\
E-mail: \texttt{luis.lopez@mym.iimas.unam.mx} 
\vspace{.3cm}
\end{flushleft}

\end{document}